\newtheorem{thm}{Theorem}
\newtheorem{claim}[thm]{Claim}
\newtheorem{prop}[thm]{Proposition}
\newtheorem{lemma}[thm]{Lemma}
\newtheorem{cor}[thm]{Corollary}
\newtheorem{lem}[thm]{Lemma}
\theoremstyle{remark}\newtheorem{rem}[thm]{Remark}
\def\C{\Bbb C}
\def\hol{\mathcal O}
\def\O{\Omega}
\def\k{\kappa}
\def\ov{\overline}
\def\lk{ l^{\kappa}}
\def\Cn{\mathbb{C}^N}
\def\ds{\displaystyle}
\def\re{\operatorname{Re}}
\def\im{\operatorname{Im}}
\begin{document}
	
	\title[Precise estimates of invariant distances]
	{Precise estimates of invariant distances on strongly pseudoconvex domains}
	
	\author{{\L}ukasz Kosi{\'n}ski, Nikolai Nikolov, and Ahmed Yekta {\"O}kten}
	\address{{\L}. Kosi{\'n}ski\\Institute of Mathematics, Faculty of Mathematics
		and Computer Science, Jagiellonian University \\ Lojasiewicza 6, 30-348 \\
		Krakow, Poland}
	\email{lukasz.kosinski@uj.edu.pl}
	\address{N. Nikolov\\
		Institute of Mathematics and Informatics\\
		Bulgarian Academy of Sciences\\
		Acad. G. Bonchev Str., Block 8\\
		1113 Sofia, Bulgaria}
	\address{Faculty of Information Sciences\\
		State University of Library Studies
		and Information Technologies\\
		69A, Shipchenski prohod Str.\\
		1574 Sofia, Bulgaria}
	\email{nik@math.bas.bg}
	\address{A.Y.{\"O}kten\\
		Dipartimento di Matematica \\
		Universit\`a di Roma “Tor Vergata”\\
	Via della
	Ricerca Scientifica 1, 00133 Roma, Italy} \email{okten@mat.uniroma2.it}
	
	\thanks{}
	
	\subjclass[2020]{32F45}
	
	\keywords{strongly pseudoconvex domains, (complex) geodesics, Lempert function, Kobayashi, Carath{\'e}odory and Bergman distances,  Carnot-Carath{\'e}odory metric.}
	
	\maketitle
	
	\begin{center}
		\textit{Dedicated to Prof. Dr. Peter Pflug on the occasion of his 80th birthday}
	\end{center}
	
	\begin{abstract}
		Studying the behavior of real and complex geodesics we provide sharp estimates for the Kobayashi distance, the Lempert function, and the Carath{\'e}odory distance on strongly pseudoconvex domain with $\mathcal{C}^{2,\alpha}$-smooth boundary. Similar estimates are also provided for the Bergman distance on strongly pseudoconvex domains with $\mathcal{C}^{3,1}$-smooth boundary.
	\end{abstract}
	
	\tableofcontents
	\setcounter{tocdepth}{1}
	\addtocontents{toc}{\protect\setcounter{tocdepth}{1}}
	\section{Introduction}

	In the last decades, the metric geometric properties of the Kobayashi distance have been explored and have led to many fruitful applications. This line of inquiry started with the work of Balogh and Bonk \cite{BB} where they proved the Gromov hyperbolicity of the Kobayashi distance on strongly pseudoconvex domains to reobtain the well-known results about extensions of biholomorphic maps between strongly pseudoconvex domains. Their results follow from the key estimate
	$$ k_\O(z,w)=g_\O(z,w)+\mathcal{O}(1),$$ where $k_\O$ is the Kobayashi distance of the strongly pseudoconvex domain $\O$ and $g_\O$ is a function obtained from the Carnot-Carath\'eodory metric on $\partial\O$. The downside of the estimate of Balogh-Bonk is that it is precise only when the distances are large. Nevertheless, that was sufficient for their purposes as  Gromov hyperbolicity is only relevant for geometry in the large.
	
	The goal of this paper is to provide upper and lower bounds for the Kobayashi distance on strongly pseudoconvex domains with $\mathcal{C}^{2,\alpha}$-smooth boundary in terms of simple quantities coming from the Euclidean geometry of the domain. Our estimates remain sensitive even when the Kobayashi distance is small. Precisely, we show that there are universal constants $0<c<C$ (depending only on a domain) such that $$\log\left(1+c{A_\O(z,w)}\right)\leq k_\O(z,w) \leq \log\left(1+CA_\O(z,w)\right),
	$$ that is $$ c A_\O \leq e^{k_\O} - 1 \leq C A_\O,$$ where $$A_\O(z,w)=\dfrac{\|(z-w)_z\|+\|z-w\|^2+\|z-w\|\delta^{1/2}_\O(z)}{ {\delta^{1/2}_\O(z)\delta^{1/2}_\O(w)}}.$$
	Above $\delta_\Omega(x):=\displaystyle{\inf\{\lVert x-y \rVert: y\in\partial\Omega\}}$ is the distance to $\partial \O$, and $v_z$ is the complex normal component of the vector $v$ taken with respect to the closest point to $z$ on $\partial\Omega$, denoted throughout the paper by $\pi_\O(z)$; and $v_z:= \langle v, \eta_{\pi(z)} \rangle$, where $\eta_{p}$ denote the outer unit normal vector to $\partial \O$ at $p$
	and $\langle z,w \rangle := \sum_{j=1}^N z_j \bar w_j$. If such a closest point is not unique, the complex normal may be taken with respect to any of them.
	
	Our estimate encapsulates the one of Balogh and Bonk. Furthermore, due to the earlier comparison results in \cite{NT2}, this estimate extends to the Lempert function and the Carath{\'e}odory distance; and to the Bergman distance as well, provided that higher boundary regularity assumptions are satisfied.
	
	The estimates we obtain for the Kobayashi distance rely on the previous work, namely the estimate \cite[Corollary 8]{NA} for the upper bound, and the estimates \cite[Proposition 1.6, Proposition 1.7]{NT} and \cite[Proposition 2]{KN} for the lower bound. Actually, we shall observe that they remain critical only when the points $z,w\in\O$ tend to a $p\in\partial\O$ "tangentially" (see Section~\ref{transformations} for the definition). In fact, some of our results are a natural continuation of \cite{KN}. In particular, we prove a conjecture formulated there to extend \cite[Proposition 2]{KN} to the "tangential" case.
	
	Notably, we also complement some results of \cite{KN} regarding the Euclidean behavior of real and complex geodesics of strongly pseudoconvex domains. We compare the lengths of real geodesics with their maximal boundary distance and describe Euclidean diameters of complex geodesics in terms of the boundary distance of the points which define them and of the 'angle' the complex line defined by the two points makes with the boundary of the domain. In particular, we improve a result of Huang \cite[Corollary 1.2]{H3} (see also \cite[Proposition 7]{KN} for more precise estimates) about the behavior of extremal maps for the Kobayashi-Royden pseudometric on strongly pseudoconvex domains.
	
	Our approach  works in the $\mathcal{C}^{2,\alpha}$-smooth case, where $\alpha>0$. The reason for this is that some methods rely heavily on the theory developed by Lempert \cite{L1, L2, L} and extended later by Huang \cite{H, H2, H3}.
	
	This paper is organized as follows. In Section~\ref{pre} we recall the basic concepts related to invariant distances and functions. Next, in Section~\ref{sectionresults}, we present our main results, and in Section~\ref{sectionapplications}, we discuss their possible applications. Section~\ref{transformations} contains the main tools that will be used in the paper. We will describe here the key details of our approach.
	The proofs are split into three parts. In Sections~\ref{sectiongeometricproofs} and~\ref{sec:geometricproofs} we present the ones concerning the behavior of real and complex geodesics on strongly pseudoconvex domains. Building on those, in Section~\ref{estimateproofs}, we establish our estimates of invariant distances.
	Finally, in Section~\ref{strictlylinearlyconvexcase}, we study the basic concepts about the Euclidean geometry of strongly pseudoconvex domains and show that our estimates can be simplified in the strictly linearly convex case.

	\section{Basic definitions}\label{pre}
	
	Let $\Omega$ be a domain in $\C^N$, $z,w\in \Omega$ and $v\in\Cn$. Recall that the Carath{\'e}odory distance and the Lempert function on $\O$ are given as $$ c_\O(z,w):=\tanh^{-1} \tilde{c}_\O(z,w) \:\:\:\:\: \text{and} \:\:\:\:\: l_\O(z,w):=\tanh^{-1} \tilde{l}_\O(z,w),$$
	where $\Delta$ is the
	unit disc, $\tilde{c}_\Omega(z,w):=\sup\{|\alpha|:\exists f\in\hol(\O,\Delta)
	\hbox{ with }f(z)=0,f(w)=\alpha\}$ and $\tilde{l}_\Omega(z,w):=\inf\{|\alpha|:\exists\varphi\in\hol(\Delta,\O)
	\hbox{ with }\varphi(0)=z,\varphi(\alpha)=w\} $. In general, the Lempert function does not satisfy the triangle inequality. The Kobayashi pseudodistance $k_\Omega$ is defined as the largest pseudodistance
	which does not exceed the Lempert function. Thus in general $c_\O(z,w)\leq k_\O(z,w) \leq l_\O(z,w)$, where $z,w\in \O$. The celebrated Lempert theorem asserts that all these functions coincide if $\O$ is convex (\cite{L1}).
	
	Those notions have infinitesimal counterparts. For a point $z$ and a vector $v\in \C^N$, the Kobayashi-Royden pseudometric is given by
	$$\k_\O(z;v)=\inf_{\alpha\in\mathbb C}\{|\alpha|:\exists\varphi\in\O(\Delta,\O)\hbox{ with }
	\varphi(0)=z,\alpha\varphi'(0)=v\},$$ and the Kobayashi-Royden length of an absolutely continuous curve $\gamma:I\rightarrow \O$ is defined as $$ \lk_\O(\gamma):=\int_{I} \k_\O(\gamma(t);\gamma'(t))dt. $$
	
	It follows from \cite[Theorem 1.2]{V} that $k_\Omega$ is an integrated form of the Kobayashi-Royden pseudometric, that is
	$ k_\O(z,w)=\inf\lk_\O(\gamma)$, where the infimum is taken over all absolutely continuous curves joining $z$ to $w$.
	
	It is well-known that if a domain $\O$ is strongly pseudoconvex, then it is complete hyperbolic, i.e. $(\O,k_\O)$ forms a complete metric space. If this is the case, it follows from the well-known Hopf-Rinow theorem \cite[Proposition 3.7]{BH} that $(\O,k_\O)$ is a geodesic space, i.e. any $z,w\in\O$, can be joined by a (real) Kobayashi geodesic, i.e. an isometry between an interval and the Kobayashi distance of $\O$.  The holomorphic analogue of real geodesics are complex geodesics, which are holomorphic maps $\varphi:\Delta\rightarrow\O$ that are isometries with respect to the Kobayashi distance.
	
	In general, real Kobayashi geodesics need not exist, but there are substitutes. An absolutely continuous curve $\gamma:I\rightarrow\O$ is said to be a $(\lambda,\epsilon)$-geodesic if for any $t_1\leq t_2\in I$ we have
	$$ \lk_\O(\gamma|_{[t_1,t_2]}) \leq \lambda k_\O(\gamma(t_1),\gamma(t_2)) + \epsilon.$$
	
	It is noted in \cite{NO} that any $(\lambda,\epsilon)$-geodesic can be parametrized as a $(\lambda,\epsilon)$-almost geodesic in the sense of \cite[Definition 4.1]{BZ}. In particular, any $(\lambda,\epsilon)$-geodesic is a quasi-geodesic in the sense of \cite[Definition 4.7]{BZ}.
	
	Finally recall that the Bergman distance $b_\O(z,w)$ is the inner distance obtained by the Bergman metric  $$\beta_\Omega(z;v):=\dfrac{M_\O(z;v)}{K^{1/2}_\O(z)}, \:\:\:\:\: z\in \O, \:\: v \in \Cn,$$ where $$M_\O(z;v)=\sup\{\|df_z v\|: f \in L^2_h(\O), \|f\|_\O\leq 1, f(z)=0\}$$ and $K_\O$ is the Bergman kernel of $\O$ on the diagonal. This metric stems from a Hermitian form.  We refer the reader to \cite[section 12.7]{JP} for details.
	
	\section{Statement of the results}\label{sectionresults}
	Our main result is the following:
	\begin{thm}\label{profnikolovsdream}
		Let $\O$ be a strongly pseudoconvex domain with  $\mathcal{C}^{2,\alpha}$-smooth boundary, $\alpha\in (0,1]$ and let $d_\O$ be either the Carath\'eodory distance, the Kobayashi distance or the Lempert function on $\O$. Then, there exist constants $0<c<C$ (depending only on $\Omega$) such that
		\begin{equation*}\log\left(1+c {A_\O(z,w)}\right) \leq  d_\O(z,w) \leq  \log\left(1+C {A_\O(z,w)}\right) , \:\:\:z,w\in \O.
		\end{equation*}
		If additionally $\O$ has $\mathcal C^{3,1}$-smooth boundary, then the above assertion holds also when $d_\O$ is the Bergman distance on $\O$.
	\end{thm}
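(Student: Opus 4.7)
My plan is to first establish the estimate for the Kobayashi distance and then transfer it to the other invariants. Since $c_\O\le k_\O\le l_\O$ on every domain, it suffices to prove the lower bound for $c_\O$ and the upper bound for $l_\O$; this simultaneously yields the two-sided bound for all three of $c_\O$, $k_\O$, $l_\O$. The Bergman case then follows, under the extra $\mathcal C^{3,1}$ hypothesis, from the comparison results of \cite{NT2}, which let one transfer a $\log(1+\cdot)$-type estimate from $k_\O$ to $b_\O$ up to an adjustment of the multiplicative constants.

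For the upper bound, I would build on \cite[Corollary 8]{NA}. Decomposing $z-w$ into its complex normal and complex tangential parts with respect to $\pi_\O(z)$, I would construct an analytic disc through $z$ and $w$ by concatenating two simpler pieces: an affine disc in the normal slice that absorbs the normal displacement (its Kobayashi length being controlled by $\tanh^{-1}$ of a ratio of normal distances), and a nearly stationary disc that absorbs the tangential displacement, obtained by perturbing a complex geodesic of a local Siegel model. The Lempert--Huang theory \cite{L1,L2,H,H2} makes this construction valid in the $\mathcal C^{2,\alpha}$ regularity class. Computing the Poincar\'e distance of the preimages in $\Delta$ and undoing the $\tanh^{-1}$ then produces a bound of the form $l_\O(z,w)\le \log(1+CA_\O(z,w))$.

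The lower bound is the main obstacle, and I would split it according to which of the three terms of $A_\O$ dominates. When $\|(z-w)_z\|$ dominates, \cite[Proposition 1.6]{NT}, applied via a Fornaess-type peak function centered at $\pi_\O(z)$, separates $z$ from $w$ linearly in the normal direction. When $\|z-w\|\delta_\O^{1/2}(z)$ dominates, I would interpolate between the two extremal regimes. The delicate case is when $\|z-w\|^2$ dominates, i.e.\ $z$ and $w$ are tangentially far apart but both close to $\partial\O$; here I must prove the tangential conjecture formulated in \cite{KN}, extending \cite[Proposition 2]{KN}. My strategy is to pass to a local chart in which $\partial\O$ is given, to second order, by a strictly plurisubharmonic Levi polynomial, construct a holomorphic peak function as a small perturbation of the model peak, and use the $\mathcal C^{2,\alpha}$ estimates to control the error terms so that the peak function remains bounded on $\O$ while still detecting a linear separation in the tangential direction. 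This second-order bookkeeping, needed to make the constants and the additive $1$ inside the logarithm work out, is the main technical hurdle.
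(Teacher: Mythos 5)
Your proposal departs from the paper's route in several places, but one departure is actually a misreading of where the difficulty lies. Look carefully at what the three terms of $A_\O$ come from. The estimate of \cite[Propositions 1.6 and 1.7]{NT}, written here as \eqref{partofthelowerbound}, already yields
$k_\O(z,w)\ge\log\bigl[(1+c\|z-w\|/\delta_\O^{1/2}(z))(1+c\|z-w\|/\delta_\O^{1/2}(w))\bigr]$;
expanding the product gives precisely the contributions
$\|z-w\|^2/(\delta_\O^{1/2}(z)\delta_\O^{1/2}(w))$ and
$\|z-w\|\delta_\O^{1/2}(z)/(\delta_\O^{1/2}(z)\delta_\O^{1/2}(w))$.
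So the $\|z-w\|^2$ term is \emph{not} the delicate one; it is essentially free. What is genuinely new, and what requires the tangential extension of \cite[Proposition 2]{KN}, is the normal term $\|(z-w)_z\|$ — this is exactly Proposition~\ref{normallowerbound}. You have the roles of the two terms reversed: you attribute the normal term to \cite{NT} and reserve the tangential work for $\|z-w\|^2$, which inverts the actual content. This matters, because the tangential regime is exactly the one where $\|(z-w)_z\|/\|z-w\|\to 0$, i.e., where the normal term degenerates relative to $\|z-w\|$, and that is the case \cite[Proposition 2]{KN} did not cover.

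Beyond the bookkeeping error, your method is a genuine alternative, and you should be aware of what you are trading away. The paper obtains the lower bound for $k_\O$ via a geometric machine: complex geodesics through $z,w$ (Lemma~\ref{complexgeodesics}), scaling by automorphisms $A_t$ of the ball (Claim~\ref{claimabouttangentialconvergence}), the Gehring--Hayman estimate of \cite{KNT}, Gromov hyperbolicity and the geodesic stability lemma (proof of Proposition~\ref{quantitativevisibility}), and a careful analysis of how $\eta_{z}$ pairs against $z-w$ after scaling (Claim~\ref{lefttoshow}). Your proposal replaces all of that with a peak-function construction in a local Levi-polynomial chart to bound $c_\O$ from below. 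That is a legitimate strategy in spirit, but "second-order bookkeeping" drastically understates what is required: you must show that the peak function detects the \emph{normal} separation $\|(z-w)_z\|$ in the tangential regime at the sharp multiplicative scale $\delta_\O^{-1/2}(z)\delta_\O^{-1/2}(w)$, uniformly as both points approach the boundary, and there is no indication of how you would control the error terms to that precision. Similarly, the paper's upper bound in the tangential case does not build a disc by concatenation (recall $l_\O$ has no triangle inequality); it places $z,w$ on a single complex geodesic, scales, applies \cite[Corollary 8]{NA} to the scaled domain, and then pulls the estimate back via \eqref{eq:cl} and Theorem~\ref{profnikolovsconjecture}. Your concatenation idea would require a separate gluing argument that is not sketched. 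Finally, proving the lower bound for $c_\O$ and the upper bound for $l_\O$ directly would indeed suffice by monotonicity, and it would bypass the comparison results of \cite{NT,NT2} the paper invokes; but since you ultimately cite \cite[Proposition 1.6]{NT}, which is a Kobayashi estimate, your lower bound is really for $k_\O$, and you would still need the comparison step for $c_\O$.
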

		
	The estimates given in Theorem~\ref{profnikolovsdream} will be proved for the Kobayashi distance first. Then, combined with the comparison results in \cite{NT2}, we will derive the assertions for the Carath{\'e}odory distance as well as the Lempert function, and also for the Bergman distance.
	
	\medskip
	
	We will show in Section ~\ref{strictlylinearlyconvexcase} that if $\O$ is strictly linearly convex, the term $\|z-w\|^2$ in $A_\O(z,w)$ in the estimates from Theorem \ref{profnikolovsdream} is superfluous.
	
	\medskip
	
	The lower bound in Theorem \ref{profnikolovsdream} for the Kobayashi distance $k_\O$ will be derived from Proposition \ref{normallowerbound}, stated below, in combination with the estimate provided in \cite[Theorem 1.6, Theorem 1.7]{NT}.
	\begin{prop}\label{normallowerbound}
		Let $\O$ be a strongly pseudoconvex domain with  $\mathcal{C}^{2,\alpha}$-smooth boundary. Then, there exists a constant $c>0$ such that
		$$ \log\left(1+\dfrac{c\|(z-w)_z\|}{\delta^{1/2}_\O(z)\delta^{1/2}_\O(w)}\right)\leq k_\Omega(z,w), \:\:\:z,w\in \O .$$
	\end{prop}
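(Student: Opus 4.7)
The strategy is to use a Henkin--Ramirez-type holomorphic support function at the boundary point $p:=\pi_\Omega(z)$ together with the distance-decreasing property of the Kobayashi distance. When $z$ or $w$ lies in a compact subset of $\Omega$, both $\delta_\Omega(z),\delta_\Omega(w)$ are bounded below and the inequality is immediate from the trivial Euclidean lower bound on $k_\Omega$ combined with $\|(z-w)_z\|\le\|z-w\|$; so I may assume $z$ and $w$ are both close to $p$.

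For such $p$, by the $\mathcal C^{2,\alpha}$ regularity and strong pseudoconvexity I would invoke a holomorphic function $\Psi_p$ on (a neighborhood of) $\bar\Omega$ with $\Psi_p(p)=0$, $\operatorname{Re}\Psi_p>0$ on $\Omega$, leading linear part $c_0\langle\cdot-p,\eta_p\rangle$, and the local estimates $\operatorname{Re}\Psi_p(z)\ge c(\delta_\Omega(z)+\|z-p\|^2)$ and $|\Psi_p(z)|\le C\|z-p\|$. Viewing $\Psi_p\colon\Omega\to\mathbb{H}$ (the right half-plane), the monotonicity of the Kobayashi distance gives $k_\Omega(z,w)\ge k_{\mathbb H}(\Psi_p(z),\Psi_p(w))$. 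Together with the explicit formula
\[
e^{k_{\mathbb H}(u,v)}-1 \;=\; \frac{|u+\bar v|+|u-v|-2\sqrt{\operatorname{Re} u\,\operatorname{Re} v}}{2\sqrt{\operatorname{Re} u\,\operatorname{Re} v}} \;\ge\; \frac{|u-v|}{2\sqrt{\operatorname{Re} u\,\operatorname{Re} v}},
\]
one obtains
\[
e^{k_\Omega(z,w)}-1 \;\ge\; \frac{|\Psi_p(z)-\Psi_p(w)|}{2\sqrt{\operatorname{Re}\Psi_p(z)\,\operatorname{Re}\Psi_p(w)}}.
\]

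Taylor expanding $\Psi_p$ and using $p=\pi_\Omega(z)$, so that $\langle z-w,\eta_p\rangle=(z-w)_z$ and $\|z-p\|=\delta_\Omega(z)$, the numerator is bounded below by $c_0\|(z-w)_z\|-C(\delta_\Omega(z)+\|w-p\|)\,\|z-w\|$. The upper estimates on $\operatorname{Re}\Psi_p$ give $\operatorname{Re}\Psi_p(z)\lesssim\delta_\Omega(z)$ and $\operatorname{Re}\Psi_p(w)\lesssim\delta_\Omega(w)+\|w-p\|^2$. In the non-tangential case $\|w-p\|^2\lesssim\delta_\Omega(w)$, this immediately yields the required bound after absorbing the lower-order error in the numerator by a case split on whether $\|z-w\|$ dominates $\|(z-w)_z\|/\delta_\Omega(z)^{1/2}$ (in the dominating case, a crude bound from \cite{NT} already gives the conclusion).

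\emph{The main obstacle} lies in the tangential regime $\|w-p\|^2\gg\delta_\Omega(w)$, where the naive denominator $\sqrt{\delta_\Omega(z)(\delta_\Omega(w)+\|w-p\|^2)}$ is too large. I expect to resolve this by either (i) combining with the tangential estimates from \cite{NT} (which deliver the $\|z-w\|^2/\delta_\Omega(z)\delta_\Omega(w)$ part of $A_\Omega$) to show that if $\|w-p\|^2\gg\delta_\Omega(w)$ then $\|z-w\|$ is already large enough to swallow $\|(z-w)_z\|$, or (ii) employing a uniform family of support functions parameterized by points near $p$ on $\partial\Omega$ and exploiting the $\mathcal C^{2,\alpha}$ regularity to control the quadratic remainders uniformly. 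Either way, the careful bookkeeping of the quadratic error in $\Psi_p$ against the possibly small linear term $(z-w)_z$ is the central technical step.
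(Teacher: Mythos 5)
Your support-function strategy is genuinely different from the paper's route, which instead deduces the non-tangential case from \cite[Proposition 2]{KN} and then resolves the tangential case by feeding the diameter estimate from Proposition~\ref{themissingpart} (that $d_e(\varphi)\sim\delta_\O^{1/2}(\varphi(0))$ for a complex geodesic through $z,w$) into \cite[Proposition 8]{KN}. Your half-plane reduction and identity for $e^{k_{\mathbb H}}-1$ are correct, and the argument does cover the non-tangential regime. However, the gap you flag in the tangential regime is not just a technical nuisance --- it is where the support-function method actually breaks, and neither of your proposed fixes closes it.

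The concrete problem is the denominator term $\operatorname{Re}\Psi_p(w)$ with $p=\pi_\O(z)$. For any Henkin--Ramirez type peak/support function one has (sharply, not just as an upper bound) $\operatorname{Re}\Psi_p(w)\sim\delta_\O(w)+\|w-p\|^2$ near $p$, so when $z,w\to p$ complex tangentially $\operatorname{Re}\Psi_p(w)$ is of order $\|w-p\|^2\gg\delta_\O(w)$, and the bound you obtain is
$e^{k_\O(z,w)}-1\gtrsim\|(z-w)_z\|/\sqrt{\delta_\O(z)\|w-p\|^2}$, which loses a factor $\|w-p\|/\delta_\O^{1/2}(w)$ that is unbounded in the tangential regime. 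Your fix (i), invoking the estimate \eqref{partofthelowerbound} from \cite{NT}, only supplies $e^{k_\O}-1\gtrsim\|z-w\|^2/\sqrt{\delta_\O(z)\delta_\O(w)}$ (plus the two single-factor terms), and this does not cover $\|(z-w)_z\|$: by Theorem~\ref{profnikolovsconjecture}, on a tangential complex geodesic $\varphi$ one has $\|(z-w)_z\|$ as large as $\sim d_e(\varphi)\|z-w\|$ with $d_e(\varphi)\gg\|z-w\|,\ \delta_\O^{1/2}(z),\ \delta_\O^{1/2}(w)$ simultaneously, so $\|(z-w)_z\|$ can dominate all three of $\|z-w\|^2$, $\|z-w\|\delta_\O^{1/2}(z)$ and $\|z-w\|\delta_\O^{1/2}(w)$. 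Your fix (ii) (a family of support functions at nearby boundary points) has the same obstruction, since any support function at $p'$ near $\pi_\O(z)$ still yields $\operatorname{Re}\Psi_{p'}(w)\sim\|w-p'\|^2$, and the leading numerator term becomes $\langle z-w,\eta_{p'}\rangle$ which differs from $(z-w)_z$ by errors of order $\|p'-\pi_\O(z)\|\,\|z-w\|$ that you cannot absorb without already knowing the geodesic geometry. In short, what is missing is precisely the quantitative information that the paper extracts from the tangential behavior of complex geodesics (Proposition~\ref{themissingpart} and the scaling in Section~\ref{transformations}); a single support function at $\pi_\O(z)$ cannot see it.
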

	A partial case of the above result was proved in \cite[Proposition 2]{KN}.
	
	\medskip
	Let us introduce some additional notation. For functions $f,g$ taking non-negative values on a given set $X$, we shall write $f \gtrsim g$ or $g \lesssim f$ if there exists a $c>0$ such that $ f(x) \geq c g(x)$ for all $x\in X$. The meaning of $f\sim g$ is analogous: it says that $f\gtrsim g$ and $f \lesssim g$.
	
	\medskip
	
	In order to prove Proposition \ref{normallowerbound} we will need several additional results, interesting on their own, about the behavior of real and complex geodesics of strongly pseudoconvex domains.

	\begin{prop}\label{quantitativevisibility}
		Let $\O$ be a strongly pseudoconvex domain with  $\mathcal{C}^{2,\alpha}$-smooth boundary. If $z,w\in\Omega$ and $\gamma_{z,w}:I\rightarrow \Omega$ is a real Kobayashi geodesic joining $z$ to $w$, then
		\begin{equation}\label{mainresultequation}
			D^{1/2}_\O(\gamma_{z,w}) \gtrsim	l(\gamma_{z,w}),
		\end{equation}
		where $D_\O(\gamma_{z,w}):=\max\{\delta_\Omega(\gamma(t)):t\in I\}$ and $l(\gamma_{z,w})$ denotes the Euclidean length of $\gamma(I)$. The
		constants implicit in the estimate above depend only on $\O$.
	\end{prop}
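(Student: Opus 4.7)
We parametrize $\gamma := \gamma_{z,w}: [a,b] \to \Omega$ by Kobayashi arc length, so that $\kappa_\Omega(\gamma(t); \gamma'(t)) \equiv 1$ and $b - a = k_\Omega(z, w)$. The standard tangential lower bound
\[
\kappa_\Omega(p; v) \gtrsim \frac{\|v\|}{\sqrt{\delta_\Omega(p)}},
\]
valid in any strongly pseudoconvex domain with smooth boundary, immediately yields the pointwise Euclidean speed estimate $\|\gamma'(t)\| \lesssim \sqrt{\delta(\gamma(t))}$, and hence
\[
l(\gamma) = \int_a^b \|\gamma'(t)\|\,dt \lesssim \int_a^b \sqrt{\delta(\gamma(t))}\,dt.
\]
Fix $t_0 \in [a,b]$ where the function $t \mapsto \delta(\gamma(t))$ attains its maximum $D$. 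The proposition thus reduces to showing that $\int_a^b \sqrt{\delta(\gamma(t))}\,dt \lesssim \sqrt{D}$.

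The crux of the argument is to establish exponential decay of the boundary distance away from $t_0$: namely,
\[
\delta(\gamma(t)) \lesssim D\,e^{-c|t-t_0|}, \qquad t \in [a,b],
\]
for some constant $c > 0$ depending only on $\Omega$. Once this is proved, the displayed integral is controlled by $\sqrt{D}\int_{\mathbb{R}} e^{-c|s|/2}\,ds$, giving $l(\gamma) \lesssim \sqrt{D}$ as required.

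To establish the decay, the plan is to compare $\gamma$ with a complex geodesic (extremal holomorphic disc) $\phi : \Delta \to \Omega$ joining $z$ and $w$, furnished by Lempert's theory together with Huang's extension to strongly pseudoconvex domains with $\mathcal{C}^{2,\alpha}$-smooth boundary. Such $\phi$ extends smoothly to $\overline{\Delta}$ and, by a Hopf-type lemma, satisfies $\delta(\phi(\zeta)) \asymp 1 - |\zeta|^2$, so the boundary distance along the reference curve $\tilde\gamma := \phi \circ g$, where $g$ is the real Poincar\'e geodesic between $\phi^{-1}(z)$ and $\phi^{-1}(w)$, decays exponentially in Poincar\'e arc length from its deepest point. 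Because $\tilde\gamma$ is a quasi-geodesic in $(\Omega, k_\Omega)$ joining $z$ to $w$, the Gromov hyperbolicity of $(\Omega, k_\Omega)$ proved by Balogh--Bonk \cite{BB} forces $\gamma$ to remain within bounded Kobayashi distance of $\tilde\gamma$, and since bounded Kobayashi displacement corresponds to a bounded multiplicative change in $\delta$, the exponential decay transfers from $\tilde\gamma$ to $\gamma$.

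The principal obstacle is the quantitative nature of this transfer: the implicit constants must remain uniform as $(z,w)$ approaches $\partial\Omega$, the maximal depths $D$ along $\gamma$ and $D_\phi$ along $\tilde\gamma$ must be comparable up to a universal multiplicative factor, and the comparison between $\gamma$ and $\tilde\gamma$ must be sharp enough that a multiplicative, rather than merely additive, bound on $\delta$ is preserved. Making these ingredients rigorous relies on a careful use of Lempert--Huang theory and of the boundary regularity of stationary discs; an alternative route would be to combine the lower bounds \cite[Propositions~1.6--1.7]{NT} on $k_\Omega$ applied to the pair $(\gamma(t_0), \gamma(t))$ with an explicit competitor path in $\Omega$ to force the decay directly from Euclidean data.
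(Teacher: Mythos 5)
Your high-level plan --- bounding $\|\gamma'(t)\| \lesssim \delta_\Omega^{1/2}(\gamma(t))$ via the tangential lower bound for $\kappa_\Omega$ and then integrating an exponential decay of $\delta_\Omega \circ \gamma$ away from the deepest point --- is an attractive alternative to the paper's route, which instead invokes the Gehring--Hayman inequality of \cite{KNT} (restated as Lemma~\ref{kosinskilemma}), $l(\gamma) \sim \|z-w\|$, to reduce the claim to the pointwise bound $D_\Omega^{1/2}(\gamma) \gtrsim \|z-w\|$. But the argument you sketch for the decay has a genuine gap: it requires a complex geodesic $\phi$, i.e.\ a holomorphic $k_\Omega$-isometry, through $z$ and $w$ with a $\mathcal{C}^1$-extension to $\overline{\Delta}$. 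In a non-convex strongly pseudoconvex domain such a $\phi$ need not exist for arbitrary $z,w$. Lempert's theorem is a convexity result; the Huang/Bracci--Forn\ae ss--Wold theory --- packaged as Lemma~\ref{complexgeodesics} in the paper --- supplies complex geodesics only when $z,w$ lie near the boundary with $z-w$ nearly complex-tangential. What always exists is an extremal disc for the Lempert function $l_\Omega$, but since $l_\Omega > k_\Omega$ in general in the non-convex setting, such a disc is not a $k_\Omega$-isometry, $\phi \circ g$ is not obviously a $k_\Omega$-quasi-geodesic, and the boundary regularity feeding the Hopf-lemma estimate $\delta_\Omega(\phi(\zeta)) \asymp 1-|\zeta|$ is not available outside the localized, tangential setting.

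This is exactly where the paper's case split enters. In the tangential case, where your mechanism essentially works, the paper uses Lemma~\ref{complexgeodesics} and a scaling analysis. In the transversal case, where your comparison curve is unavailable, the paper instead constructs an explicit $(2,\epsilon)$-quasi-geodesic by joining two Euclidean inward-normal segments emanating from $z$ and $w$; this curve carries depth $\gtrsim \|z-w\|$ by construction, and the estimate is then transferred to the true geodesic $\gamma$ via Gromov hyperbolicity and the geodesic stability lemma together with the comparison $k_\Omega(x,y) \geq \tfrac12|\log(\delta_\Omega(x)/\delta_\Omega(y))| - c$ --- the same transfer mechanism you envisaged. That explicit normal-segment quasi-geodesic is the ingredient missing from your argument in the transversal regime.
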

	The behavior of the geodesics of the unit ball
	shows that the above result is sharp. In particular, the power $1/2$ of the maximum boundary distance cannot be improved.
	
	Let us point out that Proposition \ref{quantitativevisibility} is an improvement of the first part of \cite[Corollary 12]{NO}. However, the technique there is entirely different from the one applied in our paper. It should also be mentioned that the first part of \cite[Corollary 12]{NO} is valid if the boundary is just $\mathcal{C}^2$-smooth, while it is not clear whether Proposition~\ref{quantitativevisibility} remains true under lower boundary regularity assumptions.
	
	Throughout the paper, let $d_e(.)$ denote the Euclidean diameter of a set.
	
	Using Proposition \ref{quantitativevisibility} we will also prove the following result, conjectured in \cite{KN}.
	
	\begin{prop}\label{themissingpart}
		Let $\O$ be a strongly pseudoconvex domain with $\mathcal{C}^{2,\alpha}$-smooth boundary. For any complex geodesic $\varphi:\Delta\rightarrow\Omega$ parametrized so that
		$$D_\O(\varphi):=\max_{\zeta\in\Delta}\delta_{\Omega}(\varphi(\zeta))=\delta_\O(\varphi(0))$$ the following uniform estimates hold:
		\begin{equation}\label{nottrivial} D^{\frac{1}{2}}_\Omega(\varphi)  \sim
			d_e(\varphi (\Delta))\sim \max_{z\in\Delta}\|\varphi'(z)\| .
		\end{equation}
	\end{prop}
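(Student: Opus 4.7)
The plan is to chain the three quantities through $D_\Omega(\varphi)^{1/2}$, proving the four inequalities $d_e(\varphi(\Delta)) \lesssim D_\Omega(\varphi)^{1/2}$, $D_\Omega(\varphi)^{1/2} \lesssim d_e(\varphi(\Delta))$, $D_\Omega(\varphi)^{1/2} \lesssim \max_{\zeta\in\Delta}\|\varphi'(\zeta)\|$, and $\max_{\zeta\in\Delta}\|\varphi'(\zeta)\| \lesssim D_\Omega(\varphi)^{1/2}$. For the first, I would exploit that $\varphi$ is a Kobayashi isometry: given $\zeta_1, \zeta_2 \in \Delta$, let $\sigma \subset \Delta$ be the hyperbolic geodesic joining them; then $\gamma := \varphi \circ \sigma$ is a real Kobayashi geodesic in $\Omega$, and Proposition \ref{quantitativevisibility} yields $\|\varphi(\zeta_1) - \varphi(\zeta_2)\| \leq l(\gamma) \lesssim D_\Omega(\gamma)^{1/2} \leq D_\Omega(\varphi)^{1/2}$, so $d_e(\varphi(\Delta)) \lesssim D_\Omega(\varphi)^{1/2}$ after taking the supremum.

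The two lower bounds pass through the pointwise estimate $\|\varphi'(0)\| \sim D_\Omega(\varphi)^{1/2}$. Because $\zeta = 0$ is a local maximum of $\zeta \mapsto \delta_\Omega(\varphi(\zeta))$, computing its real differential at $0$ and using the Cauchy--Riemann equations for $\varphi$ forces $\langle \eta_{\pi_\Omega(\varphi(0))}, \varphi'(0)\rangle = 0$, so $\varphi'(0)$ lies in the complex tangent plane to $\partial\Omega$ at $\pi_\Omega(\varphi(0))$. Combining the Schwarz--Pick identity $\kappa_\Omega(\varphi(0); \varphi'(0)) = 1$ with Graham's asymptotic estimate $\kappa_\Omega(z; v) \sim \|v\|/\delta_\Omega(z)^{1/2}$ for $v$ complex tangential at $\pi_\Omega(z)$ then produces $\|\varphi'(0)\| \sim D_\Omega(\varphi)^{1/2}$. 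The inequality $\max_{\zeta\in\Delta}\|\varphi'(\zeta)\| \geq \|\varphi'(0)\| \gtrsim D_\Omega(\varphi)^{1/2}$ is immediate, while Cauchy's inequality applied to $\varphi - \varphi(0)$ on the circle $|\omega| = 1/2$ yields $d_e(\varphi(\Delta)) \geq \max_{|\omega|=1/2}\|\varphi(\omega) - \varphi(0)\| \geq \|\varphi'(0)\|/2 \gtrsim D_\Omega(\varphi)^{1/2}$.

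The most delicate step is the uniform upper bound $\max_{\zeta \in \Delta}\|\varphi'(\zeta)\| \lesssim D_\Omega(\varphi)^{1/2}$, since the naive Cauchy/Schwarz estimate only gives $\|\varphi'(\zeta)\| \lesssim D_\Omega(\varphi)^{1/2}/(1-|\zeta|^2)$, which degenerates at $\partial\Delta$. The plan is to combine: (i) the subharmonicity of $\|\varphi'(\zeta)\|^2 = \sum_j |\varphi_j'(\zeta)|^2$ together with the $\mathcal{C}^{1,\alpha/2}$-continuous extension of $\varphi$ to $\overline\Delta$ afforded by the Lempert--Huang theory, so that by the maximum principle it suffices to bound $\|\varphi'\|$ on $\partial\Delta$; (ii) the integral estimate $\int_{-1}^{1}\|\varphi'(te^{i\theta})\|\,dt \lesssim D_\Omega(\varphi)^{1/2}$ for every direction $\theta$, obtained by applying Proposition \ref{quantitativevisibility} to the real Kobayashi geodesic $s \mapsto \varphi(\tanh(s)\, e^{i\theta})$; and (iii) a uniform control of the $\mathcal{C}^{\alpha/2}$-seminorm of $\varphi'$ by $D_\Omega(\varphi)^{1/2}$. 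A standard Hölder interpolation of (ii) with (iii) along any radius ending at $\omega \in \partial\Delta$ delivers $\|\varphi'(\omega)\| \lesssim D_\Omega(\varphi)^{1/2}$, and (i) propagates the bound to all of $\Delta$.

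The main obstacle is establishing (iii): the $\mathcal{C}^{\alpha/2}$-seminorm of $\varphi'$ must scale like $D_\Omega(\varphi)^{1/2}$ uniformly across the family of complex geodesics of $\Omega$. I would approach this via a rescaling argument at $\pi_\Omega(\varphi(0))$: normalizing $\varphi$ so that its image has unit Euclidean diameter in an anisotropically dilated copy of $\Omega$ reduces the problem, as $D_\Omega(\varphi) \to 0$, to a model domain (the ball or a Siegel half-space) on which the complex geodesics are explicit and the Lempert--Huang boundary regularity is uniform; pulling the resulting uniform estimate back to $\varphi$ produces the required scaling of the Hölder seminorm.
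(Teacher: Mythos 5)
Your steps 1--3 are correct and run parallel to the paper's argument: step 1 uses Proposition~\ref{quantitativevisibility} applied to the real geodesics $\varphi\circ\sigma$ inside $\varphi(\Delta)$ to get $d_e(\varphi(\Delta))\lesssim D_\O^{1/2}(\varphi)$, exactly as in the paper (which delegates the details to the argument of \cite{KN}); steps 2--3 give the pointwise estimate $\|\varphi'(0)\|\sim D_\O^{1/2}(\varphi)$ via the tangentiality of $\varphi'(0)$ at the deepest point (this is the paper's Lemma~\ref{lem:tan}) combined with Ma's infinitesimal estimate \eqref{maestimate}. That pointwise route is clean and arguably more explicit than the paper's reference to \cite[Prop.~7]{KN}. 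One small caveat: when $d_e(\varphi)$ is bounded below the point $\varphi(0)$ may be deep inside $\O$, where $\pi_\O$ is not single-valued, so the tangentiality argument breaks down; you need to treat that regime separately (as the paper's ``Case I''), where all three quantities are trivially $\sim 1$ by compactness of the family of complex geodesics passing through a fixed compact set.

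The real issue is the design of step 4. You correctly isolate $\max_{\Delta}\|\varphi'\|\lesssim D^{1/2}_\O(\varphi)$ as the delicate inequality and you correctly identify that it must come from a rescaling at the boundary point. But the three-part mechanism you propose --- subharmonicity/maximum principle (i), the $L^1$-bound along diameters (ii), and H\"older interpolation against the scaled $\mathcal C^{\alpha/2}$-seminorm (iii) --- is a detour: the input (iii) already subsumes the conclusion. To prove (iii) you must show that the rescaled disc $\tilde\varphi:=A_t^{-1}\circ\varphi$ (with $1-t\sim D_\O(\varphi)$) is $\mathcal C^{1,\alpha}$-close to a complex geodesic of the ball in $\{0\}\times\C^{N-1}$ --- this is Claim~\ref{claimabouttangentialconvergence} in the paper and it is the genuinely hard nontrivial content here. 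But once you know that, $\max\|\tilde\varphi'\|\lesssim 1$ and the derivative of $A_t$ along the range of $\tilde\varphi$ has operator norm $\sim(1-t^2)^{1/2}\sim D^{1/2}_\O(\varphi)$, so $\max\|\varphi'\|\lesssim D^{1/2}_\O(\varphi)$ falls out directly, with no need for the maximum principle, the integral estimate, or the interpolation lemma. The paper exploits precisely this: it applies Remark~\ref{remarkaboutdilationofdistances} to get $d_e(\varphi)\sim(1-t^2)^{1/2}d_e(\tilde\varphi)$ and $\max\|\varphi'\|\sim(1-t^2)^{1/2}\max\|\tilde\varphi'\|$ simultaneously, and closes the loop using the exact relation $d_e(\psi(\Delta))=2\max\|\psi'\|$ for geodesics $\psi$ of the ball. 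So your plan is not wrong, but the interpolation machinery buys you nothing: it reduces the problem to a statement (iii) that is both the hardest ingredient and, by itself, already sufficient by a one-line pull-back. If you streamline step 4 by working directly with the $\mathcal C^1$-bound on the rescaled disc, you recover the paper's argument.
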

	Building upon this estimate, a stronger result that precisely describes the behavior of complex geodesics will be presented. For the class of domains
	we are considering, it complements \cite[Theorem 3]{KN}.
	
	\begin{thm}\label{profnikolovsconjecture}
		Let $\O$ be a strongly pseudoconvex domain with $\mathcal{C}^{2,\alpha}$-smooth boundary. For any complex geodesic $\varphi:\Delta\rightarrow\O$
		and $z,w\in\varphi(\Delta)$ we have
		\begin{equation}
			\dfrac{\|(z-w)_z\|}{\|z-w\|}+\delta^{1/2}_\O(z)+\|z-w\| \sim d_e(\varphi)
		\end{equation}
	\end{thm}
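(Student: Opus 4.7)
The plan is to use Proposition \ref{themissingpart} to reduce the comparison with $d_e(\varphi(\Delta))$ to one with $\rho := D_\O^{1/2}(\varphi)$, and then to establish the equivalence term by term. After a M\"obius reparametrization of $\Delta$ we may assume $D_\O(\varphi) = \delta_\O(\varphi(0))$; Proposition \ref{themissingpart} then gives $d_e(\varphi(\Delta)) \sim \rho \sim \max_{\zeta\in\Delta}\|\varphi'(\zeta)\|$. It thus suffices to show
$$\frac{\|(z-w)_z\|}{\|z-w\|} + \delta_\O^{1/2}(z) + \|z-w\| \sim \rho, \qquad z, w \in \varphi(\Delta).$$

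For the upper bound, the estimates $\|z-w\|\leq d_e(\varphi)\lesssim \rho$ and $\delta_\O^{1/2}(z)\leq \rho$ are immediate. The nontrivial task is $\|(z-w)_z\|\lesssim \rho\|z-w\|$, which encodes the near-complex-tangential nature of $\varphi(\Delta)$ when $\rho$ is small. I would approach it through the holomorphic function $g(\zeta) := \langle \varphi(\zeta) - w, \eta_{\pi(z)}\rangle$, which vanishes at $\zeta_w$ (where $\varphi(\zeta_w)=w$) and satisfies $|g(\zeta_z)| = \|(z-w)_z\|$ (where $\varphi(\zeta_z)=z$). The Lempert complex-tangency condition $\langle \varphi'(\zeta), \eta_{\varphi(\zeta)}\rangle = 0$ for $\zeta \in \partial\Delta$, combined with the $\alpha$-H\"older regularity of the Gauss map (from $\mathcal{C}^{2,\alpha}$-smoothness of $\partial\O$) and the uniform bound $\|\varphi'(\zeta)\|\leq \max_\Delta\|\varphi'\|\lesssim \rho$, forces $|g|$ to be small on $\partial\Delta$; a Schwarz-type argument should then propagate this control to the interior and deliver the required bound.

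For the lower bound, if $\|z-w\|\geq \rho/3$ or $\delta_\O(z)\geq \rho^2/9$, one of the last two terms already dominates. Otherwise, both $z$ and $w$ lie in the slab $\{\delta_\O<\rho^2/9\}$ and within Euclidean distance $\rho/3$ of each other; we then need $\|(z-w)_z\|\gtrsim \rho\|z-w\|$. The key geometric point is that $\varphi(\Delta)$ also contains the point $\varphi(0)$, which sits at depth $\rho^2$ inside $\O$; hence the complex affine line through $z$ and $w$ cannot be purely complex-tangent to $\partial\O$ at $\pi(z)$. The quadratic shape of $\partial\O$ afforded by $\mathcal{C}^{2,\alpha}$-smoothness then forces the tilt of this line from the complex tangent plane to be of order $\rho$, yielding the desired bound.

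The main obstacle is the lower bound in this degenerate regime, where both $\|z-w\|$ and $\delta_\O^{1/2}(z)$ are small compared to $\rho$: quantifying the required normal excursion of $\varphi(\Delta)$ demands a careful synthesis of the second-order geometry of $\partial\O$ with the precise structural information about complex geodesics developed in the preceding sections (notably Propositions \ref{quantitativevisibility} and \ref{themissingpart}). The upper bound is similarly delicate, since a direct integration estimate along $[\zeta_w,\zeta_z]$ is not sharp enough: Lempert's boundary condition must be used in an essential way to close the argument.
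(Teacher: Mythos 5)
Your overall reduction --- using Proposition~\ref{themissingpart} to replace $d_e(\varphi)$ by $\rho:=D_\O^{1/2}(\varphi)$ and then comparing the three left-hand terms to $\rho$ --- matches the paper's structure, but both estimates you single out as nontrivial contain genuine gaps. For $\|(z-w)_z\|\lesssim\rho\|z-w\|$ (this is part (3) of Lemma~\ref{complexgeodesics}, quoted from \cite{KN}), the boundary identity $\langle\varphi'(\zeta),\eta_{\varphi(\zeta)}\rangle=0$ on $\partial\Delta$ on which your Schwarz argument rests is not a property of complex geodesics. Already for the identity of the disc one has $\langle\varphi'(\zeta),\eta_{\varphi(\zeta)}\rangle=\overline\zeta$, of modulus one; what Lempert's theory actually gives is that the real tangent $i\zeta\varphi'(\zeta)$ of the boundary curve lies in $T_{\varphi(\zeta)}\partial\O$, which only makes $\zeta\langle\varphi'(\zeta),\eta_{\varphi(\zeta)}\rangle$ real (and positive), not zero. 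For tangential geodesics the pairing is indeed small, of order $\rho^2$, which is what you need, but this smallness is a consequence of the $\mathcal{C}^{1,\alpha}$-convergence of the rescaled discs $A_t^{-1}\circ\varphi$ to a ball geodesic lying in $\{0\}\times\mathbb{C}^{N-1}$ (Claim~\ref{claimabouttangentialconvergence}) together with Lipschitz control on the Gauss map, not of any pointwise Lempert identity at $\varphi(\zeta)$. As written, your argument invokes a false premise and does not close.

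Your treatment of $\|(z-w)_z\|\gtrsim\rho\|z-w\|$ in the regime $\|z-w\|\ll\rho$, $\delta_\O(z)\ll\rho^2$ --- which you rightly flag as the main obstacle --- relies on a heuristic that does not capture the actual mechanism. In the scaled ball model $\varphi_t(\zeta)=(t,(1-t^2)^{1/2}\zeta)$ the chord $z-w$ is \emph{exactly} complex-tangent to $\partial\mathbb{B}^N$ at the limiting boundary point $p=e_1$, so the depth of $\varphi(0)$ imposes no tilt on the affine line through $z$ and $w$; indeed $(z-w)_z=0$ when $z=\varphi(0)$, which is precisely why the paper's Claim~\ref{lefttoshow} is formulated only for $|\varphi^{-1}(z)|\ge c$. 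The actual source of the lower bound is that $\eta_{\pi(z)}-\eta_p$ picks up a tangential component $\approx\rho\,\overline{\zeta_z}$ from the curvature of $\partial\O$, and the Hermitian pairing with $z-w\approx(0,\rho(\zeta_z-\zeta_w),0')$ is the complex product $\rho^2(\zeta_z-\zeta_w)\overline{\zeta_z}$, whose modulus is $\rho^2|\zeta_z-\zeta_w||\zeta_z|$ with no cancellation. Making this rigorous for a general strongly pseudoconvex $\O$ is exactly the content of the five-term expansion $\langle z-w,\eta_z\rangle=A_n+B_n+C_n+\langle z-w,E_n(z)\rangle$ in the proof of Claim~\ref{lefttoshow}, where one must show the $B_n$-term dominates; your sketch neither produces nor replaces that computation.
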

	
	Note that since $\O$ is a strongly pseudoconvex domain with $\mathcal C^{2,\alpha}$-smooth boundary, the complex geodesics $\varphi:\Delta\rightarrow\O$ extend $\mathcal C^{1,\alpha}$-smoothly to the closed unit disc (see \cite[Lemma 3.1]{L}). Thus, as the infinitesimal version of Theorem \ref{profnikolovsconjecture} we obtain:
	
	\begin{cor}
		Let $\O$ be a strongly pseudoconvex domain with $\mathcal{C}^{2,\alpha}$-smooth boundary, $\varphi:\Delta\rightarrow \O$ be a complex geodesic
		which extends continuously to the closed disc
		and $\zeta \in \overline \Delta$.
		Then
		$$ \dfrac{\|(\varphi'(\zeta))_{\varphi(\zeta)}\|}{\|\varphi'(\zeta)\|} + \delta^{1/2}_\O(\varphi(\zeta)) \sim d_e(\varphi),\:\:\:\:\:\zeta\in\Delta,$$
		and
		$$ \dfrac{\|(\varphi'(\zeta))_{\varphi(\zeta)}\|}{\|\varphi'(\zeta)\|} \sim d_e(\varphi),\:\:\:\:\:\zeta\in\partial \Delta.$$
	\end{cor}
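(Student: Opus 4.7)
The plan is to deduce both estimates from Theorem~\ref{profnikolovsconjecture} by passing to the limit $w\to z$ along the geodesic. For $\zeta\in\Delta$, I would fix $z=\varphi(\zeta)$, choose any closest boundary point $p=\pi_\O(z)$, and apply the theorem to the pair $(z,w_t)$ with $w_t:=\varphi(\zeta+t)$ for $t\in\C\setminus\{0\}$ small enough that $w_t\in\varphi(\Delta)$. Crucially, as $t$ varies the base point $p$ is fixed, so the complex normal $(z-w_t)_z=\langle z-w_t,\eta_p\rangle$ is a linear functional of $z-w_t$. The Taylor expansion $z-w_t=-t\,\varphi'(\zeta)+O(|t|^2)$ (valid since $\varphi$ is holomorphic, hence smooth in $\Delta$) gives $\|z-w_t\|\to 0$ and
\[
\frac{\|(z-w_t)_z\|}{\|z-w_t\|}\;=\;\frac{\|(\varphi'(\zeta))_{\varphi(\zeta)}\|}{\|\varphi'(\zeta)\|}+O(|t|).
\]
Substituting into the estimate of Theorem~\ref{profnikolovsconjecture} and letting $t\to 0$ yields the first assertion. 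Non-vanishing of $\varphi'(\zeta)$, needed so that the fraction is well-defined, follows from Lempert's regularity theory (\cite[Lemma 3.1]{L}).

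For $\zeta_0\in\partial\Delta$, I would take any sequence $\zeta_n\in\Delta$ with $\zeta_n\to\zeta_0$ and apply the interior estimate just established at each $\zeta_n$:
\[
\frac{\|(\varphi'(\zeta_n))_{\varphi(\zeta_n)}\|}{\|\varphi'(\zeta_n)\|}+\delta_\O^{1/2}(\varphi(\zeta_n))\;\sim\;d_e(\varphi),
\]
with constants independent of $n$. By the $\mathcal C^{1,\alpha}$-extension of $\varphi$ to $\overline\Delta$, $\varphi'(\zeta_n)\to\varphi'(\zeta_0)\neq 0$. Lempert's theorem gives $\varphi(\partial\Delta)\subset\partial\O$, whence $\delta_\O(\varphi(\zeta_n))\to 0$ and $\pi_\O(\varphi(\zeta_n))\to\varphi(\zeta_0)$; the $\mathcal C^{2,\alpha}$-regularity of $\partial\O$ then implies $\eta_{\pi_\O(\varphi(\zeta_n))}\to\eta_{\varphi(\zeta_0)}$. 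Passing to the limit, the term $\delta_\O^{1/2}(\varphi(\zeta_n))$ drops and the second assertion follows.

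The only slightly delicate point is that, for an interior $\zeta$, the closest boundary point $\pi_\O(\varphi(\zeta))$ need not be unique. This is harmless: Theorem~\ref{profnikolovsconjecture} is valid for every admissible choice of projection, and the argument above fixes one such choice once and for all before letting $t\to 0$. Everything else is a routine use of continuity of $\pi_\O$ in a tubular neighborhood of $\partial\O$, continuity of the outer unit normal, and the $\mathcal C^{1,\alpha}$-boundary behavior of complex geodesics — all of which are already invoked elsewhere in the paper.
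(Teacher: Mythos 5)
Your proof is correct and takes essentially the same route the paper intends: the paper simply states that the corollary is "the infinitesimal version of Theorem~\ref{profnikolovsconjecture}," and you supply exactly the limiting argument this alludes to — fix $z=\varphi(\zeta)$, let $w=\varphi(\zeta+t)\to z$ along the disc so that $\|z-w\|\to 0$ and the quotient $\|(z-w)_z\|/\|z-w\|$ tends to $\|(\varphi'(\zeta))_{\varphi(\zeta)}\|/\|\varphi'(\zeta)\|$, then for boundary $\zeta_0$ pass once more to the limit using $\delta_\O(\varphi(\zeta_n))\to 0$ and the $\mathcal C^{1,\alpha}$-continuity of $\varphi'$ and of the unit normal.
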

	
	This result describes the angle of intersection of the boundary of the complex geodesic and the boundary of the domain. It gives a converse to the first statement of \cite[Proposition 7]{KN}, and is an improvement of \cite[Corollary 1.2]{H3} and \cite[Theorem 2]{H2}.

	\begin{rem} Switching $z,w$ in the estimates in Theorem~\ref{profnikolovsdream} one can immediately see that $A_\O(z,w)\sim A_\O(w,z)$. This \textit{symmetry} of $A_{\O}$ can be deduced directly, even in the case when $\O$ has $\mathcal C^{1,1}$-smooth boundary, thus allowing for lower regularity settings.
		
		To see this set $\tilde \delta_\O$ to be the signed boundary distance function, i.e. $\tilde \delta_\O=-\delta_\O$ on $\ov \O$ and $\tilde \delta_\O=\delta_\O$ on $\Cn\setminus\O$, $g(z)=2 \ov \partial \tilde \delta_\O(z)$ and let $\eta_{\pi(z)}$ denote the outer unit normal vector to $\partial \O$ at $\pi_\O(z)$. It is well-known that $\eta_{\pi_\O(z)}= g(\pi_\O(z)) = g(z)$. If $\O$ has $\mathcal{C}^{1,1}$-smooth boundary, $g$ is Lipschitz, so $$ \|(z-w)_z-(z-w)_w\|=|\langle z-w, g(z)-g(w)\rangle| = \mathcal O(\|z-w\|^2).$$
		Furthermore, assuming that $\delta_\O(w)\leq\delta_\O(z)$ we have  $\delta_\O(z)-\delta_\O(w)=\|(z-w)_z^{\mathbb R}\|+\mathcal{O}(\|z-w\|^2),$ where
		$v_z^{\mathbb R}:= \Re (\langle v, \eta_{\pi(z)} \rangle) \eta_{\pi(z)}$ is the real normal component of the vector $v$ taken with respect to $\pi_\O(z)$.
		Consequently, as $\|(z-w)^{\mathbb{R}}_z\|\leq\|(z-w)_z\|$, we have \begin{multline*}
			\|z-w\|\delta^{1/2}_\O(z)\lesssim \|z-w\|\delta^{1/2}_\O(w)+\|z-w\|\|(z-w)_z^{\mathbb R}\|^{1/2}+\mathcal{O}(\|z-w\|^2)
			\\ \lesssim  \|z-w\|\delta^{1/2}_\O(w)+\|(z-w)_z\|+\mathcal{O}(\|z-w\|^2).
		\end{multline*}
		In the last inequality we used the trivial fact that $\|z-w\| \|(z-w)_z^{\mathbb R}\|^{1/2} \leq \|z-w\|^2 +\|(z-w)_z\|$. 	
		
		Combining the two estimates achieved above, one easily infers that $A_\O(z,w)\sim A_\O(w,z)$.
	\end{rem}

	\section{Applications, and relationship to previous work}\label{sectionapplications}
	We will now present some applications of our results, and how some classical estimates can be
	derived from them as corollaries.
	
	We can localize the estimates given in Theorem~\ref{profnikolovsdream} for the Kobayashi distance as well as for the Lempert function. Precisely, the following holds:
	
	\begin{cor}\label{localizationforkobayashiandlempert}
		Let $\O$ be a domain in $\mathbb{C}^N$ and $p\in\partial\O$ be a $\mathcal{C}^{2,\alpha}$-smooth strongly pseudoconvex boundary point. Then, there exist constants $0<c<C$ such that
		\begin{equation*}\log\left(1+c{A_\O(z,w)}\right) \leq  k_\O(z,w)\leq
			l_\O(z,w)\leq  \log\left(1+C{A_\O(z,w)}\right),
	\end{equation*}\end{cor}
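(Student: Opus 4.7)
The plan is to localize Theorem~\ref{profnikolovsdream} via a standard extension trick, exploiting the fact that $A_\Omega(z,w)$ depends only on the boundary $\partial\Omega$ in a neighborhood of the foot points $\pi_\Omega(z),\pi_\Omega(w)$ and so is unchanged by any modification of $\Omega$ away from those foot points. Concretely, I would first use the $\mathcal{C}^{2,\alpha}$ smoothness and strong pseudoconvexity at $p$ to construct, by gluing a local defining function of $\partial\Omega$ near $p$ with a compactly supported strictly plurisubharmonic function, a bounded, $\mathcal{C}^{2,\alpha}$-smooth, strongly pseudoconvex domain $\tilde\Omega\subset\Omega$ with $\tilde\Omega\cap V_0=\Omega\cap V_0$ for some open neighborhood $V_0\ni p$. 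Passing to a smaller neighborhood $V\subset V_0$ chosen so that $z\in V\cap\Omega$ forces $\pi_\Omega(z)\in V_0$, one secures $\delta_\Omega=\delta_{\tilde\Omega}$ and identical outer unit normals at the foot points throughout $V\cap\Omega$, so that $A_\Omega(z,w)=A_{\tilde\Omega}(z,w)$ there.

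With this setup the upper bound is immediate: by the distance-decreasing property of the Lempert function applied to the inclusion $\tilde\Omega\subset\Omega$ and by Theorem~\ref{profnikolovsdream} applied to $\tilde\Omega$,
$$l_\Omega(z,w)\leq l_{\tilde\Omega}(z,w)\leq\log\bigl(1+CA_{\tilde\Omega}(z,w)\bigr)=\log\bigl(1+CA_\Omega(z,w)\bigr).$$
The lower bound on $k_\Omega$ is subtler because the inclusion now goes in the wrong direction. Here I would invoke the classical localization of the Kobayashi distance at strongly pseudoconvex boundary points, which follows from the existence of a local holomorphic peak function at $p$: there exist a neighborhood $W\subset V$ and a constant $K>0$ such that $k_{\tilde\Omega}(z,w)\leq k_\Omega(z,w)+K$ for all $z,w\in W\cap\Omega$. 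Combined with the lower bound of Theorem~\ref{profnikolovsdream} applied to $\tilde\Omega$, this gives $k_\Omega(z,w)\geq\log(1+cA_\Omega(z,w))-K$, which already produces the claim (with a smaller multiplicative constant) as soon as $A_\Omega(z,w)$ exceeds some fixed positive threshold.

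The remaining case, where $A_\Omega(z,w)$ is small, is the main obstacle, since there the additive error $K$ would dominate the logarithmic lower bound. In this regime I would argue directly by integrating a Graham-type local pointwise lower bound on $\kappa_\Omega$ along an absolutely continuous curve joining $z$ and $w$; for sufficiently small $A_\Omega(z,w)$ the quantitative visibility given by Proposition~\ref{quantitativevisibility} (applied to the model $\tilde\Omega$) confines any curve of near-minimal Kobayashi length to a small neighborhood of $p$ where the local pointwise bound on $\kappa_\Omega$ is valid, yielding the estimate without invoking the global structure of $\Omega$. The remaining technical work will be to choose the threshold separating the two regimes and the nested neighborhoods $W\subset V\subset V_0$ so that the large-$A$ and small-$A$ arguments combine into a single uniform estimate with constants $0<c<C$ as stated.
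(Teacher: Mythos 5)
Your setup (construct a strongly pseudoconvex model domain agreeing with $\O$ near $p$ so that $A_\O=A_{\tilde\O}$ locally) and your treatment of the upper bound (monotonicity of the Lempert function plus Theorem~\ref{profnikolovsdream}) match the paper's argument. Your handling of the large-$A_\O$ regime via the additive Kobayashi localization $k_{\tilde\O}\le k_\O+K$ is also sound. The genuine difficulty, as you correctly sense, is the small-$A_\O$ regime, and there your proposed route has a gap.

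The gap is this: Proposition~\ref{quantitativevisibility} constrains Kobayashi \emph{geodesics of} $\tilde\O$, not absolutely continuous curves that are near-minimal for $k_\O$. A near-minimal curve for $k_\O$ lives in $\O$, may exit $\tilde\O$ entirely, and its $\kappa_{\tilde\O}$-length is \emph{not} controlled above by its $\kappa_\O$-length (the inequality goes the wrong way, since $\tilde\O\subset\O$ gives $\kappa_\O\le\kappa_{\tilde\O}$). So quantitative visibility in the model gives you no a priori confinement of such curves near $p$, and in this corollary $\O$ is an arbitrary domain with a single nice boundary point; nothing prevents a candidate curve from wandering far from $p$ where no Graham-type lower bound on $\kappa_\O$ is available. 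You would need, independently, a localization principle for near-minimal $k_\O$-curves between points near $p$ -- which is essentially the content of the result you are trying to prove.

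The paper closes this cleanly by citing the stronger localization result \cite[Theorem~1.1]{NT2}, which gives not only the additive estimate you use but also a \emph{multiplicative} one, $k_{\O\cap U}(z,w)\le C\,k_\O(z,w)$ for $z,w\in\O\cap V$. The multiplicative estimate is exactly what is needed when $A_\O$ (and hence $k_\O$) is small, and the two estimates are then combined by repeating the two-case argument already written out in the proof of Theorem~\ref{profnikolovsdream} for the lower bound on the Carath\'eodory distance (case of bounded versus unbounded sequences). Replacing your proposed integration-along-curves argument with this multiplicative comparison is the missing ingredient.
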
 \noindent for $z,w\in \O$ near $p$.
	
	We will provide a proof of this Corollary in Section~\ref{estimateproofs}. Let us mention that localization for the Carath{\'e}odory distance seems to require global assumptions, see \cite[Theorem 1.6.]{NT2}.
	
	Recall the estimate of Balogh and Bonk:
	
	\begin{thm}\cite[Corollary 1.3]{BB}\label{baloghbonkresult}
		Let $\O$ be a strongly pseudoconvex domain with $\mathcal C^2$-smooth boundary. There exists a constant $C>0$ depending on $\O$ such that
		$$ g_\O(z,w)-C \leq k_\O(z,w) \leq g_\O(z,w)+C .$$
		Here
		$$g_\O(z,w)=\log\left(\dfrac{d^H_\O(\pi_\O(z),\pi_\O(w))^2+\max\{\delta_\O(z),\delta_\O(w)\}}{\delta^{1/2}_\O(z)\delta^{1/2}_\O(w)}\right),$$ where $d^H_\O(\cdot, \cdot)$ is the distance obtained from the Carnot-Carath{\'e}odory metric on $\partial\O$.
	\end{thm}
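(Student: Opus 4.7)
The plan is to deduce Theorem \ref{baloghbonkresult} as a corollary of Theorem \ref{profnikolovsdream}. After exponentiation, Balogh--Bonk's estimate is equivalent to $e^{k_\O(z,w)}\sim e^{g_\O(z,w)}$, while Theorem \ref{profnikolovsdream} already yields $e^{k_\O(z,w)}\sim 1+A_\O(z,w)$. Hence it suffices to establish the purely geometric equivalence
\[
1+A_\O(z,w)\sim\frac{d^H_\O(\pi_\O(z),\pi_\O(w))^2+\max\{\delta_\O(z),\delta_\O(w)\}}{\delta^{1/2}_\O(z)\delta^{1/2}_\O(w)},\qquad z,w\in\O,
\]
with constants depending only on $\O$. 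By boundedness of $\O$ the problem localizes: away from $\partial\O$ every quantity involved is bounded above and below by positive constants, so one may assume $z,w$ close to a common $p\in\partial\O$.

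In adapted local coordinates at $p$ one uses the standard sub-Riemannian description on strongly pseudoconvex boundaries (see \cite{BB}),
\[
d^H_\O(p',q')^2\sim \|p'-q'\|^2+\bigl|\operatorname{Im}\langle p'-q',\eta_{p'}\rangle\bigr|,\qquad p',q'\in\partial\O \text{ near }p,
\]
where the second summand encodes the weight-$2$ Heisenberg direction. Writing $z=\pi_\O(z)-\delta_\O(z)\eta_{\pi_\O(z)}$ and similarly for $w$, one has
\[
\langle z-w,\eta_{\pi_\O(z)}\rangle = \langle\pi_\O(z)-\pi_\O(w),\eta_{\pi_\O(z)}\rangle - \delta_\O(z) + \delta_\O(w)\langle\eta_{\pi_\O(w)},\eta_{\pi_\O(z)}\rangle.
\]
The $\mathcal C^{2}$ regularity of $\partial\O$ gives $\operatorname{Re}\langle\pi_\O(z)-\pi_\O(w),\eta_{\pi_\O(z)}\rangle=O(\|\pi_\O(z)-\pi_\O(w)\|^2)$ and $\langle\eta_{\pi_\O(w)},\eta_{\pi_\O(z)}\rangle=1+O(\|\pi_\O(z)-\pi_\O(w)\|^2)$, so $\|(z-w)_z\|$ is comparable to $|\delta_\O(z)-\delta_\O(w)|+\bigl|\operatorname{Im}\langle\pi_\O(z)-\pi_\O(w),\eta_{\pi_\O(z)}\rangle\bigr|$ up to an $O(\|\pi_\O(z)-\pi_\O(w)\|^2)$ error; moreover $\|\pi_\O(z)-\pi_\O(w)\|\sim\|z-w\|$ modulo $\delta_\O(z)+\delta_\O(w)$.

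The remainder is elementary arithmetic. The AM--GM inequalities $\|z-w\|\delta^{1/2}_\O(z)\leq\tfrac12(\|z-w\|^2+\delta_\O(z))$ and $\max\{\delta_\O(z),\delta_\O(w)\}\sim |\delta_\O(z)-\delta_\O(w)|+\delta^{1/2}_\O(z)\delta^{1/2}_\O(w)$, combined with the symmetry of $A_\O$ recorded in the preceding Remark, allow one to match each summand on the left-hand side of the required equivalence with one on the right and vice versa. Once this equivalence is verified, taking logarithms and invoking Theorem \ref{profnikolovsdream} produces $k_\O(z,w)=g_\O(z,w)+O(1)$.

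The main obstacle is the bookkeeping: every error generated while passing between $z,w$ and their boundary projections must be absorbed by one of the four summands of $A_\O$. The most delicate point is separating the real and imaginary parts of $\langle z-w,\eta_{\pi_\O(z)}\rangle$, since the imaginary part controls the Heisenberg direction of $d^H_\O$ while the real part encodes the boundary-distance difference $\delta_\O(z)-\delta_\O(w)$ up to a $\mathcal C^{2}$ curvature error.
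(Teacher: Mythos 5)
Your proposal reproduces, in essence, the argument the paper gives in Proposition~\ref{profnikolovsclaim}, which is precisely the derivation of Theorem~\ref{baloghbonkresult} from Theorem~\ref{profnikolovsdream}. The ingredients are the same: the Box--Ball estimate for $d^H_\O$ from \cite[Proposition 3.1]{BB}, the decomposition $\pi_\O(z)-\pi_\O(w)=(z-w)+\delta_\O(z)\eta_{\pi_\O(z)}-\delta_\O(w)\eta_{\pi_\O(w)}$, and the elementary absorption of error terms using AM--GM together with the relation $\delta_\O(z)-\delta_\O(w)=\|(z-w)_z^{\mathbb R}\|+\mathcal O(\|z-w\|^2)$; the paper phrases this via $\|(z-w)_z^{\mathbb R}\|$, you via $|\delta_\O(z)-\delta_\O(w)|$ and the real/imaginary split, which are the same thing modulo quadratic errors. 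The logical structure, namely to prove the comparability $(1+A_\O(z,w))\delta_\O^{1/2}(z)\delta_\O^{1/2}(w)\sim d^H_\O(\pi_\O(z),\pi_\O(w))^2+\max\{\delta_\O(z),\delta_\O(w)\}$ and then exponentiate, is identical.

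One point you should make explicit: Theorem~\ref{baloghbonkresult} as stated holds for $\mathcal C^2$-smooth boundaries, but this derivation requires $\mathcal C^{2,\alpha}$-smoothness because Theorem~\ref{profnikolovsdream} does. The paper avoids overclaiming by framing the result as Proposition~\ref{profnikolovsclaim}, which explicitly carries the $\mathcal C^{2,\alpha}$ hypothesis and asserts only that the Balogh--Bonk estimate \emph{follows} from Theorem~\ref{profnikolovsdream} under that stronger regularity. Your write-up would benefit from the same caveat; as written it could be read as claiming a new proof of the full $\mathcal C^2$ case, which the argument does not give.
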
	
	
	\begin{prop}
		\label{profnikolovsclaim}
		Let $\O$ be a strongly pseudoconvex domain with $\mathcal C^{2,\alpha}$-smooth boundary. Then, the estimates in Theorem \ref{baloghbonkresult} follow from Theorem \ref{profnikolovsdream}.
	\end{prop}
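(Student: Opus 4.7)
The plan is to verify $g_\O(z,w) = \log(1+A_\O(z,w)) + \mathcal O(1)$ for $z,w\in\O$; combined with Theorem \ref{profnikolovsdream} this immediately yields the Balogh--Bonk estimate. Cancelling the common denominator $\delta_\O^{1/2}(z)\delta_\O^{1/2}(w)$ that appears both in $e^{g_\O(z,w)}$ and in $1+A_\O(z,w)$, the goal reduces to the two-sided comparison of the numerators
$$
d^H_\O(\pi_\O(z),\pi_\O(w))^2 + \max\{\delta_\O(z),\delta_\O(w)\} \sim \delta_\O^{1/2}(z)\delta_\O^{1/2}(w) + \|(z-w)_z\| + \|z-w\|^2 + \|z-w\|\delta_\O^{1/2}(z).
$$

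The bridge between the sub-Riemannian side and the Euclidean side is the standard local approximation
$$
d^H_\O(p,q)^2 \sim \|p-q\|^2 + \|(p-q)_p\|, \qquad p,q\in\partial\O,
$$
which for $p,q$ close follows from the local Heisenberg model (cf.\ \cite{BB}), and for $p,q$ at bounded distance apart holds because both sides are $\sim 1$ by compactness of $\partial\O$. Substituting $p=\pi_\O(z)$, $q=\pi_\O(w)$ into the above and using $\pi_\O(z)-\pi_\O(w) = (z-w) + \delta_\O(z)\eta_{\pi_\O(z)} - \delta_\O(w)\eta_{\pi_\O(w)}$, one obtains $\|\pi_\O(z)-\pi_\O(w)\| = \|z-w\| + \mathcal O(\max\{\delta_\O(z),\delta_\O(w)\})$ together with the identity $(\pi_\O(z)-\pi_\O(w))_{\pi_\O(z)} = (z-w)_z + \delta_\O(z) - \delta_\O(w)\langle\eta_{\pi_\O(w)},\eta_{\pi_\O(z)}\rangle$. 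Since $\O$ is bounded we have $\max\{\delta_\O(z),\delta_\O(w)\}^2 \lesssim \max\{\delta_\O(z),\delta_\O(w)\}$, so these corrections are absorbed into the $\max$ summand, yielding
$$
d^H_\O(\pi_\O(z),\pi_\O(w))^2 + \max\{\delta_\O(z),\delta_\O(w)\} \sim \|(z-w)_z\| + \|z-w\|^2 + \max\{\delta_\O(z),\delta_\O(w)\}.
$$

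It remains to compare $\max\{\delta_\O(z),\delta_\O(w)\}$ with $\delta_\O^{1/2}(z)\delta_\O^{1/2}(w) + \|z-w\|\delta_\O^{1/2}(z)$, modulo terms already on both sides. The estimate "$\gtrsim$" is immediate because $\delta_\O^{1/2}(z)\delta_\O^{1/2}(w) \leq \max$ and $\|z-w\|\delta_\O^{1/2}(z) \leq \|z-w\|^2+\delta_\O(z)$ by AM--GM. For "$\lesssim$", assume WLOG $\delta_\O(z)\geq\delta_\O(w)$. When $\delta_\O(w)\geq\delta_\O(z)/4$ one has $\delta_\O^{1/2}(z)\delta_\O^{1/2}(w)\gtrsim\delta_\O(z)$; otherwise the Lipschitz property of $\delta_\O$ gives $\|z-w\|\geq\delta_\O(z)-\delta_\O(w)>3\delta_\O(z)/4$, and a local coordinate analysis at $\pi_\O(z)$, where $\partial\O$ is a $\mathcal C^{2,\alpha}$-perturbation of a paraboloid $\re\zeta_1 = -Q(\zeta')$ for a positive-definite Hermitian form $Q$, shows that either $\|(z-w)_z\|\gtrsim\delta_\O(z)$, or else $Q(w')\sim\delta_\O(z)$ forces $\|w'\|^2\sim\delta_\O(z)$, and hence $\|z-w\|^2\gtrsim\delta_\O(z)$.

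The most delicate step is the last local-coordinate analysis: one must carefully track the $\mathcal O(|\zeta|^3)$ error terms stemming from the $\mathcal C^{2,\alpha}$-smoothness and reduce the general case $\delta_\O(w)>0$ to the boundary case $\delta_\O(w)=0$ by replacing $w$ with $\pi_\O(w)$ at an additive cost of $\mathcal O(\delta_\O(w))\leq \mathcal O(\max)$. Once this comparison of the numerators is verified, taking logarithms converts the bounded ratio into the bounded additive discrepancy $g_\O(z,w)-\log(1+A_\O(z,w))=\mathcal O(1)$, completing the derivation of Theorem \ref{baloghbonkresult} from Theorem \ref{profnikolovsdream}.
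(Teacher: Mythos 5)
Your proposal is correct and follows the paper's overall plan: use the Box--Ball estimate to write $d^H_\O(\pi_\O(z),\pi_\O(w))^2$ in Euclidean terms, transfer everything from $\pi_\O(z),\pi_\O(w)$ to $z,w$ at an additive cost of $\mathcal O(\max\{\delta_\O(z),\delta_\O(w)\})$, and reduce to comparing the two numerators. The only genuine divergence is in the one nontrivial inequality, which in the paper's notation is \eqref{fa}, i.e. $\max\{\delta_\O(z),\delta_\O(w)\}\lesssim\delta^{1/2}_\O(z)\delta^{1/2}_\O(w)+\|(z-w)_z\|+\|z-w\|^2$. You establish it via a dichotomy on whether $\delta_\O(w)\ge\delta_\O(z)/4$, and in the hard case fall back on an explicit local--coordinate analysis at $\pi_\O(z)$ that exploits positive-definiteness of the Levi form to force $\|w'\|^2\gtrsim\delta_\O(z)$. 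This works (after filling in the error-term bookkeeping you flag), but it is heavier than necessary and uses strong pseudoconvexity where none is needed. The paper instead applies Taylor's theorem to the $\mathcal C^{1,1}$ function $\delta_\O$ to get $\delta_\O(z)-\delta_\O(w)=\|(z-w)^{\mathbb R}_z\|+\mathcal O(\|z-w\|^2)$, uses $\|(z-w)^{\mathbb R}_z\|\le\|(z-w)_z\|$, and concludes in one line that $\delta_\O(z)\le\delta_\O(w)+\|(z-w)_z\|+C\|z-w\|^2\le\delta^{1/2}_\O(z)\delta^{1/2}_\O(w)+\|(z-w)_z\|+C\|z-w\|^2$; no case split, no Levi form, and $\mathcal C^{1,1}$ boundary regularity suffices rather than $\mathcal C^{2,\alpha}$. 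A minor remark on your delicacy warning: the error in the local model is $o(\|\zeta\|^2)$ already at $\mathcal C^2$ regularity, so you do not actually need to track $\mathcal O(|\zeta|^3)$ terms or invoke $\mathcal C^{2,\alpha}$-smoothness for this step.
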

	
	\begin{proof}[Proof of Proposition \ref{profnikolovsclaim}]
		The classical Box-Ball estimate recalled in \cite[Proposition 3.1]{BB} states that
		\begin{equation}\label{ballboxconsequence}
			d^H_\O(\pi(z),\pi(w))^2 \sim {\|(\pi_\O(z)-\pi_\O(w))_{\pi_\O(z)}\|+\|\pi_\O(z)-\pi_\O(w)\|^2} .
		\end{equation}
		Thus,  \begin{multline*}
			d^H_\O(\pi(z),\pi(w))^2+\max\{\delta_\O(z),\delta_\O(w)\} \sim \\  \|(\pi_\O(z)-\pi_\O(w))_{\pi_\O(z)}\|+\|\pi_\O(z)-\pi_\O(w)\|^2+\max\{\delta_\O(z),\delta_\O(w)\}.
		\end{multline*}
		
		Theorem \ref{profnikolovsdream} implies that
		\begin{equation}\label{uptoadditiveconstant} k_\O(z,w)=\log\left(1+{ A_\O(z,w)}\right)+\mathcal{O}(1) .\end{equation}
		
		We claim that \begin{multline} \label{whydoesourestimateleadtobb}
			(A_\O(z,w)+1)\delta^{1/2}_\O(w)\delta^{1/2}_\O(w) \sim \\
			\|(\pi_\O(z)-\pi_\O(w))_{\pi_\O(z)}\|+\|\pi_\O(z)-\pi_\O(w)\|^2+\max\{\delta_\O(z),\delta_\O(w)\}.\end{multline}
		
		Without loss of generality assume that $\delta_\O(w)\leq \delta_{\O}(z)$. Observe that
		$\|\pi(z)-\pi(w)\|\leq \|z-w\|+\|\pi(z)-z\|+\|\pi(w)-w\|$, whence $ |\|\pi(z)-\pi(w)\|-\|z-w\|| \leq 2 \delta_\O(z)$. This implies that
		\begin{multline*}
			\|(\pi_\O(z)-\pi_\O(w))_{\pi_\O(z)}\|+\|\pi_\O(z)-\pi_\O(w)\|^2+\max\{\delta_\O(z),\delta_\O(w)\} \sim \\ \|(z-w)_z\|+\|z-w\|^2 + \max\{\delta_\O(z),\delta_\O(w)\} .
		\end{multline*}
		It remains to show that
		\begin{equation}\label{fa}\max\{\delta_\O(z),\delta_\O(w)\}\lesssim (A_\O(z,w) +1)\delta^{1/2}_\O(z)\delta^{1/2}_\O(w)
		\end{equation} and
		\begin{equation}\label{sa} \|z-w\|\delta^{1/2}_\O(z)+\delta^{1/2}_\O(z)\delta^{1/2}_\O(w) \lesssim \|(z-w)_z\|+\|z-w\|^2 + \max\{\delta_\O(z),\delta_\O(w)\}.
		\end{equation}
		
		To see \eqref{fa} note that $\max\{\delta_\O(z),\delta_\O(w)\}-\min\{\delta_\O(z),\delta_\O(w)\}= \delta_\O(z)-\delta_\O(w)=\|(z-w)_z^{\mathbb{R}}\|+\mathcal{O}(\|z-w\|^2)$.
		As $\|(z-w)_z^{\mathbb{R}}\|\leq \|(z-w)_z\|$ we have $$\delta_\O(z) \leq \delta_\O(w) + \|(z-w)_z\| + C\|z-w\|^2 \leq  \delta^{1/2}_\O(z)\delta^{1/2}_\O(w) + \|(z-w)_z\| + C\|z-w\|^2,$$ and \eqref{fa} follows.
		
		To prove \eqref{sa}  it is enough to observe that $2\|z-w\|\delta^{1/2}_\O(z) \leq \delta_\O(z)+\|z-w\|^2$ and $ \delta^{1/2}_\O(z)\delta^{1/2}_\O(w) \leq \delta_\O(z)$. Thus, combining \eqref{fa} and \eqref{sa} we see that \eqref{whydoesourestimateleadtobb} holds.
		
		Proposition \ref{profnikolovsclaim} now follows directly from \eqref{uptoadditiveconstant}.\end{proof}
	
	One can see that the estimate \eqref{whydoesourestimateleadtobb} has a real analogue replacing the complex normal direction with the real normal direction.
	
	Let $\O$ be a strongly pseudoconvex domain and recall the well-known estimate \cite{M} for the Kobayashi-Royden metric:
	\begin{equation} \frac{\|v_z\|}{\delta_\O(z)} + \frac{\|v\|}{\sqrt{\delta_\O(z)}} \sim \kappa_\O(z;v), \:\:\:\:\: z \in \O, \: \: v \in \Cn .\label{maestimate} \end{equation}
		
	Recall that by \cite[Proposition 3.1]{P} the Kobayashi-Royden metric is the derivative of the Lempert function.
	
	As $x\sim \log(1+x)$ if $x\geq 0$ is bounded above, the estimate of the Lempert function given in Theorem \ref{profnikolovsdream} implies that for fixed $z\in\O$ and $w\in \O$ sufficiently close to $z$ we have
	$$ l_\O(z,w) \sim {A_\O(z,w)}.$$
	Hence we conclude that if $\O$ has $\mathcal{C}^{2,\alpha}$-smooth boundary, we can get the asymptotic equality \eqref{maestimate} as a consequence of Theorem \ref{profnikolovsdream} in the $\mathcal{C}^{2,\alpha}$-smooth case by fixing $z\in \O$ and letting $w\in \O$ tend to $z$ in the direction of $v$.
	
	By using the estimates we provide in Theorem \ref{profnikolovsdream} for the Bergman distance instead of the Lempert function we recover  (see \cite[Theorem 19.4.6]{JP}) the same estimates for the Bergman metric on strongly pseudoconvex domains with $\mathcal{C}^{3,1}$-smooth boundary:
	
	$$\frac{\|v_z\|}{\delta_\O(z)} + \frac{\|v\|}{\sqrt{\delta_\O(z)}} \sim \beta_\O(z;v), \:\:\:\:\: z \in \O, \: \: v \in \Cn .$$
	
	\section{Preliminaries}\label{transformations}
	
	Let $\Omega$ be a strongly pseudoconvex domain with $\mathcal{C}^{2,\alpha}$-smooth boundary. Some of the estimates considered in the paper are critical when two points $z,w\in\Omega$ approach a point $p\in\partial\Omega$ \emph{complex tangentially}, which throughout the paper means that they converge to a point $p\in\partial\Omega$ with $\|(z-w)_{p}\|/\|z-w\|$ tending to zero (if this is not the case we shall say that they approach the point \emph{complex transversally}). Here, $v_p$ is the complex normal component of the vector $v$ taken with respect to $p\in\partial\Omega$.
	
	To study these cases we will apply a scaling method together with the Fridman-Ma transformations of the domain, as connected together in \cite{KNT}. What is also novel in our approach is the very careful choice of points on geodesics along which we are going to scale as well as quantities that we are going to estimate. Thanks to them we will be able to carry out precise estimates.
	
	For the convenience of the reader, we will briefly recall how to transform a domain and outline the scaling construction we are going to use.
	
	\subsection{Transformation of a domain} Let $\O$ be a strongly pseudoconvex domain with $\mathcal C^{2,\alpha}$-smooth boundary.
	It is well-known that if $x$ is sufficiently close to the boundary, then there exists a unique point, $p:=\pi_\O(x)\in\partial \O$ with $\|p-x\|=\delta_{\O}(x)$.
	
	We will first explain how to modify the domain $\O$ in an appropriate form. The idea is based on the Fridman-Ma \cite{FM} transformation of the domain (see also \cite[Section 2]{KNT} for more details on this construction).
	
	Given $p\in \partial \O$ there exist a biholomorphic mapping $F_p:\overline \O \to \mathbb C^N$ that sends $p$ to $e_1=(1,\ldots, 0)$ and such that $F_p(\O)$ is contained in a ball tangent to $\mathbb B^N$, and near $e_1$ it has a defining function of the form
	\begin{equation}\label{finaldefiningfunction}\rho(z) = -1 +|z|^2 + o(|z-e_1|^{2+\alpha}),\quad \text{as } z\to e_1.
	\end{equation}
	
	It follows from \cite[Lemma 2]{KNT} (see also the proof of \cite[Theorem 4.1]{DGZ}) that for any $z\in \O$ that is sufficiently close to $p$ there exists $q\in \partial \O$, close to $p$, so that $F_q(z)\in (0,1)\times \{0\}^{N-1}.$
	
	Let us denote $F=F_q$. Due to the $\mathcal C^2$-smoothness of the boundary, $\|F(z)-F(w)\|\sim \|z-w\|$ and $\delta_\O(z)\sim \delta_{F(\O)}(F(z))$ for $z,w\in\O$.
	
	Moreover, we will later see in Lemma \ref{letshopethatthisisthefinaldifficulty} that under these transformations we have \begin{align}\label{wds}
		\|(F(z)-F(w))_{F(z)}\|+\|F(z)&-F(w)\|^2+\|F(z)-F(w)\|\delta^{1/2}_{F(\O)}(F(z))\sim 	
		\\\nonumber
		\|(z-w)_z\|+\|z&-w\|^2+\|z-w\|\delta^{1/2}_\O(z) \end{align}
	for $z,w\in\O$ close to $p$ and also that
	\begin{equation}\label{wds1}
		\|(F(z)-F(w))_{F(z)}\| \lesssim  \|(z-w)_z\|+\|z-w\|^2+\|z-w\|\delta_\O(z).
	\end{equation}
	
	Note that as $\O$ has $\mathcal C^{1,\alpha}$-smooth boundary, $F_p$ depends $\mathcal C^\alpha$-continuously on $p$. In particular, this shows that the constants in the estimates \eqref{wds} and \eqref{wds1} change locally uniformly with respect to the base point.

	All of this discussion can be summarized in the following lemma.
	\begin{lem}\label{wemayassume}
		Let $\Omega$ be a strongly pseudoconvex domain in $\mathbb C^N$ with $\mathcal{C}^{2,\alpha}$-smooth boundary and $z\in\overline\O$ be sufficiently close to $\partial \O$. Then, there exists $F\in \mathcal O(\ov \O, \mathbb{C}^N)$ which is a biholomorphism onto its image, sending $z$ to $(1-s,0,...,0)$, which satisfies \eqref{wds} and \eqref{wds1} with constants that are uniform with respect to $z$, and such that a defining function of $F(\O)$ near $e_1$ is of the form \eqref{finaldefiningfunction}.

	\end{lem}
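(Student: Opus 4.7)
The plan is to stitch this lemma together from ingredients already catalogued in this section, since essentially all the hard work is pushed into two auxiliary results that are cited here: \cite[Lemma 2]{KNT}, which produces the axis point, and the forward-referenced Lemma \ref{letshopethatthisisthefinaldifficulty}, which verifies the two comparison estimates. So at this level the proof is more of an assembly than a new computation.

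The construction itself proceeds as follows. For each $p\in\partial\Omega$, let $F_p$ be the Fridman-Ma transformation normalized as recalled above, so that $F_p(p)=e_1$, $F_p(\Omega)$ sits inside a ball tangent to $\mathbb{B}^N$ at $e_1$, and $\partial F_p(\Omega)$ is defined near $e_1$ by a function of the form \eqref{finaldefiningfunction}. Given $z\in\Omega$ sufficiently close to $\partial\Omega$, I would invoke \cite[Lemma 2]{KNT} to obtain $q\in\partial\Omega$ near $\pi_\Omega(z)$ such that $F_q(z)\in(0,1)\times\{0\}^{N-1}$, and set $F:=F_q$. By construction $F(z)=(1-s,0,\dots,0)$ with $s:=1-\|F(z)\|>0$, and $F$ is a biholomorphism of a neighborhood of $\overline\Omega$ onto its image with the prescribed defining function near $e_1$.

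The two comparison estimates \eqref{wds} and \eqref{wds1} are then a direct appeal to Lemma \ref{letshopethatthisisthefinaldifficulty}. The uniformity of the implicit constants with respect to the base point $z$ follows because $\Omega$ has $\mathcal{C}^{2,\alpha}$-smooth boundary, so the family $\{F_p\}_{p\in\partial\Omega}$ depends $\mathcal{C}^\alpha$-continuously on $p$; since $q=q(z)$ stays close to $\pi_\Omega(z)$ and $\partial\Omega$ is compact, both the operator norm of $dF$ and the modulus of continuity of its derivatives can be taken uniform in $z$. In particular the elementary comparisons $\|F(z)-F(w)\|\sim\|z-w\|$ and $\delta_{F(\Omega)}(F(z))\sim\delta_\Omega(z)$ hold with uniform constants.

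The main obstacle — which I would defer to Lemma \ref{letshopethatthisisthefinaldifficulty} — is the comparison of the complex normal components $\|(z-w)_z\|$ and $\|(F(z)-F(w))_{F(z)}\|$. These two quantities are measured along \emph{different} outer unit normals, namely the one to $\partial\Omega$ at $\pi_\Omega(z)$ versus the one to $\partial F(\Omega)$ at $\pi_{F(\Omega)}(F(z))$. Under the $\mathcal{C}^{1,\alpha}$ Gauss map these normal directions agree only up to an error controlled by $\|z-w\|$ together with the curvature of $\partial\Omega$, and the directional derivatives of $F$ twist the tangential part into the normal part with an error proportional to $\|z-w\|^2$. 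This mismatch is precisely what produces the tangential correction terms $\|z-w\|^2$ and $\|z-w\|\delta_\Omega^{1/2}(z)$ on the right-hand side of \eqref{wds} and the slightly stronger $\|z-w\|\delta_\Omega(z)$ in \eqref{wds1}; carrying those corrections through, and verifying that the inequality in \eqref{wds1} is only one-sided whereas \eqref{wds} upgrades to an equivalence, is where the essential technical difficulty sits.
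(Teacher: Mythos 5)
Your proposal matches the paper's own treatment: the paper gives no standalone proof of Lemma~\ref{wemayassume} but states it as a summary of the preceding discussion, which is exactly the assembly you describe --- the Fridman--Ma transformations $F_p$ with the normalized defining function \eqref{finaldefiningfunction}, the choice of $q$ via \cite[Lemma 2]{KNT} so that $F_q(z)$ lands on the positive real $z_1$-axis, the deferred appeal to Lemma~\ref{letshopethatthisisthefinaldifficulty} for \eqref{wds} and \eqref{wds1}, and $\mathcal C^\alpha$-continuity of $p\mapsto F_p$ to make the constants uniform in $z$. You also correctly locate the genuine technical content in the normal-component comparison handled by Lemma~\ref{letshopethatthisisthefinaldifficulty}, so nothing essential is missing.
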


	\subsection{Scaling}	
	Let $z=(z_1,z')$, $m_t(z_1)=\dfrac{z_1+t}{1+t z_1}$ and
	$$ A_t(z)=\left(m_t(z_1),(1-t^2)^{1/2}\dfrac{z'}{1+ t z_1}\right) .$$
	Note that the maps $A^{-1}_t=A_{-t}$ are biholomorphic on $\O$ where $\O$ is as in Lemma \ref{wemayassume}. Set $ A^{-1}_t(\Omega)=\Omega_t$. Recall that (see \cite[Section 2]{KNT}):
	\begin{lem}\label{convergencelemma}
		As $t\rightarrow 1$, $\Omega_t$ converges to the unit ball $\mathbb{B}^N$ in the following sense.
		\begin{enumerate}
			\item For any $\beta>-1$, $\Omega_t\cap \{\re{z_1}>\beta\}$ converges to $\mathbb{B}^N$ in $\mathcal{C}^{2,\alpha}$-sense.
			\item $\Omega_t$ tends to $\mathbb{B}^N$ in Hausdorff sense.
		\end{enumerate}
	\end{lem}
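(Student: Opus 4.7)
The plan is to exploit $A_t\in\mathrm{Aut}(\mathbb{B}^N)$ to convert the order-$(2+\alpha)$ tangency of $\bd\O$ and $\bd\mathbb{B}^N$ at $e_1$ encoded in \eqref{finaldefiningfunction} into quantitative control on the rescaled domain. Writing $\rho(z)=|z|^2-1+h(z)$ near $e_1$, the $\CC^{2,\alpha}$-smoothness of $\bd\O$ together with \eqref{finaldefiningfunction} forces $h$ and its first and second derivatives to vanish at $e_1$, which yields the uniform pointwise bounds
$$ |h(z)|\lesssim|z-e_1|^{2+\alpha},\quad |\nabla h(z)|\lesssim|z-e_1|^{1+\alpha},\quad |\nabla^2 h(z)|\lesssim|z-e_1|^{\alpha} $$
near $e_1$, together with a uniform bound on the $\alpha$-Hölder seminorm of $\nabla^2 h$. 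Since $A_t$ is a sphere automorphism, $|A_t(z)|^2-1=\frac{1-t^2}{|1+tz_1|^2}(|z|^2-1)$, and so
$$ \tilde\rho_t(z):=\frac{|1+tz_1|^2}{1-t^2}\,\rho(A_t(z))=|z|^2-1+e_t(z),\qquad e_t(z):=\frac{|1+tz_1|^2}{1-t^2}\,h(A_t(z)), $$
is a defining function of $\O_t$, and both parts of the lemma reduce to showing that $e_t$ is small on the relevant region.

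For part (1), I fix $\beta>-1$ and $R>0$ and work on $K:=\{|z|\le R,\ \re z_1\ge\beta\}$, on which $|1+tz_1|$ is bounded away from zero uniformly in $t$ close to $1$. Direct inspection of the formulas for $m_t$ and $A_t$ gives $|A_t(z)-e_1|\lesssim(1-t)^{1/2}$ and $\|DA_t\|,\|D^2A_t\|\lesssim(1-t)^{1/2}$ uniformly on $K$. Inserting these into the pointwise estimates on $h$ via the chain and product rules yields $|h\circ A_t|,\,|\nabla(h\circ A_t)|,\,|\nabla^2(h\circ A_t)|\lesssim(1-t)^{1+\alpha/2}$ on $K$, and a parallel computation provides the same bound for the $\alpha$-Hölder seminorm of $\nabla^2(h\circ A_t)$ (using that $A_t$ is Lipschitz with constant $(1-t)^{1/2}$ on $K$). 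Dividing by $1-t^2\sim 1-t$ and absorbing the smooth prefactor, whose derivatives are $O(1/(1-t))$, produces $\|e_t\|_{\CC^{2,\alpha}(K)}=O((1-t)^{\alpha/2})\to 0$, from which the $\CC^{2,\alpha}$-convergence of $\bd\O_t\cap\{\re z_1>\beta\}$ to $\bd\mathbb{B}^N$ follows.

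For part (2), since $A_t^{-1}\in\mathrm{Aut}(\mathbb{B}^N)$ and $\O\subset\mathbb{B}^N$, one has $\O_t\subset\mathbb{B}^N$, so it suffices to show that every compact $L\subset\mathbb{B}^N$ is contained in $\O_t$ for $t$ close to $1$. For $p\in L$, the identity
$$ \rho(A_t(p))=-\frac{1-t^2}{|1+tp_1|^2}(1-|p|^2)+h(A_t(p)) $$
has leading term of order $-(1-t)$ uniformly in $p\in L$ (since $1-|p|^2$ and $|1+tp_1|$ are uniformly bounded above and below), while the error $h(A_t(p))$ is $O((1-t)^{1+\alpha/2})$. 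Thus $\rho(A_t(p))<0$ for $t$ close to $1$, giving $A_t(p)\in\O$, i.e.\ $p\in\O_t$, which yields the Hausdorff convergence $\ov\O_t\to\ov{\mathbb{B}^N}$.

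The main technical burden lies in part (1), especially the bound on the $\alpha$-Hölder seminorm of $\nabla^2(h\circ A_t)$: one must verify that the $(1-t)^{1+\alpha/2}$-gain from the tangency comfortably dominates the $1/(1-t)$-loss from the rescaling prefactor, with a margin left for the Hölder piece. This margin is precisely what the hypothesis $\alpha>0$ provides, explaining why the $\CC^{2,\alpha}$-regularity assumption (as opposed to only $\CC^2$) is essential for this scaling approach.
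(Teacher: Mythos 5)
The paper does not prove this lemma but cites \cite[Section 2]{KNT} for it; your argument is the natural self‑contained version of that rescaling computation, and it is correct. The key points — the exact identity $|A_t(z)|^2-1=\frac{1-t^2}{|1+tz_1|^2}(|z|^2-1)$, the resulting rescaled defining function $\tilde\rho_t=|z|^2-1+e_t$, the bounds $|A_t(z)-e_1|\lesssim(1-t)^{1/2}$ and $\|D^jA_t\|\lesssim(1-t)^{1/2}$ on $K$ (valid since $|1+tz_1|$ is bounded below on $\{\re z_1\ge\beta>-1\}$), the gain $\|h\circ A_t\|_{\mathcal C^{2,\alpha}(K)}=O((1-t)^{1+\alpha/2})$ coming from the order‑$(2+\alpha)$ tangency at $e_1$, and the loss $O(1/(1-t))$ from the prefactor — all check out and combine to give $\|e_t\|_{\mathcal C^{2,\alpha}(K)}=O((1-t)^{\alpha/2})$, yielding (1); your part (2) argument using $\rho(A_t(p))\sim -(1-t)$ on compacts of $\mathbb B^N$ is likewise correct, under the standing normalization that $F_p(\O)\subset\mathbb B^N$ so that $\O_t\subset\mathbb B^N$.
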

	
	\begin{rem}\label{estimateofcoordinates}
		Let $x,y\in \O$ be close to $p$ and denote $\tilde x = A_t^{-1}(x),$ $\tilde y = A_t^{-1}(y)$. Note that $$A_t(\tilde x) - A_t(\tilde y) = \left( \frac{(1-t^2) (\tilde y^1 - \tilde x^1)}{(1 + t \tilde x^1)(1+t \tilde y^1)}, (1-t^2)^{1/2}\left( \frac{\tilde y'}{1 + t \tilde y^1} - \frac{\tilde x' }{1 + t\tilde x^1}\right) \right),
		$$ where $\tilde x=(\tilde x^1, \tilde x')$, $\tilde y=(\tilde y^1,\tilde y')\in \mathbb C^N$.
		
		If $\re \tilde x_1$ and $\re \tilde y_1$ are far from $-1$, e.g. they are bigger than $-1/2$, the following estimates hold
		\begin{equation}
			\|x_1 - y_1\| \sim (1-t^2) \|\tilde x_1 - \tilde y_1\| \:\:\: \text{and}\end{equation} $$ 	\|x'-y'\| \sim {(1-t^2)^{1/2}} {\|\tilde x' - \tilde y'\|} \:\:\: \text{if additionally} \:\:\:  \|\tilde x'-  \tilde y'\|\gtrsim\|\tilde x_1-\tilde y_1\| .$$
	\end{rem}

	\subsection{Complex geodesics}
	The behavior of complex geodesics in strongly pseudoconvex domains is well understood.
	
	\begin{lem}\label{complexgeodesics} \cite[Lemma 3]{KNT}
		Let $\Omega$ be a strongly pseudoconvex domain with $\mathcal{C}^{2,\alpha}$-smooth boundary.
		\begin{enumerate}
			\item Let $\epsilon'>0$, there exists a small enough $\epsilon>0$ such that if $z,w\in\Omega$ are points satisfying $\|z-w\|\leq \epsilon$, $\delta_\Omega(z)\leq \epsilon$, $\delta_\Omega(w)\leq \epsilon$,  $\frac{\|(z-w)_z\|}{\|z-w\|}\leq \epsilon$ then $z,w$ lie on the image of a complex geodesic $\varphi_{z,w}$ whose Euclidean diameter is smaller than $\epsilon'$.
			\item For $\epsilon>0$ small enough, if $z,w\in\Omega$ are points satisfying $\|z-w\|\leq \epsilon$, $\delta_\Omega(z)\leq \epsilon$, $\delta_\Omega(w)\leq \epsilon$,  $\frac{\|(z-w)_z\|}{\|z-w\|}\leq \epsilon$, then there exists a unique real geodesic joining $z$ to $w$. Furthermore, that real geodesic is induced from a complex one.
			\item There is a constant $C>0$ depending on $\O$ such that for any complex geodesic $\varphi:\Delta\rightarrow\O$ with the Euclidean diameter less than $\epsilon'$, we have $\frac{\|(x-y)_x\|}{\|x-y\|}\leq C \epsilon'$ for any $x\neq y \in \varphi(\Delta)$.
		\end{enumerate}
	\end{lem}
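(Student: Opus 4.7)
The plan is to reduce each statement to the case of the ball $\mathbb{B}^N$ using the Fridman-Ma transformation $F$ of Lemma~\ref{wemayassume} and the scaling maps $A_t$ of Lemma~\ref{convergencelemma}, together with Lempert's existence theorem, $\mathcal{C}^{1,\alpha}$-boundary regularity, and stability theory for complex geodesics in strongly pseudoconvex domains.

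For (3), I would first observe that every complex geodesic of $\Omega$ extends $\mathcal{C}^{1,\alpha}$-smoothly to $\overline\Delta$ and, being a stationary disc, satisfies $\varphi'(\zeta)\in T^{\mathbb{C}}_{\varphi(\zeta)}\partial\Omega$ for $\zeta\in\partial\Delta$. If $d_e(\varphi(\overline\Delta))\le\epsilon'$, fix $p\in\varphi(\partial\Delta)$; the $\mathcal{C}^{1,\alpha}$-regularity of $\partial\Omega$ implies that the complex tangent hyperplane at every $q\in\partial\Omega$ within distance $\epsilon'$ of $p$ differs from $T^{\mathbb{C}}_p\partial\Omega$ by $O(\epsilon')$. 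Hence the holomorphic function $h(\zeta):=\langle\varphi'(\zeta),\eta_p\rangle$ satisfies $|h(\zeta)|\lesssim\epsilon'\|\varphi'(\zeta)\|$ on $\partial\Delta$. Integrating $\varphi'$ along the segment in $\Delta$ joining $\varphi^{-1}(x)$ to $\varphi^{-1}(y)$, and using the maximum principle applied to $h$, then yields $|\langle x-y,\eta_p\rangle|\lesssim\epsilon'\|x-y\|$; replacing $\eta_p$ by $\eta_{\pi_\Omega(x)}$ introduces only an additional $O(\epsilon')\|x-y\|$ error from the same boundary regularity.

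For (1), Lempert's theorem produces a complex geodesic $\varphi_{z,w}$ through $z$ and $w$; the work is in the diameter bound. I would apply $F$ from Lemma~\ref{wemayassume} based at a boundary point near $z$, and then $A_t^{-1}$ with $t$ chosen so that $A_t^{-1}(F(z))$ lies at bounded distance from $\partial\mathbb{B}^N$. By \eqref{wds} and \eqref{wds1}, the smallness hypothesis on $\|(z-w)_z\|/\|z-w\|$ passes to the analogous quantity in the normalized coordinates, and by Lemma~\ref{convergencelemma} the transformed domain $\Omega_t$ converges to $\mathbb{B}^N$ in $\mathcal{C}^{2,\alpha}$ on the relevant region. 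Lempert's continuous dependence of complex geodesics on the domain and endpoints then shows that the geodesic of $\Omega_t$ joining $A_t^{-1}(F(z))$ and $A_t^{-1}(F(w))$ is close to an affine slice of $\mathbb{B}^N$. In $\mathbb{B}^N$ an explicit computation shows that such a slice $\zeta\mapsto a+\zeta v$, with $a\perp v$ and $\|a\|^2+\|v\|^2=1$, has Euclidean diameter $2\|v\|$ which is small precisely when the direction $v$ is near complex-tangent at $a/\|a\|$; scaling back via $A_t$ and $F^{-1}$ yields the desired bound on $d_e(\varphi_{z,w})$.

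For (2), once (1) is in place, reparametrizing $\varphi_{z,w}$ along the Poincar\'e geodesic of $\Delta$ joining $\varphi_{z,w}^{-1}(z)$ to $\varphi_{z,w}^{-1}(w)$ provides a real Kobayashi geodesic between $z$ and $w$. Uniqueness reduces to the ball case by the same scaling and stability argument: in $\mathbb{B}^N$ any two points lie on a unique complex geodesic, and the associated Poincar\'e reparametrization is the unique real geodesic between them; any other candidate real geodesic in $\Omega$ would transform under $A_t^{-1}\circ F$ to a curve that must be close, as $\epsilon\to 0$, to this unique ball geodesic, forcing it to coincide with the image of $\varphi_{z,w}$. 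The principal obstacle will be keeping all constants locally uniform in the base point, so that $\epsilon$ depends only on $\Omega$ and $\epsilon'$; this is precisely why the uniformity of the estimates in Lemma~\ref{wemayassume}, together with the uniform $\mathcal{C}^{2,\alpha}$-smoothness of $\partial\Omega$, is needed throughout.
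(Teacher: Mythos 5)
The paper does not prove this lemma at all: it is imported as \cite[Lemma 3]{KNT}, and immediately afterwards the authors trace its three parts to \cite[Theorem 1.1]{BFW}, \cite[Corollary 4]{KNT}, and \cite[Theorem 3]{KN} respectively. So your proposal is really a new proof sketch for previously established results, and it has to be judged on its own merits.

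The central problem is your argument for part (3). The claim that a stationary disc $\varphi$ satisfies $\varphi'(\zeta)\in T^{\mathbb C}_{\varphi(\zeta)}\partial\Omega$ for $\zeta\in\partial\Delta$ is false. What boundary membership of $\varphi(\partial\Delta)$ gives is only that the real tangent $i\zeta\varphi'(\zeta)$ lies in $T^{\mathbb R}_{\varphi(\zeta)}\partial\Omega$, i.e.\ that $\zeta\,\langle\varphi'(\zeta),\eta_{\varphi(\zeta)}\rangle$ is real; stationarity in fact forces it to be \emph{positive}, so in particular it is nonzero. Already for the model case of the ball, the affine geodesic $\varphi(\zeta)=a+\zeta v$ (with $a\perp v$, $\|a\|^2+\|v\|^2=1$) has $\langle\varphi'(\zeta),\eta_{\varphi(\zeta)}\rangle=\bar\zeta\|v\|^2\ne 0$. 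Thus the holomorphic function $h(\zeta)=\langle\varphi'(\zeta),\eta_p\rangle$ is \emph{not} $O(\epsilon')\|\varphi'(\zeta)\|$ on $\partial\Delta$ merely by boundary regularity of $\partial\Omega$ plus ``$\varphi'(\zeta)$ is complex-tangent''; the statement that the boundary normal component of $\varphi'(\zeta)$ is controlled by $d_e(\varphi)\|\varphi'(\zeta)\|$ is precisely the nontrivial content of \cite[Theorem 3]{KN} (and of the second estimate in Corollary 6 of the present paper). You have in effect assumed the conclusion of part (3) as a premise.

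A secondary issue concerns part (1): you invoke ``Lempert's theorem'' to get a complex geodesic through $z$ and $w$, but Lempert's theorem as such applies to convex domains; for a strongly pseudoconvex, non-convex $\Omega$ the existence of a complex geodesic through an arbitrary pair is not automatic and is essentially part of what (1) asserts. The standard remedy --- localize to a strictly convex piece $\Omega\cap U$, apply Lempert there, then argue that the resulting disc is a genuine geodesic of $\Omega$ --- needs the diameter bound to carry out the localization, so the order of quantifiers must be handled with care (a compactness/perturbation argument as in \cite{BFW}). Your scaling and $\mathcal C^{2,\alpha}$-convergence scheme is the right tool, but as written the existence step is circular. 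Part (2) is fine once (1) and the uniqueness of ball geodesics under $\mathcal C^{1,\alpha}$-stability are in place, but it inherits whatever holes remain in (1).
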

	The first part of the result above is essentially due to \cite[Theorem 1.1]{BFW} (see also \cite[Lemma 4.5]{BST} for an infinitesimal version of this result), the second part is given in \cite[Corollary 4]{KNT}, and the third part follows from \cite[Theorem 3]{KN}.
	
	\begin{rem} It follows from Lemma \ref{complexgeodesics} that if $z$ and $w$ are close to a boundary point $p$ and $z-w$ is close to a complex tangent at $p$, then they are contained in a complex geodesic, say $\varphi$, whose diameter must be small. In particular, after the transformations that were outlined in Lemma~\ref{wemayassume}, we may assume that the range of $\varphi$ is contained in $\O\cap U$, where $U$ is a sufficiently small neighborhood of $p$ such that $\O\cap U$ is strictly convex. This implies that  $\varphi$ is also complex geodesics in $\O\cap U$ and that $k_\O(z,w)= k_{\O\cap U}(z,w)$. In particular, we can confine our study to the case of strictly convex domains. Moreover, according to \cite[Remark 2]{KN}, for a suitably chosen $t\in(0,1)$ an analytic disc $A^{-1}_{t}(\varphi)$ is $\mathcal C^{1,\alpha}$-close to a complex geodesic of the ball that is contained in $\{0\}\times\mathbb{C}^{N-1}$. This reasoning allows us to effectively understand the behavior of $\varphi$.
	\end{rem}
	
	\begin{lemma}\label{lem:tan}
		Let $\O$ be a strongly pseudoconvex domain with $\mathcal C^{2,\alpha}$-smooth boundary and $\varphi:\Delta\rightarrow\Omega$ be a complex geodesic with a sufficiently small diameter. Take $x\in\varphi(\Delta)$ such that $\delta_\O(x)=\max_{\zeta \in \Delta}\delta_\O(\varphi(\zeta))=\|x-p\|$, where $p=\pi_\O(x)$ and reparametrize the complex geodesic so that $\varphi(0) = x$.  Then $\varphi'(0)\in T_p^{\mathbb C}(\partial \Omega)$.
	\end{lemma}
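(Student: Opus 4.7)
The plan is to interpret the maximality of $\delta_\O\circ\varphi$ at $\zeta=0$ as a first-order critical point condition, and then to read off complex tangency from the Cauchy--Riemann identity $\partial_t\varphi=i\,\partial_s\varphi$.

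First I would pass to the signed boundary distance $\tilde\delta_\O$ (negative inside $\O$, positive outside), which, by $\mathcal C^{2,\alpha}$-regularity of $\partial\O$, is $\mathcal C^{2,\alpha}$-smooth on a tubular neighborhood $U$ of $\partial\O$, with $\|\nabla\tilde\delta_\O\|\equiv 1$ and $\nabla\tilde\delta_\O(y)=\eta_{\pi_\O(y)}$ for every $y\in U$. Since $\varphi$ extends continuously to $\overline\Delta$ with $\varphi(\partial\Delta)\subset\partial\O$, the small-diameter assumption gives $\delta_\O(\varphi(\zeta))\leq d_e(\varphi(\Delta))$ at every $\zeta$, so the entire image $\varphi(\overline\Delta)$ sits inside $U$; in particular $\nabla\tilde\delta_\O(x)=\eta_p$ makes sense.

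Then I would look at $u(\zeta):=\tilde\delta_\O(\varphi(\zeta))=-\delta_\O(\varphi(\zeta))$. By hypothesis this real-valued $\mathcal C^1$ function attains its minimum on the open disc $\Delta$ at the interior point $\zeta=0$, whence $\nabla u(0)=0$. Writing $\zeta=s+it$ and using holomorphicity in the form $\partial_s\varphi(0)=\varphi'(0)$, $\partial_t\varphi(0)=i\,\varphi'(0)$, the chain rule applied to the real-valued $\tilde\delta_\O$ at $x$ yields
\[
\partial_s u(0)=\re\langle\varphi'(0),\eta_p\rangle,\qquad \partial_t u(0)=\re\langle i\varphi'(0),\eta_p\rangle=-\im\langle\varphi'(0),\eta_p\rangle.
\]
Both vanishing is precisely the relation $\langle\varphi'(0),\eta_p\rangle=0$, i.e.\ $\varphi'(0)\in T_p^{\mathbb C}(\partial\O)$.

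This argument amounts essentially to a one-line observation once the correct framework is set up, so there is no substantive obstacle; the only step that needs a word of justification is the initial verification that $x$ (and in fact all of $\varphi(\overline\Delta)$) sits in the tube where $\tilde\delta_\O$ is smooth and where $\pi_\O$ is single-valued, which is guaranteed by the small-diameter hypothesis as explained above. The rest is the chain rule together with the Cauchy--Riemann identity.
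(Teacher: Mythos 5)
Your proof is correct, and it rests on the same observation as the paper's, namely first-order criticality of $\delta_\O\circ\varphi$ at the interior maximum $\zeta=0$. The paper implements it as a contradiction in normalized coordinates (rotating the disc so that $\varphi'(0)_1$ is negative real and then checking that $\delta_\O\circ\varphi$ increases along a ray from $0$), whereas you read off $\langle\varphi'(0),\eta_p\rangle=0$ directly from $\nabla(\tilde\delta_\O\circ\varphi)(0)=0$ via the chain rule and the Cauchy--Riemann identity, which is a cleaner, coordinate-free phrasing of the same idea.
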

	
	\begin{proof}
		By a rotation and translation we may assume that $x=(-s,0,\ldots ,0)$ and $\pi(x):=\pi_\O(x)=(0,\ldots,0)$. It is well-known that the map $\pi_\O$ is $\mathcal{C}^1$-smooth. Furthermore we have $\pi(\varphi(0))=(0,\ldots ,0)$ and $\re(\pi_1(z))=\mathcal{O}(\|z\|^2)$, where $\pi_1$ denotes the first component of $\pi$.
		
		To prove the assertion we need to show that $\varphi'(0)\in \{0\} \times \mathbb{C}^{N-1}.$ We proceed by a contradiction. Let $\varphi'(0):=\eta=(\eta_1,...,\eta_N)$ with $\eta_1\neq 0$. Then $\varphi(\lambda)=(-s,0,...,0)+\lambda\eta+\mathcal{O}(\lambda^2)$. Composing with a rotation of the disc, we may assume that $\eta_1<0$. Then for $t\in\Delta$ with $t>0$ we see that 	
		$$\delta_{\Omega}(\varphi(t))=\|\varphi(t)-\pi(\varphi(t))\|\geq \re(\varphi(t)_1)-\re(\pi(\varphi(t))_1)\geq s-\eta_1 t + \mathcal{O}(t^2) > s = \delta_\Omega(x) $$ which contradicts the extremality of $x\in\varphi(\Delta)$.
	\end{proof}

	\section{Proofs of Propositions 3 and 4}\label{sectiongeometricproofs}
	
	\begin{proof}[Proof of Proposition~\ref{quantitativevisibility}]

		The key role in our approach is played by a Gehring-Hayman type inequality for strongly pseudoconvex domains:
		\begin{lem}\label{kosinskilemma}\cite[Theorem 1]{KNT}
			Let $\O$ be a strongly pseudoconvex domain with  $\mathcal{C}^{2,\alpha}$-smooth boundary. Then, for any $z,w\in\Omega$ and any Kobayashi geodesic $\gamma_{z,w}:I \to \O$ joining $z$ to $w$, with constants depending only on $\O$ we have the following length estimate
			\begin{equation}
				l(\gamma_{z,w}) \sim \|z-w\|.
			\end{equation}
		\end{lem}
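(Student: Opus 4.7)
The lower bound $l(\gamma_{z,w}) \geq \|z-w\|$ is trivial, since any rectifiable arc joining $z$ and $w$ has length at least the Euclidean chord. For the upper bound $l(\gamma_{z,w}) \lesssim \|z-w\|$, when $z$ and $w$ stay in a fixed compact subset of $\Omega$ the Kobayashi--Royden pseudometric is uniformly comparable to the Euclidean norm, and integration against the Kobayashi arc-length parametrization yields the estimate immediately. The nontrivial case is therefore when $z$ and $w$ are both close to a common boundary point.

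In that regime my plan is to run the localization-scaling machinery already prepared in Section~\ref{transformations}. Pick $z_* \in \gamma_{z,w}$ realizing $s := D_\Omega(\gamma_{z,w}) = \delta_\Omega(z_*)$, set $p := \pi_\Omega(z_*)$, apply the Fridman--Ma transformation $F = F_p$ from Lemma~\ref{wemayassume}, and then the dilation $A_{1-s}^{-1}$. By Lemma~\ref{convergencelemma} the resulting domain $\Omega_{1-s}$ is $\mathcal{C}^{2,\alpha}$-close to $\mathbb{B}^N$ on a neighborhood of the rescaled image of $\gamma$; the point $\tilde z_* := A_{1-s}^{-1}(F(z_*))$ now lies at bounded depth; and $\tilde \gamma := A_{1-s}^{-1} \circ F \circ \gamma_{z,w}$ is a Kobayashi geodesic of $\Omega_{1-s}$. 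By stability of the Kobayashi--Royden pseudometric under $\mathcal{C}^{2,\alpha}$-perturbation of the domain, $\tilde\gamma$ stays close to a Kobayashi geodesic of $\mathbb{B}^N$, where the bound $l(\tilde\gamma) \sim \|\tilde z - \tilde w\|$ is a direct computation (ball geodesics are arcs of circles orthogonal to $\partial\mathbb{B}^N$).

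The remaining task is to transport the estimate back through $A_{1-s}$ and $F$. The map $F$ distorts Euclidean and boundary distances only by bounded factors, so the real work lies in $A_{1-s}$. By Remark~\ref{estimateofcoordinates}, this dilation scales normal separations by $1-(1-s)^2 \sim s$ and tangential separations by $(1-(1-s)^2)^{1/2} \sim s^{1/2}$, so a priori the ratio $l/\|z-w\|$ could pick up a loss factor $s^{-1/2}$. The main obstacle is controlling this anisotropy: one needs to exploit that at the deepest point of the geodesic the tangent vector is forced to be tangent to the level set $\{\delta_\Omega=s\}$, i.e.\ (real) tangential to $\partial\Omega$, so near $z_*$ both the Euclidean arclength element of $\gamma$ and the chord $\|z-w\|$ are carried by the same tangential scaling factor $s^{1/2}$, and their comparability persists after unscaling. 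When $z$ and $w$ sit at very different depths, a single dilation cannot normalize the whole geodesic, and one should split $\gamma_{z,w}$ at a chain of intermediate deepest points of successive subarcs, scale each subarc with its own depth parameter, and sum the resulting estimates along a geometric-type series in the depths to conclude.
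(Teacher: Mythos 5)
The paper does not prove this lemma; it imports it verbatim as Theorem 1 of [KNT], so there is no in-text proof here to compare against. Assessing your sketch on its own terms: the lower bound and the interior compact case are fine, but the boundary case has two genuine gaps. The appeal to ``stability of the Kobayashi--Royden pseudometric under $\mathcal C^{2,\alpha}$-perturbation'' to conclude that $\tilde\gamma := A_{1-s}^{-1}\circ F\circ\gamma_{z,w}$ shadows a geodesic of $\mathbb B^N$ presupposes that $\tilde\gamma$ remains inside a fixed bounded region, since Lemma~\ref{convergencelemma} only gives convergence $\Omega_t\to\mathbb B^N$ on half-spaces $\{\re z_1>\beta\}$. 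Because $A_{1-s}^{-1}$ expands tangential directions by $s^{-1/2}$, such pre-compactness is essentially the bound $l(\gamma_{z,w})\lesssim D_\Omega(\gamma_{z,w})^{1/2}$, which together with the trivial $\|z-w\|\le l(\gamma_{z,w})$ is exactly Proposition~\ref{quantitativevisibility}; in the paper's logical order that proposition is deduced \emph{from} the present lemma, not available before it. Without an independent argument that the rescaled geodesic does not escape, the rescaling step is circular.

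Second, the closing chaining does not yield the desired bound. Decomposing $\gamma_{z,w}$ into subarcs $\gamma_i$ from $z_i$ to $z_{i+1}$ and proving $l(\gamma_i)\lesssim\|z_i-z_{i+1}\|$ gives, upon summing, only $l(\gamma_{z,w})\lesssim\sum_i\|z_i-z_{i+1}\|$; the triangle inequality runs the \emph{wrong} way, as the right-hand side is $\ge\|z-w\|$ rather than $\lesssim\|z-w\|$. The assertion that the contributions form a ``geometric-type series'' is precisely where the substance of the theorem lies and it is stated rather than argued --- nor is it evidently easier than the statement being proved. Relatedly, the observation that $\gamma'$ is tangential to $\{\delta_\Omega=s\}$ at the deepest point $z_*$ is a single pointwise fact; it does not control the anisotropic distortion of $A_{1-s}^{-1}$ along the full arc, where the tangent direction generically rotates toward the normal as $\gamma$ moves away from $z_*$.
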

		Therefore, in order to prove Proposition \ref{quantitativevisibility} it is enough to show that there exists a constant $c>0$ such that for any $z,w\in \Omega$ and a Kobayashi geodesic $\gamma_{z,w}$ joining $z$ to $w$, the inequality
		\begin{equation}\label{wtp}  D^\frac{1}{2}_\O(\gamma_{z,w})\geq c\|z-w\| \end{equation}
		holds.
		
		It is well known that any strongly pseudoconvex domain satisfies visibility property with respect to Kobayashi geodesics. Hence \eqref{wtp} requires a proof only if points $z$ and $w$ are close to the same point in the boundary. Take sequences $z_n,w_n$ converging to $p\in\partial \Omega$. We need to show that \eqref{wtp} holds for $z_n$, $w_n$ if $n$ is sufficiently big.
		
		We shall split the proof of Proposition \ref{quantitativevisibility} into two different cases, depending on whether $z_n,w_n$ approach $p$ complex transversally or tangentially.
		
		\smallskip
		
		\textit{Case I.} $z_n,w_n$ approach $p\in\partial \Omega$ complex transversally, i.e. there exists $\alpha>0$ such that
		\begin{equation}\label{eq:trans} \dfrac{\|(z_n-w_n)_p\|}{\|z_n-w_n\|}\geq \alpha .
		\end{equation}
		
		We shall construct particular quasi-geodesics joining $z_n$ with $w_n$. Recall here that a quasi-geodesics is a quasi-isometric embedding of a interval into a metric space and need not be continuous.
		
		Let $\hat{z}_n=\pi_\O(z_n),\hat{w}_n=\pi_\O(w_n)$ be the unique points on $\partial\Omega$ that are closest to $z_n,w_n$, respectively (look at tails of sequences, if necessary). Let $\eta_{z_n},\eta_{w_n}$ denote the inner unit normal vectors to $\partial\Omega$ at $\hat{z}_n,\hat{w}_n$ respectively. Let, moreover, $\sigma_{z_n}:[0,t_n]\to\Omega$ and $\sigma_{w_n}:[0,t_n)\to\Omega$ denote the curves defined by $\sigma_{z_n}(t)=z_n+ t \eta_{z_n},$ $\sigma_{w_n}(t)=w_n+ t \eta_{w_n}$, where $t_n:=\alpha\| z_n - w_n \|/10$.
		

We will first show that $\sigma_{z_n}$ and $\sigma_{w_n}$ are $(2, \epsilon)$-geodesics for some $\epsilon > 0$. Without loss of generality, we will show it for $\sigma_{z_n}$.
		
		As $\O$ is strongly pseudoconvex \begin{equation}\label{strpscestimate}
			k_\O(z,w)\geq \dfrac{1}{2}\left|\log\left(\dfrac{\delta_\Omega(z)}{\delta_\Omega(w)}\right)\right|-c, \:\:\: \: \: z,w\in\O,
		\end{equation}
		for some uniform $c>0$ (see also \cite[Theorem 20]{NO} for an extension of this estimate).
		Moreover, \begin{equation}\label{infinitesimalestimate}\kappa_\Omega(z;v) \leq \delta^{-1}_\Omega(z;v), \:\:\:\:\: z\in \O, \: v\in\Cn, \: v\neq 0. \end{equation}
		It then follows from \eqref{strpscestimate} and \eqref{infinitesimalestimate} that
		$$ \lk_\O(\sigma_{z_n}|_{[\tau_1,\tau_2]})=\int_{\tau_1}^{\tau_2}\kappa_\Omega(\sigma_{z_n}(t);\sigma'_{z_n}(t))dt = \log\left(\dfrac{\delta_\Omega(\sigma_{z_n}(\tau_2))}{\delta_\Omega(\sigma_{z_n}(\tau_1))}\right) \leq 2 k_\O(\sigma_{z_n}(\tau_1),\sigma_{z_n}(\tau_2))+2c $$ so $\sigma_{z_n}$ is actually a $(2,2c)$-geodesic.
		
		We will now show that $\sigma_n:=\sigma_{z_n}\cup\sigma_{w_n}$ is a quasi-geodesic. As we recalled before, a quasi-geodesics need not be continuous. Here we parametrize the curves so that $\sigma_n$ is defined on $I_n:=[0,t_n]\cup (t_n, 2t_n]$ begins at $z_n$ and ends at $w_n$, more explicitly let $\sigma_n(t):=\sigma_{z_n}(t)$ if $t\in[0,t_n]$ and $\sigma_n(t):=\sigma_{w_n}(2t_n-t)$ if $t\in(t_n,2t_n]$. 
		
		Let us consider two points on the curve $\sigma_n$ that are of the form $\sigma_{z_n}(\tau_1),\sigma_{w_n}(\tau_2)$. Integrating as above we get
		\begin{multline}\label{upperboundtolength}
			\lk_\O(\sigma_n|_{[\tau_1,\tau_2]})=\lk_\O(\sigma_{z_n}|_{[\tau_1,t_n]})+\lk_\O(\sigma_{w_n}|_{(t_n, \tau_2]})\leq \\\log\left(1+\dfrac{\|z_n-w_n\|}{\delta_\Omega(\sigma_{z_n}(\tau_1))}\right) + \log\left(1+\dfrac{\|z_n-w_n\|}{\delta_\Omega(\sigma_{w_n}(\tau_2))}\right) .
		\end{multline}
		It follows from the conditions put on the curves, the definition of $t_n$, and \eqref{eq:trans}, that $$\frac{\|(\sigma_{z_n}(\tau_1)-\sigma_{w_n}(\tau_2))_p\|}{\|\sigma_{z_n}(\tau_1)-\sigma_{w_n}(\tau_2)\|}\geq \alpha'$$ for some uniform $\alpha'>0$. By \cite[Proposition 2]{KN} we deduce that
		\begin{multline}\label{lowerboundtolength}
			k_\O(\sigma_{z_n}(\tau_1),\sigma_{w_n}(\tau_2)) \geq \log\left(1+\dfrac{c\|(\sigma_{z_n}(\tau_1)-\sigma_{w_n}(\tau_2))_p\|}{\delta^{1/2}_\Omega(\sigma_{z_n}(\tau_1))\delta^{1/2}_\Omega(\sigma_{w_n}(\tau_2))}\right) \geq \\\log\left(1+\dfrac{c'\|z_n-w_n\|}{\delta^{1/2}_\Omega(\sigma_{z_n}(\tau_1))\delta^{1/2}_\Omega(\sigma_{w_n}(\tau_2))}\right).	
		\end{multline}
		The last inequality above follows from
		$ \|(\sigma_{z_n}(\tau_1)-\sigma_{w_n}(\tau_2))_p\| \gtrsim \|(z_n-w_n)_p\| \gtrsim \|z_n-w_n\| .$
		Then \eqref{upperboundtolength} in combination with \eqref{lowerboundtolength} provides us with $\epsilon > 0$ such that	
		$$ \lk_\O(\sigma_{n}|_{[\tau_1,\tau_2]}) \leq 2 k_\O(\sigma_{z_n}(\tau_1),\sigma_{w_n}(\tau_2))) + \epsilon .$$
		This shows that $\sigma_{n}$ are $(2,\epsilon')$-geodesics, where $\epsilon'=\max\{\epsilon,2c\}$. Note also that $D_\O(\sigma_{n}) \geq \alpha\|z_n-w_n\|/10.$
		
		As $\Omega$ is strongly pseudoconvex, it is Gromov hyperbolic with respect to the Kobayashi distance (see \cite[Theorem 1.4]{BB} for the original argument or combine \cite[Theorem 1.1]{Z} and \cite[Theorem 1.2]{BGNT} for an alternate approach). By the geodesic stability lemma \cite[Theorem 1.7, page 401]{BH} one can deduce that the curves $\sigma_{n}$ and geodesics $\gamma_{z_n,w_n}$ are close to each other in the Hausdorff topology. That is to say, it holds for all $n$ that there exists a constant $H>0$ with \begin{equation}\label{gsl} \max\{k_\Omega(\sigma_n(t), \gamma_{z_n,w_n}(I'_n)),k_\Omega(\gamma_{z_n,w_n}(t'),\sigma_{n}(I_n)):t\in I_n,t'\in I'_n\} \leq H.\end{equation} Above  $I_n'$ denotes the interval $\gamma_{z_n, w_n}$ as defined on and $k_\O (z, A):=\inf k_\O (z,a)$, where the infimum is taken over all $a\in A$.
		Let $x_n$ be a point in $\sigma_n(I_n)$ such that $\delta_\Omega(x_n):=D_\Omega(\sigma_n)$. It follows from \eqref{gsl} that there exists $y_n\in \gamma_{z_n,w_n}(I'_n)$ such that $k_\O(x_n,y_n)\leq H$. By \eqref{strpscestimate}
		$$ \dfrac{1}{2}\log\left(\dfrac{\delta_\Omega(x_n)}{\delta_\Omega(y_n)}\right) \leq H+c.$$
		Consequently
		$$ D_\O(\gamma_{z_n,w_n}) \geq \delta_\Omega(y_n) \geq C \delta_\Omega(x_n) \geq C' \|z_n-w_n\|,$$
		where the constant $C'>0$ is uniform. This shows that \eqref{wtp} holds for the sequences $z_n,w_n$. As those were arbitrary sequences, by a compactness argument we are done. 	
		
		\smallskip
		
		\textit{Case II. } Points $z_n,w_n$ tend to $p\in\partial \Omega$ tangentially, i.e. $$\dfrac{\|(z_n-w_n)_p\|}{\|z_n-w_n\|}\rightarrow 0.$$
		
		We start the proof of this case by noting that the first part of the Lemma \ref{complexgeodesics} implies that provided that $n$ is large enough both $z_n$ and $w_n$ are contained in a complex geodesic tending to $p$ (precisely, $\varphi_n(\overline \Delta)\rightarrow p\in\partial \Omega$ in the Hausdorff topology), which we shall denote by $\varphi_n$. Choose $x_n \in\varphi_n(\Delta)$ such that $\delta_\O(x_n)= \max_{\zeta \in \Delta}\delta_{\O}(\varphi_n(\zeta))=:s_n$ and parametrize $\varphi_n$ so that $\varphi_n(0)=x_n$. Clearly, $s_n$ tends to zero since the diameters of $\varphi_n$ do. Moreover, the derivatives $\varphi_n'(0)$ satisfy the assertion of Lemma~\ref{lem:tan}. The transformation from Lemma \ref{wemayassume} allows us to assume that $p=(1,0,\ldots,0)$ and $x_n=(1-s_n,0,\ldots,0)$. We use the same letter to denote push-forwards of $\varphi_n$ by this transformation. After the transformation the assertion of Lemma~\ref{lem:tan} of $\varphi_n'(0)$ is disturbed, however, thanks to \eqref{wds1} we can still control it:
		\begin{equation}\label{initialgeodesicisvertical}
			|(\varphi_n'(0))_1| \lesssim s_n \|\varphi'_n(0) \|. \end{equation}
		
		The following claim is about the complex geodesics $\varphi_n$ and will be also used in the sequel:
		\begin{claim}\label{claimabouttangentialconvergence}
			Passing to a subsequence, there exists $t_n\in(0,1)$ with $1-t_n=s_n+o(s_n)$ such that the complex geodesics $A^{-1}_{t_n}\circ \varphi_n$ tend in $\mathcal{C}^{1,\alpha}$-topology to a complex geodesic of the unit ball, contained in $\{0\}\times \mathbb{C}^{N-1}$.  Furthermore, $A^{-1}_{t_n}(x_n)$ tends to the origin.
		\end{claim}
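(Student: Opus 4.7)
The plan is to take $t_n := 1 - s_n$ so that $A^{-1}_{t_n}$ sends $x_n$ to the origin exactly, then use normal-family compactness, the vector Schwarz lemma, and the derivative bound \eqref{initialgeodesicisvertical} to identify the limit as a linear disc in $\{0\}\times\mathbb{C}^{N-1}$; the upgrade to $\mathcal{C}^{1,\alpha}$-convergence rests on Lempert--Huang regularity.

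With $t_n = 1-s_n$, a direct computation gives $m_{-t_n}(1-s_n) = ((1-s_n) - (1-s_n))/(1 - (1-s_n)^2) = 0$, so $A^{-1}_{t_n}(x_n) = 0$ (stronger than merely tending to $0$) and $1 - t_n = s_n = s_n + o(s_n)$. Set $\psi_n := A^{-1}_{t_n}\circ\varphi_n$; by biholomorphic invariance of $\kappa$, each $\psi_n$ is a complex geodesic of $\Omega_{t_n} := A^{-1}_{t_n}(\O)$ with $\psi_n(0) = 0$ and $\kappa_{\Omega_{t_n}}(0;\psi_n'(0)) = 1$. Lemma~\ref{convergencelemma} guarantees that $\Omega_{t_n}$ is uniformly bounded and converges to $\mathbb{B}^N$ in the Hausdorff sense, as well as $\mathcal{C}^{2,\alpha}$-smoothly on any half-space $\{\re z_1 > \beta\}$ with $\beta > -1$.

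The family $\{\psi_n\}$ being uniformly bounded on $\Delta$, Montel's theorem yields a subsequence converging locally uniformly to some $\psi : \Delta \to \overline{\mathbb{B}^N}$ with $\psi(0) = 0$. Standard continuity of Kobayashi--Royden metrics at a deep interior point gives $\kappa_{\Omega_{t_n}}(0;\cdot)\to\|\cdot\|$ uniformly on the unit sphere, so $\|\psi_n'(0)\| \to 1$, hence $\|\psi'(0)\| = 1$. Subharmonicity of $\|\psi\|^2$ combined with $\psi(0) = 0$ forces $\psi(\Delta) \subset \mathbb{B}^N$, and the vector Schwarz lemma then yields $\psi(\lambda) = \lambda u$ with $u := \psi'(0)$, $\|u\| = 1$ --- a complex geodesic of $\mathbb{B}^N$ through the origin.

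To check $u \in \{0\} \times \mathbb{C}^{N-1}$, differentiate $A_{-t_n}$ at $x_n = (1-s_n, 0, \ldots, 0)$: with $t_n = 1-s_n$, the Jacobian is diagonal, with first entry $1/(s_n(2-s_n))$ and the remaining $N-1$ entries $1/\sqrt{s_n(2-s_n)}$. Writing $\varphi_n'(0) = (v_1, v')$, we obtain $(\psi_n'(0))_1 = v_1/(s_n(2-s_n))$ and $\|(\psi_n'(0))'\|^2 = \|v'\|^2/(s_n(2-s_n))$. Boundedness of $\|\psi_n'(0)\|$ forces $\|v'\| = O(\sqrt{s_n})$, and then \eqref{initialgeodesicisvertical} gives $|v_1| \lesssim s_n\|\varphi_n'(0)\| = O(s_n^{3/2})$; hence $(\psi_n'(0))_1 = O(\sqrt{s_n}) \to 0$, so $u_1 = 0$. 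The main obstacle is the final $\mathcal{C}^{1,\alpha}$-upgrade: uniform $\mathcal{C}^{1,\alpha}(\overline\Delta)$-bounds for the $\psi_n$ follow from Lempert's and Huang's stability/regularity theory for extremal discs applied to the smoothly-varying family $\Omega_{t_n} \to \mathbb{B}^N$ (this is precisely the content invoked in \cite[Remark 2]{KN}), and then Arzel\`a--Ascoli in $\mathcal{C}^1(\overline\Delta)$ promotes the locally uniform Montel limit to a $\mathcal{C}^{1,\alpha}$-limit on $\overline\Delta$.
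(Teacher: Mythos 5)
Your proof is correct, but it takes a genuinely different (and in some ways more direct) route than the paper's. The paper chooses $t_n$ as the smallest value for which $A^{-1}_{t_n}\circ\varphi_n(\overline\Delta)$ meets $\{\re z_1=0\}$, then \emph{derives} $1-t_n=s_n+o(s_n)$ and $A^{-1}_{t_n}(x_n)\to 0$ as consequences of the limit, and identifies the limiting disc via projective convergence of the tangent directions $[(A^{-1}_{t_n}\circ\varphi_n)'(0)]$ together with the constraint that the limit range meets $\{\re z_1=0\}$. You instead set $t_n=1-s_n$ directly, which makes $A^{-1}_{t_n}(x_n)=0$ and $1-t_n=s_n$ exactly, and you pin down the limit by Montel, Kobayashi--Royden continuity at the origin, the vector Schwarz lemma in $\mathbb B^N$, and an explicit diagonal Jacobian computation of $A_{-t_n}$ at $x_n$ combined with \eqref{initialgeodesicisvertical} to kill the $e_1$-component. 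Both you and the paper hand off the actual $\mathcal C^{1,\alpha}$-convergence to the Lempert--Huang stability result invoked in \cite[Remark 2]{KN}, so the core analytic input is the same. One minor imprecision worth flagging: uniform $\mathcal C^{1,\alpha}(\overline\Delta)$-bounds plus Arzel\`a--Ascoli only yield $\mathcal C^{1,\beta}$-convergence for $\beta<\alpha$ (with the limit in $\mathcal C^{1,\alpha}$), so the genuine $\mathcal C^{1,\alpha}$-convergence must come from the cited stability theorem itself rather than a compactness upgrade; this is cosmetic given that the paper leans on \cite[Remark 2]{KN} at exactly the same point. Overall your choice of $t_n$ is simpler for the Claim as stated, while the paper's more elaborate choice is tailored to dovetail with the later computations in the proof of Proposition~\ref{quantitativevisibility}.
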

		
		\begin{proof}[Proof of Claim \ref{claimabouttangentialconvergence}]
			Choose $t_n\in[0,1-s_n]$ to be the smallest number so that the range $A^{-1}_{t_n}\circ \varphi_n(\overline \Delta)$ intersects $\{\re z_1 = 0 \}$ and set $$\tilde \varphi_n:=A^{-1}_{t_n}\circ\varphi_n.$$
						
			As the diameters of $\varphi_n$ tend to zero, the sequence $(t_n)$ converges to $1$. Then, by Lemma \ref{convergencelemma} and \cite[Remark 2]{KN}, a subsequence of $(\tilde \varphi_n)$ tends to a complex geodesic of the ball. Recall that (see for instance \cite[Example 16.1.1, page 592]{JP}) images of complex geodesics of the unit ball $\mathbb{B}^N$ are the intersection of $\mathbb B^N$ with complex affine lines. The estimate in \eqref{initialgeodesicisvertical} together with elementary calculations shows that for any $t\in[0,1-s_n]$ one has
			$ {\|(A^{-1}_t\circ\varphi'_n(0))_1\|}/{\|(A^{-1}_t\circ\varphi'_n(0))'\|} \lesssim s^{1/2}_n.$ In particular, $[(A^{-1}_{t_n}\circ \varphi_n)'(0)]$ tends in the projective space $\mathbb P(\mathbb C^N)$ to a point of the form $[0:z_2:\ldots:z_N]$.
			The range of $\tilde \varphi_n$ intersects $\{\re z_1 = 0\}$, so $\tilde{\varphi}_n$ tends to a geodesic in $\mathbb B^N$ that is contained in $\{\re z_1 = 0\}\times \mathbb{C}^{N-1}$. As the real part of the first coordinate of $A^{-1}_{t_n}(x_n)$ tends to zero, easy computations give that $1-t_n = s_n + o(s_n)$, whence $\|A^{-1}_{t_n}(x_n)\|=\|\tilde \varphi_n(0)\| = o(s_n)$. This gives the assertion.
		\end{proof}
		
		Let us denote the limit of geodesics that appears in Claim \ref{claimabouttangentialconvergence} by $\psi$.
		
		\begin{rem}\label{remarkaboutdilationofdistances}
			Observe that Remark~\ref{estimateofcoordinates}, the third part of Lemma \ref{complexgeodesics}, and the $\mathcal{C}^{1,\alpha}$-convergence of $\tilde{\varphi}_n$ to $\psi$ show the following
			$$
			\|\tilde z_n - \tilde w_n\|\sim \| \tilde z_n'-\tilde w_n'\| \sim  \dfrac{\|z_n'-w_n'\|}{(1-t^2_n)^{1/2}} \sim \dfrac{\|z_n-w_n\|}{(1-t^2_n)^{1/2}} .$$
		\end{rem}
		
		\medskip
		
		We are coming back to the proof of Case 2. The second part of Lemma \ref{complexgeodesics} implies that there exists a unique real geodesic $\gamma_n:I_n\rightarrow\O$ joining $z_n$ to $w_n$ and its image is contained in the image of $\varphi_n$. Set $\tilde \gamma_n=A^{-1}_{t_n}\circ \gamma_n$.
		
		Recall that the real geodesics of the unit disc $\Delta$ are arcs of circles perpendicular to the unit circle, thus the maximum boundary distance of a real geodesic $\gamma$ on $\psi$ is larger than the Euclidean distance of its endpoints up to a multiplicative constant. In particular, when the complex geodesic intersects a fixed compact set of $\mathbb B^N$, for any real geodesic $\gamma$ joining $x$ with $y$ there exists $u$ lying in the range of $\gamma$ satisfying $\delta_{\mathbb{B}^N}(u)\gtrsim \|x-y\|.$ Then, thanks to Claim \ref{claimabouttangentialconvergence}, $\mathcal C^{1,\alpha}$-convergence of $\tilde\varphi_n$ to $\psi$ implies that for large $n$ there exists $ \tilde u_n\in\tilde\gamma_n(I_n)$ such that
		$ \delta_{\Omega_n}(u_n)\gtrsim \|\tilde z_n-\tilde w_n\|$. In particular, for any $\tilde{q}\in\partial \Omega_{n}$ one has \begin{equation}\label{allptsinbdry}
			\|\tilde{u}_n-\tilde{q}\|\gtrsim \|\tilde z_n - \tilde w_n\|.\end{equation} 	
		Let $q_n:=A_{t_n}(\tilde{q}_n)$ be the closest point on $\partial \Omega$ to $u_n:=A_{t_n}(\tilde{u}_n)$ .
		
		We claim that
		\begin{equation}\label{lowerbound}
			D_\O(\gamma_n) \gtrsim (1-t_n^2)\|\tilde z_n - \tilde w_n\|.\end{equation}
		To see this we distinguish two cases (passing to a subsequence, if necessary). The first one is that $\|\tilde u_n -\tilde q_n \| \sim \|\tilde u_n'- \tilde q_n'\|\gtrsim\|\tilde u_n^1-\tilde q_n^1\|$. Here and in the sequel we use superscripts to denote the coordinates, while subscripts are left for indices. It follows from \eqref{allptsinbdry} and Remark~\ref{estimateofcoordinates} that \begin{multline*}
			D_\O(\gamma_n)=\max\{\delta_\Omega(\gamma_n(t)):t\in I_n\}  \geq \delta_\O(u_n) = \|u_n-q_n\| \gtrsim \|u_n'-q_n'\| \gtrsim \\
			{(1-t^2_n)^{1/2}} \|\tilde{u}'_n-\tilde{q}_n'\| \gtrsim   (1-t_n^2)\|\tilde{u}_n-\tilde{q}_n\| \gtrsim (1-t_n^2)\|\tilde z_n - \tilde w_n\|. \end{multline*}
		
		If the second case holds, i.e. $\|\tilde u_n -\tilde q_n \| \sim \|\tilde u_n^1-\tilde q_n^1\|\gtrsim\|\tilde u_n'- \tilde q_n'\|$, then it again follows from \eqref{allptsinbdry} and Remark~\ref{estimateofcoordinates} that \begin{multline*}
			D_\O(\gamma_n)=\max\{\delta_\Omega(\gamma_n(t)):t\in I_n\} = \|u_n-q_n\| \gtrsim \|u_n^1-q_n^1\| \gtrsim \\
			(1-t_n^2) \|\tilde{u}^1_n-\tilde{q}^1_n\| \gtrsim   (1-t_n^2)\|\tilde{u}_n-\tilde{q}_n\| \gtrsim (1-t_n^2)\|\tilde z_n - \tilde w_n\|. \end{multline*}

		Now \eqref{lowerbound} implies that $D_\O(\gamma_n) \gtrsim (1-t_n^2)\| \tilde z_n - \tilde w_n\|^2$ and this, together with Remark \ref{remarkaboutdilationofdistances}, implies that \eqref{wtp} holds for the sequences $z_n,w_n$. As those were arbitrary, Proposition \ref{quantitativevisibility} follows. \end{proof}
	
	\begin{proof}[Proof of Proposition \ref{themissingpart}]
		Take $\varphi:\Delta\rightarrow \O$ as in the assumptions. We will divide the proof into two different cases.
		
		\textit{Case I.} The diameter of $\varphi$ is bounded below.
		
		If $d_e(\varphi)$ is bounded below, due to the visibility property of strongly pseudoconvex domains (see for instance \cite[Theorem 1.4]{BZ}), it follows that $\varphi(\Delta)$ intersects a fixed compact set. In this case, the first asymptotic equality in~\eqref{nottrivial} follows \cite[Proposition 4]{CHL} and the second asymptotic equality there is a consequence of \cite[Proposition 1]{H}.
		
		\textit{Case II.} The diameter of $\varphi$ tends to zero.
		
		To see the first asymptotic equality in \eqref{nottrivial} when the diameter of $\varphi$ is small (and consequently by Lemma \ref{complexgeodesics}, $\varphi$ is close to a boundary point and in tangential position), one may proceed as in the proof of \cite[Proposition 7]{KN}, by applying the \textit{tangential case} of Proposition \ref{quantitativevisibility} instead of \cite[Theorem 8]{NO}.
		
		Let us focus on the second asymptotic equality. It is clear that $ \max_{z\in\Delta}\|\varphi'(z)\| \gtrsim  d_e(\varphi(\Delta)).$ Thus, what matters is the following estimate \begin{equation} d_e(\varphi (\Delta)) \gtrsim \max_{z\in\Delta}\|\varphi'(z)\| .\label{finalstepforprofnikolovsdream} \end{equation}

		In order to get a contradiction, we suppose that the uniformity in estimate \eqref{finalstepforprofnikolovsdream} fails for a sequence of complex geodesics $\varphi_n:\Delta\rightarrow\O$ such that $d_e(\varphi_n)$ tends to zero. By passing to a subsequence if necessary we may assume that the geodesics $\varphi_n$ tend to $p\in\partial\Omega$. Note that Lemma \ref{complexgeodesics} implies that there is a sequence $\epsilon_n>0$ converging to $0$, such that any two different points $z,w\in \varphi_n(\Delta)$ satisfy the estimate ${\|(z-w)_{z}\|}/{\|z-w\|} \leq \epsilon_n$.
		
		We shall proceed as in the proof of Proposition~\ref{quantitativevisibility}, Case 2. First, we pick points $x_n$ on the ranges of $\varphi_n$ such that $\delta_\O(x_n)= \max_{\zeta\in\Delta} \delta_\O(\varphi_n(\zeta))$, and parametrize $\varphi_n$ so that $\varphi_n(0)=x_n$. By using the transformations from Lemma~\ref{wemayassume} we assume that $\O$ is as in there, $x_n=(1-s_n,0,...,0)=\varphi_n(0)$ and $\delta_\O(x_n)=s_n$. By Claim \ref{claimabouttangentialconvergence} we can take automorphisms of the ball $A^{-1}_{t_n}$ with $1-t_n \sim s_n$ such that $\tilde{\varphi}_n:=A^{-1}_{t_n}\circ\varphi_n$ is a complex geodesic of $\O_n$ uniformly $C^{1,\alpha}$-close to a complex geodesic of the unit ball whose image lies in $\{0\}\times \mathbb{C}^{N-1}$, denoted by $\psi$. Note that for such a geodesic $ d_e(\psi(\Delta))=2\max_{z\in\Delta}\|\psi'(z)\| .$
		As each $\tilde{\varphi}_n$ is $\mathcal{C}^1$-close to a $\psi$, same holds for $\tilde \varphi_n$, namely
		\begin{equation}\label{diameterestimateholdsinimage}
			d_e(\tilde{\varphi}_n) \sim \max_{z\in\Delta}\|\tilde{\varphi}_n'(z)\|.
		\end{equation}
		Note that above we use the fact that $\tilde\varphi_n(0)$ remains compactly in the image of $\tilde \varphi_n$ which follows from Claim \ref{claimabouttangentialconvergence}.
		
		Let $z_n,w_n\in \varphi_n(\Delta)$ be arbitrary and set $\tilde{z}_n:=A^{-1}_{t_n}(z_n),\tilde{w}_n:=A^{-1}_{t_n}(w_n)$.  It follows from Remark \ref{remarkaboutdilationofdistances} that
		$$
		\|z_n-w_n\| \sim (1-t^2_n)^{1/2}\|\tilde{z}_n-\tilde{w}_n\|.
		$$
		In particular, \begin{equation}\label{thefirstbigproblemweovercame} d_e(\varphi_n(\Delta)) \sim (1-t^2_n)^{1/2} d_e(\tilde{\varphi}_n(\Delta)),\:\:\:\:\:\text{and}\:\:\:\:\: \max_{\zeta \in \Delta} \|\varphi'_n(\zeta)\| \sim (1-t^2_n)^{1/2} \max_{\zeta\in \Delta}\|\tilde \varphi'_n (\zeta)\|   .\end{equation}
		Combining \eqref{diameterestimateholdsinimage} and \eqref{thefirstbigproblemweovercame} one infers that $$
		d_e (\varphi_n(\Delta)) \sim {(1-t^2_n)^{1/2}} d_e(\tilde{\varphi}_n(\Delta))  \sim {(1-t^2_n)^{1/2}} \max_{z\in\Delta} \|\tilde{\varphi}_n'(z)\| \sim \max_{z\in\Delta}\|\varphi_n'(z)\|.
		$$
		In particular, \eqref{finalstepforprofnikolovsdream} holds for the sequence $\varphi_n$. As it was arbitrary, the proof of Theorem \ref{themissingpart} follows. \end{proof}
	
	\section{Proof of Theorem 5}\label{sec:geometricproofs}
	
	\begin{proof}[Proof of Theorem \ref{profnikolovsconjecture}] Within this part we shall use the scaling argument, as well. However, we need to deal with the intricate term $(z-w)_z$ which makes the proof more subtle. We will first show that the estimates we want to achieve are preserved under the transformations we give in Section~\ref{transformations}, that is we will show that \eqref{wds} holds. In fact, we will prove the following:
		
		\begin{lemma}\label{letshopethatthisisthefinaldifficulty}
			Let $\O_1,\O_2\subset\Cn$ be bounded domains with $\mathcal C^2$-smooth boundaries and $f:\ov\O_1\rightarrow\ov\O_2$ be a biholomorphism. For $z,w\in\O_1$ one has
			\begin{equation}\label{e1}\|(f(z)-f(w))_{f(z)}\|\lesssim \|(z-w)_z\|+\|z-w\|^2+ \delta_{\O_1}(z) \|z-w\|
			\end{equation}
			and
			\begin{multline}\label{e2}\|(z-w)_z\|+\|z-w\|^2+ \delta^{1/2}_{\O_1}(z)\|z-w\| \sim \\ \|(f(z)-f(w))_{f(z)}\|+\|f(z)-f(w)\|^2+ \delta^{1/2}_{\O_2}(f(z))\|f(z)-f(w)\|. \end{multline}
		\end{lemma}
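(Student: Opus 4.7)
The plan is to leverage two facts: first, that a biholomorphism between closures of bounded $\mathcal{C}^2$-smooth domains extends holomorphically to a neighborhood of $\overline{\Omega}_1$, so $f$ and $f^{-1}$ are smooth up to the boundary, which yields the preliminary equivalences $\|f(z)-f(w)\| \sim \|z-w\|$ and $\delta_{\Omega_2}(f(z)) \sim \delta_{\Omega_1}(z)$; second, that the holomorphy of $f$ forces the complex tangent space at a boundary point $p$ to be mapped by $Df(p)$ onto the complex tangent space at $f(p)$, and hence (after taking Hermitian adjoints) the complex normal direction is preserved up to a bounded nonzero scalar. With these two ingredients, the estimate \eqref{e1} is the real content of the lemma, and \eqref{e2} will follow by applying \eqref{e1} symmetrically together with the trivial bound $\delta_{\Omega_1}(z) \lesssim \delta^{1/2}_{\Omega_1}(z)$ valid on any bounded domain.

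For \eqref{e1}, fix $z,w\in\Omega_1$, let $p=\pi_{\Omega_1}(z)\in\partial\Omega_1$ and $q=f(p)\in\partial\Omega_2$. Denote by $\eta^{(i)}(\cdot)$ the outward unit normal field on $\partial\Omega_i$, extended $\mathcal{C}^1$-smoothly to a tubular neighborhood by composing with the nearest-point projection (this extension is available since $\partial\Omega_i$ is $\mathcal{C}^2$, which makes $\pi_{\Omega_i}$ of class $\mathcal{C}^1$). Taylor-expanding and passing to the Hermitian adjoint gives
\[
(f(z)-f(w))_{f(z)} = \bigl\langle z-w,\ Df(z)^{*}\eta^{(2)}(f(z))\bigr\rangle + O(\|z-w\|^{2}).
\]
Since $Df(p)$ carries $T^{\mathbb C}_p\partial\Omega_1$ isomorphically onto $T^{\mathbb C}_q\partial\Omega_2$, and each $\eta^{(i)}$ is Hermitian-orthogonal to the corresponding complex tangent space, a one-line computation yields the identity
\[
Df(p)^{*}\eta^{(2)}(q) = c(p)\,\eta^{(1)}(p),
\]
with $|c(p)|$ bounded above and below uniformly in $p$. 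The $\mathcal{C}^1$-regularity of $Df$ and of both normal fields, together with the triangle-inequality estimate $\|\pi_{\Omega_2}(f(z))-f(p)\|\lesssim\delta_{\Omega_1}(z)$, upgrades this boundary identity to the interior statement
\[
Df(z)^{*}\eta^{(2)}(f(z)) = c(p)\,\eta^{(1)}(z) + O(\delta_{\Omega_1}(z)).
\]
Substituting and pairing with $z-w$ produces
\[
(f(z)-f(w))_{f(z)} = c(p)(z-w)_z + O(\|z-w\|^{2}) + O(\delta_{\Omega_1}(z)\,\|z-w\|),
\]
which is exactly \eqref{e1}.

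To pass from \eqref{e1} to \eqref{e2}, note that since $\Omega_1$ is bounded we have $\delta_{\Omega_1}(z)\lesssim \delta^{1/2}_{\Omega_1}(z)$, so \eqref{e1} already bounds the first summand of the right-hand side of \eqref{e2} by the left-hand side; the remaining two summands on each side of \eqref{e2} match pairwise by the preliminary equivalences. Applying \eqref{e1} to the inverse biholomorphism $f^{-1}:\overline{\Omega}_2\to\overline{\Omega}_1$ (with the roles of $\Omega_1$ and $\Omega_2$ reversed) furnishes the opposite comparison, completing \eqref{e2}. The main technical obstacle I anticipate is the error bookkeeping at the key step where one moves from the boundary identity $Df(p)^{*}\eta^{(2)}(q)=c(p)\eta^{(1)}(p)$ to its interior counterpart: the essential but easily overlooked point is that $\pi_{\Omega_2}(f(z))$ is generally not the same as $f(\pi_{\Omega_1}(z))$, so one must carefully verify that their discrepancy is $O(\delta_{\Omega_1}(z))$ to ensure the $\mathcal{C}^1$-continuity of the extended normal field delivers precisely the error terms allowed on the right-hand side of \eqref{e1}.
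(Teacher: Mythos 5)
Your proof is correct and takes a different route from the paper's. The paper normalizes coordinates so that $z=(-s,0,\dots,0)$, $\pi_{\Omega_1}(z)=0$, post-composes so that $df|_0$ is the identity, writes out the Taylor expansion $f(z)=(z_1+P(z),z'+Q(z))+\vec o(\|z\|^2)$ with $P,Q$ quadratic, and then bounds $|P(z)-P(w)|$, $|Q(z)-Q(w)|$ by splitting them into pieces and exploiting $z'=0$. Your approach replaces that coordinate bookkeeping by the structural identity $Df(p)^*\eta^{(2)}(q)=c(p)\eta^{(1)}(p)$, which follows cleanly from the fact that a biholomorphism of closures maps $T^{\mathbb C}_p\partial\Omega_1$ onto $T^{\mathbb C}_q\partial\Omega_2$ (one checks $\langle v,Df(p)^*\eta^{(2)}(q)\rangle=\langle Df(p)v,\eta^{(2)}(q)\rangle=0$ for all $v\in T^{\mathbb C}_p\partial\Omega_1$, forcing parallelism to $\eta^{(1)}(p)$), and pushes the error into the single perturbation step $Df(z)^*\eta^{(2)}(\pi_{\Omega_2}(f(z)))=c(p)\eta^{(1)}(p)+O(\delta_{\Omega_1}(z))$. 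You correctly identify and control the key pitfall there, namely that $\pi_{\Omega_2}(f(z))\neq f(\pi_{\Omega_1}(z))$ in general, showing via the triangle inequality and $\delta_{\Omega_2}(f(z))\sim\delta_{\Omega_1}(z)$ that the discrepancy is $O(\delta_{\Omega_1}(z))$, exactly as needed. Your reduction of \eqref{e2} to \eqref{e1} (applied to both $f$ and $f^{-1}$, together with $\|f(z)-f(w)\|\sim\|z-w\|$, $\delta_{\Omega_2}(f(z))\sim\delta_{\Omega_1}(z)$, and $\delta\lesssim\delta^{1/2}$) agrees with the paper. What the conceptual route buys you is that it sidesteps the paper's term-by-term analysis of the quadratic polynomial $P$ and the somewhat delicate ``we may assume $df|_0$ is the identity'' normalization; the price is that you must keep track of the Hermitian adjoint carefully, which you do. Both proofs ultimately rest on the same fact — that a biholomorphism sends the complex normal to a multiple of the complex normal, up to errors of size $\delta_{\Omega_1}(z)$ — expressed in different ways. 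One small point worth making explicit is that the argument applies as written only for $z$ in a collar neighborhood of $\partial\Omega_1$ (where $\pi_{\Omega_1}$ is single-valued and $\mathcal C^1$); for $z$ bounded away from $\partial\Omega_1$ the estimate is trivial since $\delta_{\Omega_1}(z)\|z-w\|$ alone dominates a constant multiple of $\|z-w\|$, and the left-hand side of \eqref{e1} is $\lesssim\|z-w\|$ anyway.
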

		
		\begin{proof}[Proof of Lemma \ref{letshopethatthisisthefinaldifficulty}]
			Let us prove \eqref{e1}. By isometric transformations we may assume that $z=(-s,0,\ldots,0)$ and $\pi_\O(z)=(0,\ldots,0)$. Post-composing
			with $(df |_0)^{-1}$, we may assume $df |_0$ is identity, and
			we have
			$$ f(z)=(z_1 + P(z), z' + Q(z)) + \vec{o}(\|z\|^2), $$ where $P,Q$ are quadratic polynomials and $\vec{o}$ means that the quantity we consider is a vector in $\Cn$. We may write $f(z)-f(w)= (z_1-w_1+P(z)-P(w), w' + Q(z)-Q(w)) + \vec{o}(|z_1-w_1|+\|w'\|^2).$ Since $|z_1-w_1|=\|(z-w)_z\|$ and $\|w'\|^2\leq \|z-w\|^2$, we are able to estimate the $\vec{o}(|z_1-w_1|+\|w'\|^2)$ term.
			
			Let $\tilde \delta _\O$ be the signed boundary distance function and $g_\O(z):=2 \ov \partial \tilde\delta_\O(z)$. Recall that
			the unit normal to $\partial \O$ at $\pi_\O(z)$ is given by $g_\O(z)=g_\O(\pi_\O(z))$.
			
			Note that $f(z)=(-s+\gamma_1 s^2, \gamma' s^2)$, whence
			$$ \|g_{\O_2}(f(z))-g_{\O_2}(0)\| = \|g_{\O_2}(f(z)) - (1,0,...,0)\| \lesssim s = \delta_{\O_1}(z) .$$ Then, by Cauchy-Schwarz and triangle inequalities, we can write
			\begin{multline*} \|(f(z)-f(w))_{f(z)}\| =
				\langle f(z)-f(w), (1,0,...,0) + \vec{ \mathcal{O}}(\delta_{\O_1}(z)) \rangle  = \\
				|z_1- w_1| + \|P(z)-P(w)\| + (\|w'\|+\|Q(z)-Q(w)\|) \mathcal{O}(\delta_{\O_1}(z)) .
			\end{multline*}
			The first term above is just $\|(z-w)_z\|$. To bound $\|P(z)-P(w)\|$ write $\|P(z)-P(w)\| \leq \|P(z_1,0,...,0) - P(w_1,0,...,0)\| +  \|P(w_1,0,...,0) - P(w)\|$. Observe that $ \|P(z_1,0,...,0) - P(w_1,0,...,0)\| \lesssim |z_1^2-w_1^2| \lesssim |z_1- w_1| = \|(z-w)_z\| .$ To deal with $\|P(w_1,0,...,0) - P(w)\|$ write $P(w_1,0,...,0) - P(w) = w_1 L(w') + S(w')$, where $L$ is linear and $S$ is a quadratic polynomial. Clearly, $|S(w')|\lesssim\|w'\|^2 \leq \|z-w\|^2$. Now, observe that $$|w_1 P(w')|\leq |z_1| |P(w')| + |z_1-w_1||P(w')| \lesssim \delta_{\O_1}(z)\|w'\| + |z_1-w_1| \lesssim \delta_{\O_1}(z)\|z-w\| + \|(z-w)_z\|.$$
			Putting all of these estimates together we infer that
			\begin{multline*} |P(z)-P(w)| \leq |P(z_1,0,...,0) - P(w_1,0,...,0)| +  |P(w_1,0,...,0) - P(w)| \lesssim\\
				\delta_{\O_1}(z)\|z-w\| + \|(z-w)_z\| + \|z-w\|^2.
			\end{multline*}
			
			Similar estimates as above show that $(\|Q(z)-Q(w)\|+\|w'\|)\delta_{\O_1}(z)\lesssim \delta_{\O_1}(z)\|z-w\|.$ Consequently,
			\begin{multline*}\|(f(z)-f(w))_{f(z)}\| \lesssim|z_1- w_1| + \|P(z)-P(w)\| + (\|w'\|+\|Q(z)-Q(w)\|) \delta_{\O_1}(z) \lesssim \\
				\delta_{\O_1}(z)\|z-w\| + \|(z-w)_z\| + \|z-w\|^2.
			\end{multline*}
			
			To show \eqref{e2} note that the distances are preserved, up to multiplicative constants, by $\mathcal C^1$-smooth maps. Therefore is enough to show that the term $\|(f(z)-f(w))_{f(z)}\|$ can be estimated by $\|(z-w)_z\|+\|z-w\|^2+ \delta^{1/2}_{\O_1}(z)\|z-w\|$ (to get the opposite estimate it is enough to apply the above to the inverse of $f$). This follows immediately from \eqref{e1}.
			
		\end{proof}
				
		We come back to the proof of Theorem \ref{profnikolovsconjecture}. The lower bound for the diameter follows from \cite[Theorem 3]{KN} and Proposition \ref{themissingpart}.
		
		We shall prove the upper one. We shall actually give a stronger estimate \begin{equation}\label{28} d_e(\varphi) \lesssim \|(z-w)_z\|/\|z-w\|+\delta^{1/2}_\O(z).
		\end{equation} Parametrize $\varphi$ so that $\delta_\O(\varphi(0))=\max_{\zeta\in\Delta}\delta_\O(\varphi(\zeta))$ and let $z\in\varphi(\Delta)$. We will show that the uniform estimate
		\begin{equation}
			\dfrac{\delta_{\Omega}(x)}{\delta_{\Omega}(z)} \sim \dfrac{1}{\delta_\Delta(\varphi^{-1}(z))} \label{interiorcaseisok}
		\end{equation}
		holds, where $x=\varphi(0)$. To prove it we will rely on the argument used in \cite[page 6]{KN}. By direct computations and \cite[Theorem 20]{NO} we get that
		$$ \dfrac{1}{2}\log\left(\dfrac{c\delta_{\Omega}(x)}{\delta_{\Omega}(z)}\right)\leq k_\O(x,z) = k_\Delta(0,\varphi^{-1}(z)) \leq \dfrac{1}{2} \log\left(\dfrac{2}{\delta_\Delta(\varphi^{-1}(z))}\right).$$ Thus one side of \eqref{interiorcaseisok} is clear. To see the opposite one, observe that, as $\O$ is strongly pseudoconvex, there exists $c>0$ such that
		\begin{multline}\label{ddD} \dfrac{1}{\delta_\Delta(\varphi^{-1}(z))} \sim \dfrac{1}{1-|\varphi^{-1}(z)|^2} = \k_\Delta(\varphi^{-1}(z);1) = \\ \k_\O(z;\varphi'(\varphi^{-1}(z))) \leq \dfrac{c\|\varphi'(\varphi^{-1}(z))\|}{\delta^{1/2}_\O(z)}+ \dfrac{\|(\varphi'(\varphi^{-1}(z)))_z\|}{\delta_\O(z)} .
		\end{multline}
		According to \cite[Proposition 7]{KN}, the estimate in \eqref{ddD} implies that
		$$ \delta_\O(z) \lesssim d_e(\varphi)\|\varphi'(\varphi^{-1}(z))\| \delta_\Delta(\varphi^{-1}(z)).$$
		By Proposition \ref{themissingpart} we get $d_e(\varphi)\|\varphi'(\varphi^{-1}(z))\| \lesssim \delta_\O(x)$. Two last inequalities prove \eqref{interiorcaseisok}.
		
		Observe that \eqref{interiorcaseisok} and Proposition \ref{themissingpart} imply that if $\varphi^{-1}(z)\in \Delta$ is contained in a fixed compact subset, then $$ \delta^{1/2}_\O(z) \sim \delta^{1/2}_\O(x) \sim d_e(\varphi),$$ so in this case the upper bound holds, i.e. $d_e(\varphi)$ can be estimated by $\delta_\O^{1/2}(z)$, whence \eqref{28} follows.
		
		Thus, the upper bound requires the proof only if $\varphi^{-1}(z)$ is close to the unit circle. We shall show that then $d_e (\varphi)$ can be bounded by the term $\| (z-w)_z \|/\|z-w\|$. Precisely, we shall prove the following:
		
		\begin{claim}\label{lefttoshow} For any $c\in (0,1)$ and for any complex geodesic $\varphi:\Delta\rightarrow \O$ parametrized so that the maximal boundary distance is attained at the origin,
			\begin{equation}\label{claim20}
				\|(z-w)_z\| \gtrsim d_e (\varphi) \|z-w\|, \:\:\: \:\: \text{provided that} \:\: \:\:\: |\varphi^{-1}(z)| \ge c.
			\end{equation}
		\end{claim}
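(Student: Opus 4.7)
The approach is by contradiction, using the scaling machinery of Section~\ref{transformations}. Suppose the Claim fails: there exist a constant $c\in(0,1)$, complex geodesics $\varphi_n:\Delta\to\O$ (parametrized so that $\delta_\O(\varphi_n(0))=D_\O(\varphi_n)$), and points $z_n=\varphi_n(\zeta_n)$, $w_n=\varphi_n(\eta_n)$ with $|\zeta_n|\ge c$ and
$$
\frac{\|(z_n-w_n)_{z_n}\|}{\|z_n-w_n\|}=o\bigl(d_e(\varphi_n)\bigr).
$$
When $d_e(\varphi_n)$ stays bounded below, I would extract by normal families a subsequence of $\varphi_n$ converging to a non-constant complex geodesic whose image meets a fixed compact subset of $\O$; the contradiction then follows by comparison with the ball together with~\eqref{interiorcaseisok}. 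The bulk of the work is the regime $d_e(\varphi_n)\to 0$, which I describe below.

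In this regime, part~(3) of Lemma~\ref{complexgeodesics} and the failure assumption force $z_n,w_n$ to accumulate tangentially at some $p\in\partial\O$. I first apply the Fridman-Ma transformation of Lemma~\ref{wemayassume} at $p$ so that $p=e_1$ and $\varphi_n(0)=(1-s_n,0,\dots,0)$ with $s_n:=\delta_\O(\varphi_n(0))$, and then the M\"obius rescaling $A_{t_n}^{-1}$ with $1-t_n\sim s_n$ supplied by Claim~\ref{claimabouttangentialconvergence}. The scaled discs $\tilde\varphi_n:=A_{t_n}^{-1}\circ\varphi_n$ converge in $\mathcal C^{1,\alpha}(\overline\Delta)$ to a complex geodesic $\psi(\lambda)=(0,\lambda v)$ of $\mathbb B^N$ lying in $\{0\}\times\mathbb C^{N-1}$ for a unit $v\in\mathbb C^{N-1}$. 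Setting $\tilde z_n=\tilde\varphi_n(\zeta_n)$ and $\tilde w_n=\tilde\varphi_n(\eta_n)$, this $\mathcal C^{1,\alpha}$-closeness yields
$$
\tilde z_n'-\tilde w_n'=(\zeta_n-\eta_n)v+o(|\zeta_n-\eta_n|),\qquad |(\tilde z_n)_1-(\tilde w_n)_1|=o(|\zeta_n-\eta_n|).
$$

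The decisive step is tracing $(z_n-w_n)_{z_n}$ back through the scaling. The Fridman-Ma normalization~\eqref{finaldefiningfunction} ensures that $\eta_{\pi_\O(z_n)}$ is close to $z_n/\|z_n\|$ with $\|z_n\|$ close to $1$, so
$$
(z_n-w_n)_{z_n}=\bigl((z_n)_1-(w_n)_1\bigr)+\frac{\langle z_n'-w_n',\,z_n'\rangle}{\|z_n\|}+\text{lower order}.
$$
By Remark~\ref{estimateofcoordinates}, $A_{t_n}$ sends the first coordinate with weight $1-t_n^2$ and primed coordinates with weight $(1-t_n^2)^{1/2}$; hence the first term is $(1-t_n^2)\bigl((\tilde z_n)_1-(\tilde w_n)_1\bigr)=o((1-t_n^2)|\zeta_n-\eta_n|)$, while the second equals $(1-t_n^2)(\zeta_n-\eta_n)\overline{\zeta_n}+o((1-t_n^2)|\zeta_n-\eta_n|)$, so the primed term dominates. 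Together with $\|z_n-w_n\|\sim(1-t_n^2)^{1/2}|\zeta_n-\eta_n|$ and $d_e(\varphi_n)\sim(1-t_n^2)^{1/2}$, I conclude
$$
\frac{\|(z_n-w_n)_{z_n}\|}{\|z_n-w_n\|}\gtrsim|\zeta_n|\,(1-t_n^2)^{1/2}\gtrsim c\,d_e(\varphi_n),
$$
contradicting the standing hypothesis. The main obstacle is precisely this bookkeeping: after the Fridman-Ma step the complex normal is close to $e_1$, but after the M\"obius rescaling the scaled geodesic lies in the tangential slice $\{0\}\times\mathbb C^{N-1}$, so the dominant contribution to $(z_n-w_n)_{z_n}$ comes from the primed components paired with the nearly-$e_1$ normal rather than from the first-coordinate difference. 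Lemma~\ref{letshopethatthisisthefinaldifficulty} together with the $\mathcal C^{1,\alpha}$-control of $\tilde\varphi_n$ provides the quantitative comparisons needed to make these expansions rigorous with constants uniform in $n$.
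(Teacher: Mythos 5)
Your argument matches the paper's proof in all essential respects: reduce to the regime $d_e(\varphi_n)\to 0$ by transversality for geodesics through a compact set, pass through the Fridman--Ma normalization and the M\"obius rescaling $A^{-1}_{t_n}$ with $1-t_n\sim s_n$, approximate the unit normal $\eta_{\pi_\O(z_n)}$ by (a normalization of) $z_n$ up to an $\mathcal O(d_e(\varphi_n)^{1+\alpha})$ error coming from \eqref{finaldefiningfunction}, and identify the dominant contribution $(1-t_n^2)(\zeta_n-\eta_n)\overline{\zeta_n}$ from the primed coordinates, which beats the first-coordinate term and the normal-perturbation error precisely because $|\zeta_n|\ge c$. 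The paper's proof differs only in bookkeeping conventions (it writes the approximate normal as $z_n/(1-\delta_\O(z_n))$ and separates your primed term into $B_n$ and $C_n$) and, like you, disposes of the $d_e(\varphi_n)$ bounded-below case in one line.
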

		
		\begin{proof}[Proof of Claim~\ref{lefttoshow}] We need to prove the assertion for a sequence of complex geodesics $\varphi_n: \Delta \rightarrow \O$ parametrized so that $\max_{\zeta\in\Delta}\delta_\O (\varphi_n(\zeta)) =\delta_{\Omega}(\varphi_n(0))$ and points $z_n,w_n\in \varphi_n(\Delta)$ such that $|\zeta_n|:=|\varphi^{-1}_n(z_n)| \ge c$.
			Suppose that \eqref{claim20} fails. As complex geodesics passing through a compact set intersect the boundary transversally, $d_e(\varphi_n)$ must tend to zero (use e.g. the visibility property of strongly pseudoconvex domains). Passing to a subsequence, if necessary, we can assume that $\varphi_n$ tends to $p\in\partial\Omega$.
			
			Set $x_n=\varphi_n(0)$ and for each $n$ apply the transformation given in Lemma~\ref{wemayassume}. Then $x_n=(1-s_n,0,...,0)$, where $s_n > 0$ tends to zero. Claim ~\ref{claimabouttangentialconvergence} then shows that passing to a subsequence, if necessary, we can find automorphisms of the ball $A^{-1}_{t_n}$, where $A^{-1}_{t_n}\circ\varphi_n=\tilde{\varphi}_n$ are uniformly $\mathcal C^{1,\alpha}$-close to a complex geodesic of the unit ball that is contained $\{0\}\times \mathbb{C}^{N-1}$. By Proposition \ref{themissingpart} we have $1-t_n\sim s_n \sim d^2_e (\varphi_n) .$
			
			Rotations in $\{0\}\times \mathbb{C}^{N-1}$ do not affect the estimates we want to achieve, so taking a proper one we may assume that $\tilde \varphi_n$ is uniformly $\mathcal{C}^{1,\alpha}$-close to the geodesic $\psi(\zeta)=(0,\zeta,0,...,0)$, $\zeta\in \Delta$.
			
			Let us write $\tilde\varphi_n(\zeta)=(0,\zeta,0')+(e_n^1(\zeta),e^2_n(\zeta),e^3_n(\zeta))$, where $0', e^3_n(\zeta) \in \mathbb{C}^{N-2}$.  Let $\omega_n=\varphi^{-1}_n(w_n)$. As $\varphi_n=A_{t_n}\circ\tilde \varphi_n$, explicit computations give the formulas:
			\begin{align}\label{coordinatesfromthedisc}
				z_n&=\left(\dfrac{e^1_n(\zeta_n)+t_n}{1+t_n e^1_n(\zeta_n)},\dfrac{(1-t_n^2)^{1/2}(\zeta_n + e^2_n(\zeta_n))}{1+t_n e^1_n(\zeta_n)},\dfrac{(1-t_n^2)^{1/2} e^3_n(\zeta_n)}{1+t_n e^1_n(\zeta_n)}\right),\\ \nonumber
				w_n&=\left(\dfrac{e^1_n(\omega_n)+t_n}{1+t_n e^1_n(\omega_n)},\dfrac{(1-t_n^2)^{1/2}(\omega_n + e^2_n(\omega_n))}{1+t_n e^1_n(\omega_n)},\dfrac{(1-t_n^2)^{1/2} e^3_n(\omega_n)}{1+t_n e^1_n(\omega_n)}\right).
			\end{align}
			
			To get a contradiction it is enough to prove that for large enough $n$ we have
			\begin{equation}\label{claimaboutinnerproduct}
				\|(z_n-w_n)_{z_n}\|= |\langle z_n-w_n,\eta_{z_n}\rangle |\gtrsim d_e (\varphi_n) \|z_n-w_n\|,
			\end{equation}
			where $\eta_{z_n}=g_\O(z_n)= 2 \ov \partial \tilde \delta_\O(z_n)$ is the complex unit normal to $\partial\O$ taken with respect to $\hat z_n:= \pi_\O(z_n) \in \partial\O.$
			Since $\delta_\O(z_n)\eta_{z_n}=\hat z_n - z_n$ and $\O$ has $\mathcal{C}^{2,\alpha}$-smooth boundary it is easy to see that $$  \eta_{z_n}=\hat{z}_n+\vec{\mathcal{O}}(\|\hat z_n-p\|^{1+\alpha})=\dfrac{z_n}{1-\delta_{\Omega}(z_n)}+\vec{\mathcal{O}}(\|\hat z_n-p\|^{1+\alpha}),$$ where $\vec{\mathcal{O}}(.)$ means that the quantity we consider is an element of $\Cn$.
			
			Observe that
			\begin{equation}
				\label{bigohislessthandiameter}
				\|\hat{z}_n - p \| \lesssim d_e (\varphi_n).
			\end{equation}
			Indeed, $	\|\hat{z}_n - p \| \leq 	\|\hat{z}_n - z_n \|+\|z_n-x_n\|+\|x_n-p_n\| \leq 2 \delta_{\Omega}(x_n)+ d_e(\varphi_n(\Delta))$. Therefore, \eqref{bigohislessthandiameter} follows from Proposition~\ref{themissingpart}.
			
			Therefore, \begin{equation}\label{unitnormalequation} \eta_{z_n} =\frac{z_n}{1-\delta_\O(z_n)} + E_n(z_n),\end{equation} where $|E_n(z_n)| \leq d_e(\varphi_n(\Delta)) d_e(\varphi_n(\Delta))^{\alpha}$. Formulas \eqref{coordinatesfromthedisc} and \eqref{unitnormalequation} show that $$ \langle z_n-w_n,\eta_{z_n} \rangle = \dfrac{(z_n^1-w_n^1) \ov z^1_n} {1-\delta_\O(z_n)} + \dfrac{ (z_n^2-w_n^2) \ov z^2_n}{1-\delta_\O(z_n)} +  \dfrac{(z'_n-w'_n) \ov z'_n}{1-\delta_\O(z_n)} + \langle z_n-w_n,E_n(z_n) \rangle .$$
			
			We shall estimate the quantity given above. First note that by Cauchy-Schwarz inequality and \eqref{bigohislessthandiameter} one deduces that
			$$ | \langle z_n-w_n,E_n(z_n) \rangle | \leq \|z_n-w_n\| \|E_n(z_n)\| \lesssim \|z_n-w_n\|  d_e(\varphi_n(\Delta)) d_e(\varphi_n(\Delta))^{\alpha} .$$
			
			Let us set $$A_n=\dfrac{(z_n^1-w_n^1) \ov z^1_n} {1-\delta_\O(z_n)},\ B_n = \dfrac{ (z_n^2-w_n^2) \ov z^2_n}{1-\delta_\O(z_n)},\ C_n = \dfrac{(z'_n-w'_n) \ov z'_n}{1-\delta_\O(z_n)} . $$
			
			We will first estimate $B_n$. Note that $d_e(\varphi_n) \sim s^{1/2}_n \sim (1-t^2_n)^{1/2}$  by Proposition \ref{themissingpart} and Claim \ref{claimabouttangentialconvergence}, whence using \eqref{coordinatesfromthedisc} and Remark \ref{remarkaboutdilationofdistances} we get
			$$ |B_n | = \|(z_n^2-w_n^2) \ov z^2_n\| \sim (1-t_n^2) | \zeta_n | |\zeta_n - \omega_n| \sim d_e (\varphi_n) \|z_n-w_n\| | \zeta_n | .$$
			Similarly, $$ |A_n| \sim (1-t_n^2) \|e^1_n(\zeta_n) - e^1_n (\omega_n) \| .$$
			Note that $\mathcal{C}^{1}$-convergence implies that there are constants $\epsilon_n$ tending to zero such that
			$$ |e^1_n(\zeta_n) - e^1_n (\omega_n) | \leq \epsilon_n |\zeta_n-\omega_n|. $$ Consequently, according to Remark \ref{remarkaboutdilationofdistances},
			$$ | A_n | \lesssim  \epsilon_n (1-t_n^2)^{1/2} \|z_n-w_n\|  \lesssim  \epsilon_n d_e(\varphi_n) \|z_n-w_n\| .$$
			Similarly,
			\begin{multline*} | C_n | \lesssim   (1-t_n^2) \max\{|e^1_n(\zeta_n) - e^1_n (\omega_n) |, |e^3_n(\zeta_n) - e^3_n (\omega_n) |\}|e^3_n(\zeta_n)| \lesssim \\ (1-t_n^2)^{1/2}  \| z_n - w_n \| |e^3_n(\zeta_n)| \lesssim d_e (\varphi_n) | \zeta_n - \omega_n | |e^3_n(\zeta_n)| .
			\end{multline*}
			As a consequence of $\mathcal{C}^{1,\alpha}$-convergence of the complex geodesics, $ | e^3_n(\zeta_n)| $ tends to zero.
			
			Summing up, we have obtained the following.
			
			$$ \langle z_n-w_n,\eta_{z_n} \rangle = A_n + B_n + C_n + \langle z_n-w_n, E_n(z_n) \rangle, $$ where
			$$ \dfrac{| A_n | + | C_n | + |\langle z_n-w_n, E_n(z_n) \rangle|} {d_e (\varphi_n) \|z_n-w_n\|} \rightarrow 0 $$ and
			$$ |B_n| \sim |\zeta_n| d_e(\varphi_n) \|z_n-w_n\| .$$
			Since $|\zeta_n |\ge c$, it follows that $|\langle z_n-w_n,\eta_{z_n} \rangle | \sim B_n \gtrsim d_e (\varphi_n) \|z_n-w_n\| .$ Hence \eqref{claimaboutinnerproduct} holds and a contradiction is derived.
		\end{proof}
		
		Claim~\ref{lefttoshow} is proven, and Theorem \ref{profnikolovsconjecture} follows. \end{proof}

	\section{Proofs of Theorem 1, Proposition 2, and Corollary 8}\label{estimateproofs}
	
	\begin{proof}[Proof of Proposition \ref{normallowerbound}] A partial case of this result is given in \cite[Proposition 2]{KN}. To extend this result to full generality, note that the \textit{tangential case} of Proposition \ref{themissingpart} implies that the equation (6) in \cite{KN} holds. Then the result follows from \cite[Proposition 8]{KN}. We refer the reader to the proofs of \cite[Proposition 8]{KN} and \cite[Proposition 2]{KN} for more details. \end{proof}
	
	\begin{proof}[Proof of Theorem \ref{profnikolovsdream}]
		
		For clarity, we shall divide the proof into several subproofs.
		
		\medskip

		\noindent\textit{Proof of the lower bound for the Kobayashi distance.} Recall the estimate given in \cite[Proposition 1.6, Proposition 1.7]{NT}: if $\O$ is a strongly pseudoconvex domain with $\mathcal{C}^{2}$-smooth boundary, then there exists $c>0$ such that
		\begin{equation}\label{partofthelowerbound} k_\O(z,w)\geq \log\left(1 + \dfrac{c\|z-w\|}{\delta^{1/2}_\O(z)}\right)\left(1 +  \dfrac{c\|z-w\|}{\delta^{1/2}_\O(w)}\right), \:\:\:\:\: z,w \in \O.\end{equation}
		Thus the lower bound follows from Proposition \ref{normallowerbound} together with \eqref{partofthelowerbound}.
		
		\medskip
		
		\noindent\textit{Proof of the upper bound for the Kobayashi distance}. Recall that, by \cite[Corollary 8]{NA}, for a domain $\O$ with Dini-smooth boundary one has the following estimate:	\begin{equation}\label{diniestimate}k_\O(z,w)\leq \log \left(1+\dfrac{C\|z-w\|}{\delta^{1/2}_\O(z)\delta^{1/2}_\O(w)}\right), \:\:\:\:\: z,w\in\O.\end{equation}
			
		By \eqref{diniestimate} we see that if the upper bound in Theorem \ref{profnikolovsdream} fails for sequences $z_n,w_n\in\O$, then passing to subsequences if necessary, we must have $z_n,w_n\rightarrow p\in\partial\O$ and $\frac{\|(z_n-w_n)_p\|}{\|z_n-w_n\|}\rightarrow 0$. We will show that this is impossible.
		
		Looking at a tail of the sequence Lemma~\ref{complexgeodesics} asserts that $z_n,w_n\in \Omega$ lie in the image of a complex geodesic $\varphi_n$ which tends to $p$. Parametrize $\varphi_n$ so that maximal boundary distance is attained at $\varphi_n(0)=:x_n\in\varphi_n(\Delta)$ and assume that $\Omega$ is as in Lemma \ref{wemayassume}. Then $x_n=(1-s_n,0,...,0)$ and $\delta_\O(x_n)=s_n$. Taking an automorphism of the ball $A^{-1}_{t_n}$ as in Claim \ref{claimabouttangentialconvergence} shows that for large $t_n$ the analytic disc $\tilde\varphi_n:=A^{-1}_{t_n}\circ \varphi_n$ is uniformly close to a complex geodesic of the ball contained in $\{0\}\times \mathbb{C}^{N-1}$. Furthermore, we have $1-t_n\sim s_n = \delta_\O(x_n)$. Set $\tilde{z}_n = A^{-1}_{t_n}(z_n)$, $\tilde{w}_n= A^{-1}_{t_n}(w_n)$.
		
		We observed in Remark \ref{remarkaboutdilationofdistances}
		and the proof of Proposition \ref{quantitativevisibility} that the following estimates hold:	$(1-t^2_n)\delta_{\O_n}(\tilde z_n)\lesssim\delta_\O(z_n), $ and $(1-t^2_n)\delta_{\O_n}(\tilde w_n) \lesssim \delta_\O (w_n)  .$
		We claim more, namely that there are asymptotic equalities within them. Precisely, we will show the following: if $y_n$ lies in the range of $\varphi_n$ and $\tilde y_n$ is its pull-back to $\tilde \varphi_n$, then
		\begin{equation}\label{eq:cl} (1-t^2_n)\delta_{\O_n}(\tilde y_n)\sim\delta_\O(y_n).
		\end{equation}
		
		To see this, put $\delta'_{\O}(y_n)=\inf\{\|u-y_n\|:u\in\partial \O, u^1= y^1_n\}, \delta'_{\O_n}(\tilde y_n)=\inf\{\|u-\tilde y_n\|:u\in\partial \O_n, u^1=\tilde y^1_n\}$, and $\delta^1_{\O}( y_n)=\inf\{\|u-y_n\|:u\in\partial \O, u'= y'_n\}$. Observe that, as $\tilde \varphi_n$ are uniformly $\mathcal{C}^{1,\alpha}$-close to a complex geodesic of the ball lying in $\{0\}\times\mathbb{C}^{N-1}$, we have $$ \delta_{\O_n}(\tilde y_n)\sim \delta'_{\O_n}(\tilde y_n) .$$
		Moreover by Remark~\ref{estimateofcoordinates}, $\delta'_\O(y_n)\sim (1-t^2_n)^{1/2} \delta'_{\O_n}(\tilde y_n)$.
		
		To get \eqref{eq:cl} it is enough to show that
		\begin{equation}\label{eq:cl1} \delta_\O(y_n) \lesssim \delta^1_\O(y_n) \lesssim (1-t^2_n)^{1/2}\delta'_\O(y_n).
		\end{equation}
		The first estimate in \eqref{eq:cl1} is trivial, only the second one requires a proof. Since $\tilde y_n$ belongs to the image of $\tilde \varphi_n$, the estimate $|\tilde y^1_n|<\epsilon_n$ holds, where $\epsilon_n$ is a sequence of positive numbers converging to zero. Pushing $\tilde y_n$ forward by $A_{t_n}$ one can write $y_n^1=t_n+  (1-t_n)o_n,$ where $\text{o}_n$ are complex numbers converging to $0$. Let us make a substitution $t_n+ i (1-t_n)\text{o}_n=\tau_n+ i (1-\tau_n)o_n$, where $o_n \in \mathbb{R}$ (to be precise, we define $\tau_n:= t_n + (1-t_n) \re \text{o}_n$ and $o_n:=\im \text{o}_n/(1-\re \text{o}_n)$). It is clear that $o_n$ converges to $0$ and that $1-\tau_n \sim 1-t_n$. Thus we can write $y_n=(\tau_n+i(1-\tau_n)o_n,y'_n)\in (\mathbb R + i\mathbb R)\times \mathbb C^{N-1})$.
		
		Take $\sigma_n > \tau_n$ such that $(\sigma_n+i(1-\tau_n)o_n,y_n')\in\partial \Omega$. Moreover, take $\xi'_n$ such that $a_n:=(\tau_n+i(1-\tau_n) o_n, \xi'_n)$ is in $\partial \O$ and satisfies $\|a_n-y_n\|= \delta'_\O(z_n)$.
		
		Near $p=(1,0,...,0)$ we can write $\partial \O$ as $\re z_1 = u(\text{Im} z_1, z')$, where $$u(\text{Im} z_1, z') =
		1 - \frac{|\text{Im} z_1|^2}{2} - \frac{\|z'\|^2}{2} +
		\mathcal{O}(|\text{Im} z_1|^{2+\alpha} + \|z'\|^{2+\alpha}).$$
		Then, in particular, $\tau_n=u((1-\tau_n)o_n,\xi'_n)$ and  $\sigma_n=u((1-\tau_n)o_n,y'_n)$.
		
		Clearly $\delta^1_\O(y_n)\leq   |\tau_n-\sigma_n| = \| u((1-\tau_n)o_n,\xi'_n)-  u((1-\tau_n)o_n,y'_n) \|$ and $\delta'_\O(y_n) = \|\xi'_n-y'_n\|$. Since $\|\xi'_n\|,\|y'_n\|$ behave as $\mathcal{O}((1-\tau^2_n)^{1/2})$, we get
		$$ \frac{ \delta^1_\O(y_n)}{ \delta'_\O(y_n)} \leq \dfrac{\| u((1-\tau_n)o_n,\xi'_n)-  u((1-\tau_n)o_n,y'_n) \|}{\|\xi'_n-y'_n\|} \lesssim \|\xi'_n\|+\|y'_n\| \lesssim (1-\tau^2_n)^{1/2} \sim (1-t^2_n)^{1/2}.$$
		This yields \eqref{eq:cl1} and consequently leads to \eqref{eq:cl}.

		Summing up, it follows from  what we showed, Remark \ref{remarkaboutdilationofdistances}, and the proof of Proposition \ref{quantitativevisibility} that the following estimates hold:
		$$ \|z_n-w_n\| \sim (1-t^2_n)^{1/2}\|\tilde z_n-\tilde w_n\|, \: \: \: \delta_{\Omega}(z_n)\sim (1-t^2_n) \delta_{\Omega_n}(\tilde{z}_n), \: \: \: \delta_{\Omega}(w_n)\sim (1-t^2_n) \delta_{\Omega_n}(\tilde{w}_n) .$$		
		All of them together with \eqref{diniestimate} imply that 	
		$$ k_\O(z_n,w_n) = k_{\O_n}(\tilde{z}_n,\tilde{w}_n) \leq \log\left(1+\dfrac{C\|\tilde z_n -\tilde w_n\|}{\sqrt{\delta_{\Omega_n}(\tilde z_n)\delta_{\Omega_n}(\tilde w_n)}}\right) \leq \log\left(1+\dfrac{C'\|z_n-w_n\|(1-t^2_n)^{1/2}}{\sqrt{\delta_{\Omega}(z_n)\delta_{\Omega}(w_n)}}\right),$$
		with a uniform $C'>0$.
		Recall that by Claim~\ref{claimabouttangentialconvergence} $(1-t^2_n)^{1/2}\sim s^{1/2}_n = \delta^{1/2}_{\Omega}(x_n)\sim d_e(\varphi_n(\Delta))$. Therefore, the upper bound for the Kobayashi distance follows from Theorem~\ref{profnikolovsconjecture}.
		
		\medskip
		
		\noindent \textit{Extension of estimates to the Lempert function and the Carath{\'e}odory distance}. Recall the following comparison results by \cite[Theorem 1.6]{NT} and \cite[Remark 1.10]{NT} (see also \cite[Theorem 1]{N} for more precise estimates) about invariant functions of strongly pseudoconvex domains:$$l_\O(z,w) \leq c_\O(z,w) + C_0 g_\O(z,w) \:\:\: \:\: \text{and} \:\:\:\:\: l_\O(z,w) \leq C_0 c_\O(z,w),$$ where $C_0>1$ and $$ g_\O(z,w) = \dfrac{\|z-w\|}{\|z-w\|^{1/2}+\delta^{1/2}_\O(z)+\delta^{1/2}_\O(w)} . $$
		
		Observe that $g_\O(z,w)$ is bounded above. In particular, as the Kobayashi distance is bounded below by the Carath{\'e}odory distance and bounded above by the Lempert function, we see that there exists a constant $C_1>1$ such that
		\begin{align}
			k_\O(z,w) &\leq c_\O(z,w) + C_1, \:\: k_\O(z,w)\leq  C_1 c_\O(z,w), \label{comparasion} \\ \nonumber
			l_\O(z,w)  & \leq k_\O(z,w) +  C_1, \:\: l_\O(z,w) \leq  C_1 k_\O(z,w) .
		\end{align}
		
		\medskip

		\noindent\textit{Proof of the upper bound for the Lempert function.} Suppose that the upper bound fails. Then we can find $z_n,w_n\in\O$ tending to $a,b\in\ov\O$, and $c_n\in\mathbb{R}$ tending to infinity such that
		\begin{equation}\label{contradictionforlempert}
			l_\O(z_n,w_n) \geq \log\left(1+{c_n A_\O(z_n,w_n)}\right).
		\end{equation}
		We need to consider two cases (pass to a subsequence, if necessary):	
		
		\textit{Case I.} The sequence $l_\O(z_n,w_n)$ is bounded from above. Then trivially $k_\O(z_n,w_n) \lesssim 1$. In particular, by the lower bound we obtained for the Kobayashi distance, this implies that $A_\O(z_n,w_n)\lesssim 1$. Applying the upper bound for the Kobayashi distance and \eqref{comparasion} we get $$ l_\O(z_n,w_n) \leq C_1 k_\O(z_n,z_n) \leq {C_2 A_\O(z_n,w_n)} .$$
		
		Note that for $r'\geq 0$ one can find a constant $C'>1$ such that $r \leq \log(1+C' r)$ holds for $0\leq r \leq r'$. In light of this, as $A_\O(z_n, w_n)$ are uniformly bounded, the inequality above contradicts \eqref{contradictionforlempert}.
		
		\textit{Case II.} The sequence $l_\O(z_n,w_n)$ is bounded away from $0$. It follows from \eqref{comparasion} and the lower bound we obtained for the Kobayashi distance that $k_\O(z_n,w_n)$, and consequently $ A_\O(z_n,w_n)$, are bounded away from $0$. Using \eqref{comparasion} we get
		\begin{multline*}
			l_\O(z_n,w_n) \leq k_\O(z_n,w_n) + C_3 \leq \log\left(1+ {C A_\O(z_n,w_n)}\right) + C_3 \leq \\ \log\left({C_4 A_\O(z_n,w_n)}\right) + C_3  \leq \log\left(1+ {C_5 A_\O(z_n,w_n)}\right),
		\end{multline*}
		where the constant $C_4$ depends on $\epsilon$ such that $A_\O(z_n,w_n)\geq \epsilon$.
		
		Having this estimate, we once again see that a sequence fulfilling \eqref{contradictionforlempert} cannot exist.
		
		\medskip

		\noindent\textit{Proof of the lower bound for the Carath{\'e}odory distance.} Suppose that this lower bound fails. We can then find $z_n,w_n\in\O$ tending to $a,b\in\ov\O$ and $c_n>0$ tending to zero such that
		\begin{equation}\label{contradictionforcarathéodory}
			c_\O(z_n,w_n) \leq \log\left(1+{c_n A_\O(z_n,w_n)}\right).
		\end{equation}
		By passing to a subsequence, if necessary, we distinguish two cases.
		
		\textit{Case I.} The sequence $c_\O(z_n,w_n)$ is bounded from above.  Due to \eqref{comparasion} and the lower bound we obtained for the Kobayashi distance we have that $k_\O(z_n,w_n) \lesssim 1$, whence $A_\O(z_n,w_n) \lesssim 1$. Using \eqref{comparasion}, the lower bound we obtained for the Kobayashi distance and the fact that  $\log(1+r)\leq r \leq \log(1+C'r)$ for $0\leq r\leq r'$, we get  \begin{multline*}
			c_\O(z_n,w_n) \geq C^{-1}_1 k_\O(z_n,w_n) \geq  C^{-1}_1 \log  \left(1+{c A_\O(z_n,w_n)}\right) \geq  \\
			{c_2 A_\O(z_n,w_n))} \geq  \log\left(1+{c_2 A_\O(z_n,w_n)}\right) .
		\end{multline*}
		This contradicts \eqref{contradictionforcarathéodory}.

		\textit{Case II.} The sequence $c_\O(z_n,w_n)$ escapes to infinity.  Then, clearly, $k_\O(z_n,w_n)\to \infty,.$ Therefore, by the upper bound we obtained for the Kobayashi distance, $ A_\O(z_n,w_n)\to \infty$, as well. Using the lower bound we obtained for the Kobayashi distance, by  \eqref{comparasion} we get  \begin{multline*}
			c_\O(z_n,w_n) \geq k_\O(z_n,w_n) - C_1 \geq
			\log\left(1+{c A_\O(z_n,w_n)}\right) -C_1 \geq \\ \log\left({c A_\O(z_n,w_n)}\right) -C_1 \geq \log\left({c_3 A_\O(z_n,w_n)}\right) \geq \log\left(1+{\dfrac{c_3}{2} A_\O(z_n,w_n)}\right)
		\end{multline*} provided that $n$ is big enough.We again see that a sequence satisfying \eqref{contradictionforcarathéodory} cannot exist.
		
		\medskip

		\noindent\textit{Proof of the estimates of the Bergman distance.} It follows from the proof of \cite[Corollary 1.4]{NT} that if $\O$ has $\mathcal C^{3,1}$-smooth boundary, then there exists $C > 1$ such that
		$$ |k_\O(z,w)-b_\O(z,w)| \leq C g_\O(z,w) \:\:\:\:\:\text{and}\:\:\:\:\: C^{-1} \leq \frac{k_\O(z,w)}{b_\O(z,w)} \leq C .$$
		Subsequently, the estimates follow through the arguments we have presented for both the Lempert function and the Carath\'eodory distance.
		
		This finishes the proof of Theorem \ref{profnikolovsdream}.\end{proof}
	
	\begin{proof}[Proof of Corollary \ref{localizationforkobayashiandlempert}]
		Fix $\O$ and $p$ satisfying the assumptions. We may find two neighborhoods of $p$, $V\subset\subset U$ such that $\O\cap U$ is strongly pseudoconvex and for all $z,w \in \O\cap V$ the equality \begin{equation}A_\O(z,w)=A_{\O\cap U}(z,w)\label{equationforlocalization}\end{equation}
		holds. As $\O\cap U$ is strongly pseudoconvex, Theorem ~\ref{profnikolovsdream} shows that there exist $C>c>0$ such that we have
		\begin{equation}\label{estimatesforthesmalldomain} \log(1+c A_{\O\cap U}(z,w)) \leq k_{\O\cap U}(z,w) \leq l_{\O\cap U}(z,w) \leq \log(1+C A_{\O\cap U}(z,w)), \:\:\:\:\: z,w\in \O\cap V . \end{equation}
		Then \eqref{equationforlocalization} and the monotonicity of the Lempert function under the inclusion of sets give
		$$ k_\O(z,w) \leq l_\O(z,w) \leq l_{\O\cap U}(z,w) \leq \log(1+C A_\O(z,w)). \:\:\:\:\: z,w\in\O\cap V.$$
		
		It remains to show the lower bound. By shrinking $U$ and $V$ if necessary, \cite[Theorem 1.1]{NT2} shows that there exists a constant $C>1$ such that
		\begin{equation}\label{kobayashidistancecomparasion}
			k_{\O\cap U}(z,w) \leq k_\O(z,w)+C  \:\:\text{and}\:\: k_{\O\cap U}(z,w) \leq C k_\O(z,w), \:\:\:\:\: z,w \in \O\cap V. \end{equation}
		Having \eqref{equationforlocalization} and \eqref{estimatesforthesmalldomain} the lower bound becomes straightforward. It can be achieved by repeating the arguments provided in the proof of Theorem~\ref{profnikolovsdream} concerning the lower bound for the Carath{\'e}odory distance. The only difference is to use \eqref{kobayashidistancecomparasion} instead of \eqref{comparasion}.
	\end{proof}

	\section{Strictly linearly convex case}\label{strictlylinearlyconvexcase}
	
	Let $\O$ be a domain in $\mathbb{C}^N$. Set
	$$h_\O(z,w)=\|(z-w)_z\|+\|z-w\|\delta^{1/2}_\O(z).$$
	Corollary \ref{localizationforkobayashiandlempert} shows that if $p\in\partial\O$ is a $\mathcal C^{2,\alpha}$-smooth strongly pseudoconvex boundary point, then there exists $C>1$ such that
	\begin{equation}\label{exponentialform}C^{-1}<\frac{(e^{k_\O(z,w)}-1)\delta^{1/2}_\O(z)\delta^{1/2}_\O(w)}{h_\O(z,w)+\|z-w\|^2}<C,
		\quad z\neq w\in \O,\ z,w\mbox{ near } p.\end{equation} It is clear that none of the summands of $h_\O$ can be removed in such an estimate
	($C$ may vary). On the other hand, the proof of Lemma \ref{letshopethatthisisthefinaldifficulty} asserts that the terms $\|z-w\|^2$ in the upper estimates in Theorem \ref{profnikolovsdream} and Corollary \ref{localizationforkobayashiandlempert} are also essential. One can observe this fact directly. Indeed, consider a domain that near the origin is given by $D=\{\rho<0\}\subset \mathbb{C}^2$,  where $\rho(z)=\re(z_1-z_2^2)+|z_2|^2$. If we take $z_n,w_n\in D$ such that $z_n=(\delta_n,0), w_n=(0,\epsilon_n)$ with $\delta_n=o(\epsilon_n)$, we find that $\|(z_n-w_n)_{z_n}\|+\delta^{1/2}_D(z_n)\|z_n-w_n\|=o(\|z_n-w_n\|^2)$. Hence, the term $\|z-w\|^2$ in the lower bound cannot be estimated by the other two terms. In particular, it cannot be, in general, removed from the upper bound in Theorem \ref{profnikolovsdream} as well as in Corollary \ref{localizationforkobayashiandlempert}.

	Interestingly, the scenario changes when dealing with strictly linearly convex domains. This section is devoted to exploring this matter and uncovering some results that stand on their own significance.
	
	Let	$\tilde \delta_\O$ denote the signed boundary distance function of $\O$. Recall that $\O$ is called strictly linearly convex at a $\mathcal C^2$-smooth boundary point
	$p$ if the restriction of the Hessian of $\tilde \delta_\O$ to the complex tangent plane $T^{\C}_p(\partial \O)$ is strictly
	positive definite. We claim that the $\|z-w\|^2$ term in \eqref{exponentialform} is
	superfluous if and only if $\O$ is strictly linearly convex at $p\in\partial \O$. Precisely, our aim is to show the following:
		
	\begin{prop}\label{1} Let $p$ be a $\mathcal C^2$-smooth boundary point of a domain $\O$ in $\C^N.$
		The following conditions are equivalent:
		\begin{enumerate}
			\item $\O$ is strictly linearly convex at $p$;
			\item $\ds c_2=\liminf_{\O\ni z,w\to p}\frac{h_\O(z,w)}{\|z-w\|^2}>0;$
			\item $\ds c_3=\liminf_{\O\ni w\to p}\frac{\|(p-w)_p\|}{\|p-w\|^2}>0.$
		\end{enumerate}
	\end{prop}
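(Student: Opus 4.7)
The plan is to prove the equivalences via $(1)\Rightarrow(3)\Rightarrow(1)$, $(2)\Rightarrow(3)$, and $(1)\Rightarrow(2)$. After a translation and unitary rotation, I may assume $p=0$ and $\eta_p=e_1$, so that near $p$ the boundary is the graph $\re z_1=u(\im z_1,z')$ with $u\in\mathcal C^2$, $u(0)=0$, $du(0)=0$, and $T^{\mathbb C}_p(\partial\O)=\{0\}\times\mathbb C^{N-1}$. Condition $(1)$ is then equivalent to $\mathrm{Hess}_0 u$ being negative definite on the $z'$-directions, and absorbing the $y_1z'$-cross terms by AM-GM yields
\begin{equation*}
u(y_1,z')\leq A y_1^2-c'\|z'\|^2+O(\|(y_1,z')\|^3)
\end{equation*}
for some $A,c'>0$.

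For $(1)\Rightarrow(3)$, I take $w\in\O$ near $p$. From $x_1(w)<u(y_1(w),w')$ together with the displayed bound, a case split on whether $A y_1(w)^2\leq\tfrac{c'}{2}\|w'\|^2$ yields either $|x_1(w)|\gtrsim\|w'\|^2$ (hence $|w_1|\gtrsim\|w'\|^2$) or $\|w'\|\lesssim|y_1(w)|$ (hence $\|w\|\lesssim|w_1|$); combined with $|w_1|\geq|w_1|^2$ for small $w$, both cases give $|w_1|\gtrsim\|w\|^2$. For $(3)\Rightarrow(1)$ I argue contrapositively: if $\mathrm{Hess}_0 u$ fails to be negative definite on $\{0\}\times\mathbb C^{N-1}$, then either $\mathrm{Hess}_0 u(v',v')>0$ for some $v'\neq 0$, in which case $w_n:=(0,t_n v')\in\O$ has $|w_{n,1}|=0$, or there is a null direction $v'$ with $u(0,tv')=O(t^3)$, in which case $w_n:=(-t_n^{5/2},t_n v')\in\O$ has $|w_{n,1}|/\|w_n\|^2\sim t_n^{1/2}\to 0$; both violate $(3)$. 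The implication $(2)\Rightarrow(3)$ follows by letting $z$ approach $p$ along the inward real normal: eventually $\pi_\O(z)=p$, so $\eta_{\pi(z)}=\eta_p$ and $(z-w)_z\to(p-w)_p$, while $\delta_\O(z)\to 0$ kills the second summand of $h_\O$.

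The main technical work is $(1)\Rightarrow(2)$. I write $\eta_{\pi(z)}=e_1+E(z)$ with $\|E(z)\|=O(\|z\|)$, where $E(z)$ comes (up to lower order) from the tangential $z'$-gradient of $-u$ at $\pi_\O(z)$, so that $(z-w)_z=(z_1-w_1)+\langle z-w,E(z)\rangle$. For $z,w\in\O$ near $p$, I proceed by cases: (a) if $|z_1-w_1|\gtrsim\|z-w\|^2$ and the error $\langle z-w,E(z)\rangle$ is dominated by it, then $|(z-w)_z|\gtrsim\|z-w\|^2$ directly; (b) if $\delta_\O(z)\gtrsim\|z-w\|^2$, the summand $\|z-w\|\delta_\O^{1/2}(z)$ alone dominates; (c) in the remaining \emph{mixed} regime, $z$ and $w$ are nearly vertically aligned with $\|z-w\|\sim\|z'-w'\|$, and the strict positive-definiteness of $\mathrm{Hess}_0(-u)$ on the complex tangent forces $|\langle z-w,E(z)\rangle|\gtrsim\|z'-w'\|^2\sim\|z-w\|^2$.

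The hard part is precisely the mixed regime (c): one must extract a quantitative lower bound on the second-order pairing of $z-w$ with $E(z)=-\nabla_{z'}u(\pi_\O(z))$ up to lower-order corrections, which is where the strict negative-definiteness of $\mathrm{Hess}_0 u$ on the complex tangent is crucially exploited. This mechanism fails for the weaker class of strongly pseudoconvex domains, as illustrated by the example $\{\re(z_1-z_2^2)+|z_2|^2<0\}$ mentioned just before the proposition, where an additional non-convex $y_1$-contribution in $u$ subverts the expected quadratic lower bound.
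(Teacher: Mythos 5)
Your overall structure — proving $(1)\Rightarrow(3)$, $(3)\Rightarrow(1)$, $(2)\Rightarrow(3)$, $(1)\Rightarrow(2)$ — and the easy implications are largely on the right track, but there is a genuine gap in the crucial implication $(1)\Rightarrow(2)$, and a couple of slips elsewhere.

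The main gap is exactly where you flag it: the ``mixed regime'' in $(1)\Rightarrow(2)$. You write $\eta_{\pi(z)}=e_1+E(z)$, set up a case split, and then assert that in case (c) strict negative-definiteness of $\mathrm{Hess}_0 u$ ``forces'' $|\langle z-w,E(z)\rangle|\gtrsim\|z'-w'\|^2$; but this is precisely the content you need to prove, and you never do. Moreover, as set up, the split is not clean: not-(a) means either $|z_1-w_1|=o(\|z-w\|^2)$ \emph{or} the error term is not dominated, and in the second subcase $z_1-w_1$ may well cancel $\langle z-w,E(z)\rangle$; you would have to rule out such cancellation, which requires controlling phases, not just magnitudes. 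Your own remark that the proposed mechanism fails for the non-strictly-convex example $\{\re(z_1-z_2^2)+|z_2|^2<0\}$ shows you see the issue, but identifying the hard step is not a proof of it. The paper avoids this entirely by a different normalization: for each $z$, it rotates and translates so that $\pi_\O(z)$ is the \emph{origin} and the normal there is $\pm e_1$, and writes the boundary as $x_1>y_1g(y_1,u')+f(u')$; then $(z-w)_z$ becomes literally $\mp(z_1-w_1)$ with no error term, $\delta_\O(z)$ is exactly the first coordinate of $z$, and the membership constraint $x_1>y_1g+f$ directly feeds a quantitative lower bound on $h_\O(z,w)$. In fact the paper proves the sharper Proposition~\ref{3}, giving the exact values $c_3=c_4=\lambda$ and $c_2=\min\{\lambda,\lambda^{1/2}\}$ where $\lambda$ is the minimal eigenvalue of the tangential Hessian of $\tilde\delta_\O$, and Proposition~\ref{1} follows from those.

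Two smaller points. In $(3)\Rightarrow(1)$, the ``null direction'' Taylor bound should be $u(0,tv')=o(t^2)$, not $O(t^3)$, since only $\mathcal C^2$-smoothness is available; and the fixed choice $w_{n,1}=-t_n^{5/2}$ is not guaranteed to put $w_n$ inside $\O$ (the $o(t^2)$ term could be negative and decay slower than $t^{5/2}$) — instead take, e.g., $w_{n,1}=2\min\{0,u(0,t_nv')\}$, which both lies in $\O$ and gives $|w_{n,1}|=o(\|w_n\|^2)$. For comparison, the paper's $(3)\Rightarrow(1)$ proceeds directly (not contrapositively): condition (3), passed to the boundary, gives $|f(u')|\ge c_3'\|u'\|^2$, so $f$ is nonvanishing and hence of constant sign on the connected set $U'\setminus\{0\}$; if $f<0$ somewhere then $\{0\}\times(U'\setminus\{0\})\subset\O$ and the ratio in (3) vanishes, a contradiction, so $f>0$ and (1) follows. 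Finally, $(2)\Rightarrow(3)$ is correct in spirit, but the paper simply records it as $c_2\le c_3$; to justify that, for a minimizing sequence $w_n\to p$, pick $z_n$ on the inner normal at $p$ with $\delta_\O(z_n)=o(\|p-w_n\|^2)$, which makes $h_\O(z_n,w_n)/\|z_n-w_n\|^2\to c_3$.
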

	
	An immediate consequence of Proposition~\ref{1} is that $c_2=c_3=0$ if $\O$ is not strictly linearly convex at $p$. Also, if $\O$ is strictly linearly convex, then $\|(p-z)_p\|\neq 0$
	for any $p\in\partial \O,$ $z\in\overline \O,$ $z\neq p.$ A standard compactness argument yields:
	\begin{cor}\label{2} A bounded domain $\O$ in $\mathbb C^N$ with $\mathcal C^2$-smooth boundary
		is strictly linearly convex if and only if there exists $c>0$ such that
		$$h_\O(z,w)\ge c\|z-w\|^2,\quad z,w\in \O,\ z\mbox{ near }\partial \O.$$
	\end{cor}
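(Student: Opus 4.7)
The plan is to derive both implications of the corollary from Proposition~\ref{1} by a compactness argument on $\partial\O$.

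For the reverse implication, suppose $h_\O(z,w)\ge c\|z-w\|^2$ uniformly for $z$ near $\partial\O$. Fixing any $p\in\partial\O$ and letting $\O\ni z,w\to p$ yields $c_2\ge c>0$, so Proposition~\ref{1}(2)$\Rightarrow$(1) gives strict linear convexity at $p$. An analogous argument with sequences $z_n\to p$ and $w_n\to q\in\overline\O\setminus\{p\}$ lying in $p+T_p^{\C}(\partial\O)$ would force $h_\O(z_n,w_n)\to 0$ against $\|z_n-w_n\|^2$ bounded below, which recovers the global rigidity of the affine complex tangent hyperplane as well.

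For the forward implication, Proposition~\ref{1}(1)$\Rightarrow$(2) applied at each $p\in\partial\O$ gives an open neighborhood $V_p$ of $p$ and a constant $c_p>0$ such that $h_\O(z,w)\ge c_p\|z-w\|^2$ on $\O\cap V_p$. A standard Lebesgue-number argument applied to a finite subcover of the compact set $\partial\O$ yields $\epsilon_0,c_0>0$ such that the estimate holds whenever $\delta_\O(z)\le\epsilon_0$ and $\|z-w\|\le\epsilon_0$.

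The main obstacle is the remaining case $\delta_\O(z)\le\epsilon_0$ but $\|z-w\|\ge\epsilon_0$, which I handle by contradiction. Assuming no uniform constant works, I extract sequences $z_n,w_n\in\O$ with $z_n\to p\in\partial\O$, $w_n\to q\in\overline\O$, $\|p-q\|\ge\epsilon_0$, and $h_\O(z_n,w_n)/\|z_n-w_n\|^2\to 0$. Since $\|z_n-w_n\|$ is bounded above and below, this forces $h_\O(z_n,w_n)\to 0$, and in particular $\|(z_n-w_n)_{z_n}\|\to 0$. Continuity of the outer unit normal together with $\pi_\O(z_n)\to p$ then give $(p-q)_p=0$, i.e.\ $q-p\in T_p^{\C}(\partial\O)$. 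To reach a contradiction I invoke the global consequence of strict linear convexity of $\O$, namely that $(p+T_p^{\C}(\partial\O))\cap\overline\O=\{p\}$; this forces $q=p$, contradicting $\|p-q\|\ge\epsilon_0$. The subtle ingredient is precisely this global tangent-plane property, which goes beyond the purely local Hessian condition.
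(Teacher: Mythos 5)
Your proposal is correct and reconstructs exactly what the paper's ``standard compactness argument'' is meant to be: local lower bounds from Proposition~\ref{1}(1)$\Rightarrow$(2) patched over $\partial\O$ by compactness for the case $\|z-w\|$ small, plus the globally separating complex tangent hyperplane (which the paper states just before the corollary as $\|(p-z)_p\|\neq 0$ for $z\in\overline\O\setminus\{p\}$) to rule out the case $\|z-w\|$ bounded below. You also correctly identify this last global property, rather than the pointwise Hessian condition alone, as the non-trivial ingredient.
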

	
	Set $$\ds c_4=\liminf_{\partial \O\ni w\to p}\frac{\|(p-w)_p\|}{\|p-w\|^2},$$ $G=\C^N\setminus\overline \O$ and let $\lambda=\lambda_{\O,p}$ denote the minimal eigenvalue of the restriction of the Hessian of $\tilde\delta_\O$ to $T_p^{\mathbb C}(\partial\O)$. Clearly, $\O$ is strictly linearly convex at $p$ if and only if $\lambda>0$.
	Proposition \ref{1} is a consequence of the following:
	\begin{prop}\label{3} $\:$
		\begin{enumerate}
			
			\item  If $\O$ is strictly linearly convex at $p$, then
			\begin{enumerate}
				\item $c_3=c_4=\lambda$,
				\item  $c_2=\min\{\lambda,\lambda^{1/2}\}.$
			\end{enumerate}
			\item If neither $\O$ nor $G$ is
			strictly linearly convex at $p,$ then $c_4=0.$
		\end{enumerate}
	\end{prop}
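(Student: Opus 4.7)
The plan is to work in local coordinates at $p$. Translating and rotating so that $p = 0$ and $\e_p = e_1$, the Taylor expansion reads
$$\tilde\d_\O(z) = \re(z_1) + Q(z) + o(\|z\|^2),$$
where $Q$ is the quadratic form appearing as the second-order coefficient. A basic property of signed distance functions is that $Q(e_1, \cdot) = 0$, so $Q(z)$ depends only on the projection of $z$ onto $T_0(\bd\O) = \{\re(z_1) = 0\}$; the minimum of $Q$ on the unit sphere of $T_0^{\C}(\bd\O) = \{0\}\times\C^{N-1}$ is the $\lambda$ of the statement.

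I would address (2) first. Since $\tilde\d_G = -\tilde\d_\O$, the hypothesis that neither $\O$ nor $G$ is strictly linearly convex at $p$ forces $Q|_{T_0^{\C}(\bd\O)}$ to be neither positive nor negative definite; by continuity of $Q$ on the unit sphere of $T_0^{\C}(\bd\O)$ there is a unit $v^*$ with $Q(v^*) = 0$. Parametrize $\bd\O$ near $0$ by $(\tau, y') \in \mathbb R \times \C^{N-1}$ with $\tau = \im(w_1)$: the defining equation yields $\re(w_1) = -Q(i\tau e_1 + (0, y')) + o(\tau^2 + \|y'\|^2)$, so setting $\tau = 0$ and $y' = tv^*$ produces boundary points $w_t$ with $\|(p - w_t)_p\| = |w_t^1| = o(t^2)$ while $\|p - w_t\|^2 \sim t^2$, giving $c_4 = 0$.

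For (1)(a), the same construction with $v^*$ now a unit $Q$-minimizer gives $|w_t^1| = \lambda t^2 + o(t^2)$ and hence $c_4 \le \lambda$. For the lower bound, take an arbitrary $w_n \to p$ in $\ov\O$ and write $w_n = (x_1^n + i\tau_n, y_n')$; if $|\tau_n|/\|w_n\|^2$ is unbounded the ratio diverges along that subsequence, and otherwise $\tau_n^2 = o(\|w_n\|^2)$, so $\|w_n\|^2 \sim \|y_n'\|^2$ and the tangential part of $w_n$ tends in direction into $T_0^{\C}(\bd\O)$. The inequality $\tilde\d_\O(w_n) \le 0$ then forces $|x_1^n| \ge Q(w_n) - o(\|w_n\|^2) \ge \lambda \|y_n'\|^2 - o(\|w_n\|^2)$, yielding $|w_1^n|/\|w_n\|^2 \ge \lambda - o(1)$. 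The inequality $c_3 \le c_4$ is obtained by perturbing boundary sequences slightly inward.

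For (1)(b), two test families give the upper bound. The bound $c_2 \le \lambda^{1/2}$ uses $z_n = -s_n e_1$ on the inner normal and $w_n = z_n + \gamma_n v^*$ with $v^* \in T_0^{\C}(\bd\O)$ a unit $Q$-minimizer; the constraint $w_n \in \O$ forces $s_n \ge \lambda \gamma_n^2 + o(\gamma_n^2)$, and near-equality yields $(z_n - w_n)_{z_n} = 0$ together with $h_\O(z_n, w_n)/\|z_n - w_n\|^2 = \d_\O^{1/2}(z_n)/\|z_n - w_n\| \to \lambda^{1/2}$. The bound $c_2 \le \lambda$ uses $w_n = -s_n e_1$ together with $z_n = w_n + (-Q(\gamma_n v^*), 0, \gamma_n v^*)$, so that $\d_\O(z_n) \approx s_n$; the outward normal $\e_{\pi_\O(z_n)}$ tilts tangentially proportionally to $\gamma_n v^*$, and the first coordinate of $z_n - w_n$ partially cancels the resulting pairing to leave $\|(z_n - w_n)_{z_n}\| \sim \lambda \|z_n - w_n\|^2$, with $s_n = o(\gamma_n^2)$ making the $\d_\O^{1/2}$ term negligible and giving ratio $\to \lambda$. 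For the matching lower bound, applying the Taylor expansion of $\tilde\d_\O$ around $z_n$ and using $\tilde\d_\O(w_n) \le 0$ yields
$$\d_\O(z_n) + \|(z_n - w_n)_{z_n}\| \ge Q(z_n - w_n) - o(\|z_n - w_n\|^2),$$
and ruling out sequences with non-negligible complex normal component of $z_n - w_n$ at $p$ (which force $\|(z - w)_z\|/\|z - w\|^2$ to diverge) leaves $Q(z_n - w_n) \ge \lambda \|z_n - w_n\|^2 - o(\|z_n-w_n\|^2)$. Normalizing $\alpha = \|(z_n - w_n)_{z_n}\|/\|z_n - w_n\|^2$ and $\beta = \d_\O(z_n)/\|z_n - w_n\|^2$ turns the constraint into $\alpha + \beta \ge \lambda - o(1)$, and an elementary minimization of $\alpha + \sqrt\beta$ under this gives infimum $\min(\lambda, \lambda^{1/2})$, attained at $(\lambda, 0)$ or $(0, \lambda)$ matching the two test families. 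The main obstacle will be controlling the $o$-errors from the Hessian at the moving point $z_n$ and verifying that the dichotomy above matches the exact constants $\lambda$ and $\lambda^{1/2}$ across all configurations of $z_n - w_n$.
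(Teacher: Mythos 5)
Your proposal is correct and follows essentially the same path as the paper's proof, with some differences in bookkeeping worth noting. The paper works directly with the boundary-graph representation $\O\cap U=\{x_1>y_1\,g(y_1,u')+f(u')\}$ and $f(u')\ge t\lambda\|u'\|^2$ for $t<1$, which packages your Taylor expansion of $\tilde\delta_\O$ and the Hessian-continuity error control into a single uniform statement; your approach via the expansion $\tilde\delta_\O=\re z_1+Q(z)+o(\|z\|^2)$ at the moving point $z_n$ is the same information, and your flagged ``main obstacle'' of controlling the error from the Hessian at $z_n$ is routine for a $\mathcal C^2$ boundary (uniform modulus of continuity of $D^2\tilde\delta_\O$ near $p$), precisely as the paper absorbs it into the $t\to 1$ limit. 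Two genuine tactical variations: for the lower bound on $c_2$ the paper performs a case analysis (splitting on whether $\delta_\O(z)$ is small or large compared with $|w'|^2$, and then on whether it beats $\lambda|w'|^2$), whereas you reduce to the clean scalar optimization $\min\{\alpha+\sqrt\beta:\alpha+\beta\ge\lambda,\ \alpha,\beta\ge 0\}=\min\{\lambda,\lambda^{1/2}\}$; this is a more transparent way to see where the two endpoints come from, and it buys you a structural explanation of why the answer is a minimum of the two. Second, for the inequality $c_2\le\lambda$ you construct a second explicit test family (tilted so the normal at $\pi_\O(z_n)$ pairs with the tangential displacement to produce $\|(z_n-w_n)_{z_n}\|\sim\lambda\|z_n-w_n\|^2$ with $\delta_\O(z_n)$ negligible); the paper instead gets $c_2\le\lambda$ for free from the trivial inequality $c_2\le c_3$ together with $c_3=\lambda$, so your extra construction is correct but unnecessary. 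One small point you should make explicit in the $c_3$ lower bound: besides discarding subsequences with $|\tau_n|/\|w_n\|^2$ unbounded, you also need to discard those with $|x_1^n|\gtrsim\|w_n\|$ (along which the ratio also diverges); only then does $\|w_n\|^2\sim\|y_n'\|^2$ follow. This is the same elementary trichotomy the paper encodes by taking $z=(x,0')$, $w=(x_1+iy_1,w')$ and only keeping the regime where $|w'|$ dominates.
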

	
	Observe that Proposition \ref{3} shows that $c_4=\lambda_{G,p}$ if $G$ is strictly
	linearly convex at $p$.
	
	\begin{proof}[Proof of Proposition \ref{1}]
		
		$(1)\Rightarrow(2)$ follows from the first part of Proposition \ref{3}.
		
		$(2)\Rightarrow(3)$ follows from the fact that $c_2\le c_3.$
		
		$(3)\Rightarrow(1)$: We may assume that $p=0$ and
		\begin{equation}\label{u}
			\O\cap U=\{u=(x_1+iy_1,u')\in U:x_1>y_1g(y_1,u')+f(u')\}
		\end{equation}
		for some polydisc $U=U_1\times U'$ centered at $0,$ where $g(0)=0$ and
		$\mbox{ord}_0f\ge 2.$
		Let $u'\in U'.$ Then $(f(u'),u')\in\partial \O.$ Let $c_3'\in (0,c_3).$
		Shrink $U'$ if necessary to get to get $|f(u')|\ge c_3'\|u'\|^2$ on $U'$.
		Assume that $\O$ is not strictly linearly convex at $p$, then $f(u'_0)<0$ for some $u'_0\in U'_\ast$, where $U'_\ast=U'\setminus\{0'\}$. This implies that $f<0$ on ${U'_\ast}$ so $\{0\}\times U'_\ast\subset\O$. Hence $0=c_3=c_3'=0$,
		a contradiction. Thus $\O$ is strictly linearly convex
		at $p=0$ and $f(u')\ge c_3'|u'|^2$. \end{proof}
	\begin{proof}[Proof of Proposition \ref{3}]
		
		Let us start with (2). We may assume \eqref{u} holds.
		Then there exists a unit vector $v'\in\mathbb C^{N-1}$
		such that $$\lim_{\epsilon\to 0}\frac{f(\epsilon v')}{\epsilon^2}=0.$$ Set
		$w_\epsilon=(f(\epsilon v'),\epsilon v').$ Then $w_\epsilon\in\partial \O$ for $\epsilon\in(0,\epsilon_0)$ and
		$$\lim_{\epsilon\to 0}\frac{\|{(w_\epsilon)_0}\|}{\|w_\epsilon\|^2}=0.$$
		
		Now we shall prove (1)(a). It follows from the proof of Proposition \ref{1} that $c_4\le\lambda.$
		Since $c_3\le c_4,$ it remains to show that $\lambda\le c_3.$ Fix $t\in(0,1).$ Shrinking $U$, if necessary, we may assume that \eqref{u} holds,
		$f(u')\ge t\lambda|u'|^2$ and $|g|<1-t.$ Setting $w=(x_1+iy_1,w')$ we see that
		$$|w_1|>t|x_1|+(1-t)|y_1|\ge t^2\lambda|w'|^2.$$
		Then $w\in\O$ and $c_3\ge t^2\lambda$. Letting $t\to 1$ gives $c_3\ge\lambda.$
		\smallskip
		
		We are left with (1)(b). We may assume that $\eqref{u}$ holds. There exists a unit vector $v'\in\C^{N-1}$
		such that $$\lim_{\epsilon\to 0}\frac{f(\epsilon v')}{\epsilon ^2}=\lambda.$$ Set $z_\epsilon=((1+\epsilon)f(\epsilon v'),0')$
		and $w_\epsilon=((1+\epsilon)f(\epsilon v'),\epsilon v').$ Then $z_\epsilon, w_\epsilon\in \O$ for $\epsilon\in(0,\epsilon_0)$ and
		$$\lim_{\epsilon\to 0}\frac{b_\O(z_\epsilon,w_\epsilon)}{\|z_\epsilon-w_\epsilon\|^2}=\lambda^{1/2}. $$
		Consequently, $c_2\le\lambda^{1/2}.$ Since $c_2\le c_3=\lambda,$ it remains to show that
		$c_2\ge\min\{\lambda,\lambda^{1/2}\}.$
		
		We shall proceed similarly as before.
		Fix $t\in(0,1).$ For any $z\in \O$ close to $p,$ we may assume
		that $\pi_\O(z)=0,$ where $\pi_\O$ is the projection onto the boundary, and that \eqref{u} holds, $f(u')=f_z(u')\ge t\lambda\|u'\|^2,$
		$g=g_z,$ $\|g\|<1-t,$ and $U=U_t$ does not depend on $z.$ Then $z=(x,0'),$
		$x>0.$ Setting $w=(x_1+iy_1,w'),$ we have that
		$$\|x_1\|+(1-t)\|y_1\|\ge t\lambda\|w'\|^2,$$
		$$h_\O(z,w)\ge t|x-x_1|+(1-t)|y_1|+\sqrt{x}|w'|.$$
		
		\noindent{\it Case I.} $t^2x_1<|w'|^2.$ Then
		$$h_\O(z,w)\ge t|x_1|+(1-t)|y_1|>t^2\lambda|w'|^2.$$
		
		\noindent{\it Case II.} $t^2x_1\ge |w'|^2.$ Then
		$$h_\O(z,w)\ge (1-t)|y_1|+\sqrt{x_1}|w'|.$$
		
		{\it II.I.} If $x_1\ge\lambda|w'|^2,$ then $h_\O(z,w)\ge\lambda^{1/2}|w'|.$
		
		{\it II.II.} If $x_1<\lambda|w'|^2,$ then $\lambda>1$ and hence
		$$h_\O(z,w)>(1-t)|y_1|+x_1/\lambda^{1/2}\ge t\lambda^{1/2}|w'|.$$
		Now we easily conclude that $c_2\ge\min\{\lambda,\lambda^{1/2}\}.$
	\end{proof}
	
	Let $v_z^{\mathbb{R}}$ denote the real normal component of the vector $v$ taken with respect to $\pi_\O(z)\in\partial\O$ and set $$h^\mathbb{R}_\O(z,w)=\|(z-w)^\mathbb{R}_z\|+\|z-w\|\delta^{1/2}_\O(z).$$ Proposition \ref{1} has the following obvious counterpart in $\mathbb{R}^N$ (with simpler proofs):
	
	\begin{prop} Let $p$ be a $\mathcal C^2$-smooth boundary point of a domain $\O$ in $\C^N.$
		The following conditions are equivalent:
		\begin{enumerate}
			\item $\O$ is strongly convex at $p$;
			\item $\displaystyle{\liminf_{\O\ni z,w\to p}
				\frac{h^\mathbb{R}_\O(z,w)}{\|z-w\|^2}>0};$
			\item $\displaystyle{\liminf_{\O\ni w\to p}\frac{\|(p-w)_p^{\mathbb{R}}\|}{\|p-w\|^2}>0}.$
		\end{enumerate}
	\end{prop}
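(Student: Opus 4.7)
The plan is to run the cycle $(1)\Rightarrow(2)\Rightarrow(3)\Rightarrow(1)$, parallel to the proof of Proposition~\ref{1}, but considerably simpler: in the real setting strong convexity forces the Hessian of $\tilde\delta_\O$ to be positive definite on the \emph{whole} real tangent plane, so the scaling-based analog of Proposition~\ref{3} is not needed. After an isometric change of coordinates I may assume $p=0$, the inward real unit normal at $p$ is $e_1$, and $\O$ is locally the epigraph $\{x_1>F(y_1,u')\}$ for some $\mathcal C^2$ function $F$ with $F(0)=0$ and $dF(0)=0$; strong convexity of $\O$ at $p$ is then equivalent to the real Hessian of $F$ at $0$ being strictly positive definite.

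For $(2)\Rightarrow(3)$ I would prove directly that $c_2\le c_3$. Given $w\in\O$ close to $p$, set $z=\epsilon e_1\in\O$ with $\epsilon=o(\|p-w\|^2)$; then $\pi_\O(z)=p$ and $\delta_\O(z)=\epsilon$, so $\|(z-w)^{\mathbb{R}}_z\|\le\|(p-w)^{\mathbb{R}}_p\|+\epsilon$ and $\sqrt{\delta_\O(z)}\,\|z-w\|=o(\|p-w\|^2)$, while $\|z-w\|^2=\|p-w\|^2(1+o(1))$. Hence $h^{\mathbb{R}}_\O(z,w)/\|z-w\|^2\le \|(p-w)^{\mathbb{R}}_p\|/\|p-w\|^2+o(1)$, which yields (3) from (2). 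The implication $(3)\Rightarrow(1)$ is handled by the contrapositive: if strong convexity fails, the Hessian of $F$ at $0$ has a null tangent direction $v$, so $F(tv)/t^2\to 0$; choosing $w_t=(\max\{0,F(tv)\}+t^3,\,tv)\in\O$ makes the first coordinate of $w_t$ equal to $o(t^2)$ while $\|p-w_t\|^2\sim t^2$, and since $\|(p-w_t)^{\mathbb{R}}_p\|$ equals exactly that first coordinate, the ratio in (3) tends to zero.

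The main implication is $(1)\Rightarrow(2)$, handled by a short case analysis. Fix $z,w\in\O$ near $p$, apply an isometry so that $\pi_\O(z)=0$ and the inward real normal at $0$ is $e_1$, and write $z=(\delta,0,\ldots,0)$ with $\delta=\delta_\O(z)$ and $w=(x+iy,u')$. Strong convexity, extended to a neighborhood of $p$ using $\mathcal C^2$-smoothness, gives $F(y,u')\ge c(y^2+\|u'\|^2)$ for a uniform $c>0$, hence $x\ge c(y^2+\|u'\|^2)$. First, if $y^2+\|u'\|^2\le\|z-w\|^2/2$, then $(\delta-x)^2\ge\|z-w\|^2/2$, so $\|(z-w)^{\mathbb{R}}_z\|=|\delta-x|\gtrsim\|z-w\|$, which dominates $\|z-w\|^2$ for $\|z-w\|$ bounded. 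Otherwise $y^2+\|u'\|^2\gtrsim\|z-w\|^2$, giving $x\gtrsim\|z-w\|^2$; in the sub-case $\delta\ge x/2$ one gets $\sqrt{\delta}\,\|z-w\|\gtrsim\|z-w\|^2$, while in the sub-case $\delta<x/2$ one has $|\delta-x|\ge x/2\gtrsim\|z-w\|^2$. Either way, $h^{\mathbb{R}}_\O(z,w)\gtrsim\|z-w\|^2$ uniformly near $p$.

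The only mild obstacle is making the constants in the $(1)\Rightarrow(2)$ case analysis uniform as the base point $z$ varies near $p$: the quadratic lower bound $F\ge c(y^2+\|u'\|^2)$ for the defining functions obtained after isometrically normalizing $\pi_\O(z)$ to the origin must hold uniformly. This is granted by the $\mathcal C^2$-continuity of $F$ in the base point together with the openness of positive definiteness of the Hessian, so strong convexity at $p$ automatically persists on a neighborhood and the obstacle dissolves.
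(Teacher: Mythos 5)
Your proof is correct. The paper itself omits the proof of this real analogue, remarking only that it is an "obvious counterpart\dots with simpler proofs" of Proposition~\ref{1}, so there is nothing explicit to compare against; what you have written is a complete, self-contained argument that fills that gap. Structurally you follow the same cycle $(1)\Rightarrow(2)\Rightarrow(3)\Rightarrow(1)$ as the paper does for Proposition~\ref{1}, but you sidestep the machinery of Proposition~\ref{3}: instead of computing $c_2,c_3,c_4$ exactly in terms of the minimal eigenvalue $\lambda$, you prove $(1)\Rightarrow(2)$ directly by a short case analysis (small tangential displacement vs.\ large, then $\delta\gtrless x/2$), and this is precisely the "simpler proof" the authors allude to, since no sharp constants are needed. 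Your $(2)\Rightarrow(3)$ step, choosing $z=\epsilon e_1$ with $\epsilon=o(\|p-w\|^2)$, is a clean way of establishing $c_2\le c_3$, which the paper's complex-case proof merely asserts as a fact. Your $(3)\Rightarrow(1)$ argument is slightly different in flavor from the paper's (you exhibit an explicit sequence $w_t$ along a Hessian-null tangent direction making the ratio vanish, rather than arguing by sign of the graph function on a punctured neighborhood), but both are valid and essentially equivalent in the real setting. The uniformity point you flag at the end is handled correctly: $\mathcal C^2$-smoothness of $\partial\O$ makes the second fundamental form, and hence the quadratic lower bound $F_z\ge c(y^2+\|u'\|^2)$, locally uniform in the foot point $\pi_\O(z)$, and openness of positive definiteness gives a uniform $c>0$ near $p$.
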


	\medskip
	
	\noindent
	
	\textbf{Acknowledgements.} The authors express their gratitude to Pascal J. Thomas for valuable conversations and comments which led to the improvement of this note.
	
	The first named author was supported by the grants SONATA BIS no.
	2017/26/E/ST1/ 00723 and Sheng no. 2023/48/Q/ST1/00048 of the National Science Center, Poland. The second named author was partially supported by the
	Bulgarian National Science Fund, Ministry of Education and Science of Bulgaria
	under contract KP-06-N52/3. The third author received support from the University Research School EUR-MINT (State support managed
	by the National Research Agency for Future Investments program bearing the reference ANR-18-EURE-0023). The second and third-named authors enjoy the support of
	the PHC Rila program 48135TJ, which made possible the stay of the third author at the Institute for
	Mathematics and Informatics of the Bulgarian Academy of Sciences, Sofia, during
	which this work was started.
	Part of this work was done during the visit of the third named author in Jagiellonian University, Krakow which was possible due to the aforementioned grants of the first and the third named authors. This paper forms a part of the Ph.D. work of the third named author, under the
	direction of P. J. Thomas and N. Nikolov, in Toulouse I. of M. The third
	named author is currently partially supported by the MIUR Excellence Department
	Project 2023-2027 MatMod@Tov awarded to the Department of Mathematics, University of Rome Tor Vergata, by PRIN Real and Complex Manifolds: Topology, Geometry
	and holomorphic dynamics n.2017JZ2SW5 and by GNSAGA of INdAM.
	
	{}
	
\end{document}